\documentclass[11pt, reqno]{amsart}
\usepackage{amscd}
\usepackage{amsmath}
\usepackage{amssymb}
\usepackage{latexsym}
\usepackage{amsthm}
\usepackage{graphicx}
\usepackage[all]{xy}       % Include XY-pic

    \SelectTips{cm}{10}     % Use the nicer arrowheads

    \everyxy={<2.5em,0em>:} % Sets the scale I like

    \xyoption{web}          % Include the lattice feature, I don't know why%

                            % it isn't loaded already

\usepackage{hyperref}

\setlength{\textwidth}{15.5cm} \setlength{\textheight}{20cm}
\setlength{\oddsidemargin}{0.0cm} \setlength{\evensidemargin}{0.0cm}
\setlength{\unitlength}{0.46em}

\newcommand{\arxiv}[1]{\href{http://arxiv.org/abs/#1}{\tt arXiv:\nolinkurl{#1}}}
\newcommand{\arXiv}[1]{\href{http://arxiv.org/abs/#1}{\tt arXiv:\nolinkurl{#1}}}

\newtheorem{theorem}{Theorem}[section]
\newtheorem{lemma}[theorem]{Lemma}

\newtheorem{proposition}[theorem]{Proposition}
\newtheorem{corollary}[theorem]{Corollary}

\theoremstyle{remark}
\newtheorem{remark}[theorem]{Remark}

\numberwithin{equation}{section}

\def\ii{{\bf i}}
\def\j{{\bf j}}

\def\N{\mathbb{N}}
\def\Q{\mathbb{Q}}

\def\Z{\mathbb{Z}}

\def\A{\mathcal{A}}

\def\uj{\bf j}

\def\oo{\mathcal{O}}

 % notation for the projection operator
 % the notation for the long word

\def\n{\mathfrak {n}}

\def\a{\alpha}
\def\b{\beta}

\def\la{\lambda}

\def\ga{\gamma}

\def\f{\mathbf{f}}

\newcommand{\isomto}{\overset{\sim}{\rightarrow}}

 % notation for the sign function, but I'll want to
\def\Hom{\,\mbox{Hom}}
\def\dimq{\dim_q}
\def\Ind{\operatorname{Ind}}
\def\Coind{\operatorname{CoInd}}
\def\seq{\,\mbox{Seq}\,}
\def\ch{\,\mbox{ch}}

\def\Seq{\,\mbox{Seq}\,}

\def\Res{\operatorname{Res}}

\def\Ext{\operatorname{Ext}}

\def\mods{\mbox{-mod}}
\def\fmod{\mbox{-fmod}}
\def\prmod{\mbox{-pmod}}

\newcommand{\map}[2]{\,{:}\,#1\!\longrightarrow\!#2}

\def\kpf{\operatorname{kpf}} % k

 % the Hecke algebra
 % the Hecke algebra
 % the uniformiser

\def\ra{\rightarrow}

 % restriction of scalars

 % left quotient

\numberwithin{equation}{section}

\title[KLR algebras]{Finite Dimensional Representations of Khovanov-Lauda-Rouquier algebras I: Finite Type}
\address{}\email{petermc@math.stanford.edu}
\author{Peter J McNamara}
\date{\today}

\begin{document}

\begin{abstract}We classify simple representations of Khovanov-Lauda-Rouquier algebras in finite type. The classification is in terms of a standard family of representations that is shown to yield the dual PBW basis in the Grothendieck group. Finally, we describe the global dimension of these algebras.
\end{abstract}
\maketitle

\section{Introduction} 

Let $\mathfrak{g}$ be a complex semisimple Lie algebra. Recently, some categorifications of the upper-triangular part of the corresponding quantum group have appeared in terms of the module categories of certain families of algebras, now known as Khovanov-Lauda-Rouquier algebras (which will henceforth be referred to as KLR algebras). These algebras were introduced independently by Khovanov and Lauda \cite{khovanovlauda,klII} and Rouquier \cite{rouquier}. In this paper we restrict our attention to KLR algebras arising from Cartan data of finite type.

It is then a natural question to ask for a classification of all simple representations of these algebras. 
There are various answers to this question dating back to \cite{khovanovlauda}. Let us focus our attention on the classification in terms of Lyndon words due to
Kleshchev and Ram \cite{kleshchevram} and Hill, Melvin and Mondragon \cite{hmm}, where a family of standard modules are constructed from which the irreducibles appear as their heads. In this story, a choice is made which yields a convex ordering on the set of positive roots and it is a natural question to ask if a similar result holds for an arbitrary convex order.

It is answering this question which is the primary focus of this paper. Our main theorem is Theorem \ref{main}. For each choice of convex ordering, we produce a family of KLR-modules which categorify the dual PBW basis, and for which the simple representations appear as their heads. As pointed out by Kato \cite[Theorem 4.16]{kato}, in symmetric type this implies that the canonical basis has a positive expression in terms of any PBW basis, answering a question of Lusztig. In non-symmetric type, we instead get the result that the basis arising from this KLR categorification has a positive expression in terms of any PBW basis.

Our description of the cuspidal representations necessary to kickstart this process is perhaps best described as non-constructive, though an analysis of the proofs show that it is also
possible to obtain them via an inductive process. Unfortunately we do not have reflection functors categorifying Lusztig's automorphisms $T_i$ for representations of KLR algebras outside of types ADE in characteristic zero, where they were recently introduced by Kato \cite{kato} using geometric techniques. Possible future access to reflection functors should provide a more direct approach towards constructing the cuspidal representations.

%We produce a classification of the simple representation which depends on the choice of a reduced decomposition of the long element in the corresponding Weyl group, or equivalently a choice of convex ordering on the set of positive roots. The classification in the aforementioned works corresponds to the special case where the convex ordering arises from the combinatorics of Lyndon words.

The various classifications that arise for each choice of convex ordering are related to each other via the combinatorics of Mirkovic-Vilonen polytopes. We will not discuss this connection in this paper - this theory is developed in the paper of Tingley and Webster \cite{tingleywebster}.

%In our classification, we construct a family of standard modules which have the simple modules as their irreducible heads. Under the isomorphism between the Grothendieck group of the category of finite dimensional representations of KLR algebras and the dual of the upper part of the quantised enveloping algebra, we are able to show that the standard modules so constructed get sent to the dual PBW basis constructed from the same reduced decomposition. 

Our other main result is a computation of the global dimension of a KLR algebra. In particular we prove Theorem \ref{finitecd} which states that the global dimension of the KLR algebra $R(\nu)$ is equal to the height of $\nu$. This generalises a result of Kato \cite[Theorem A]{kato}, who proves finiteness of global dimension for finite type simply-laced KLR algebras. %Kato's approach is geometric, which yields results currently unavailable via the algebraic approach such as the construction of functors categorifying the Saito reflection on the crystal.

We would like to acknowledge beneficial conversations with J. Brundan, D. Bump, J. Hartwig, A. Licata, T. Nevins, A. Pang, A. Ram and P. Tingley.

%%%%%%%%%%%%%%%%%%%%%%%%%%%%%%%%%%%%%%%%%%%%%%
\section{Preliminaries}%%%%%%%%%%%%%%%%%%%%%%%
%%%%%%%%%%%%%%%%%%%%%%%%%%%%%%%%%%%%%%%%%%%%%%

\subsection{The Root System}

For our purposes, a Cartan datum shall consist of a finite set $I$ and a symmetric function $I\times I\to \Z$, $(i,j)\mapsto i\cdot j$ such that $i\cdot i$ is a positive even integer and $2\frac{i\cdot j}{i\cdot i}$ is a non-negative integer for any pair of distinct elements $i,j\in I$. We will extend $\cdot$ by linearity to allow for arbitrary $\Z$-linear formal combinations of elements of $I$ as arguments.

In this paper, we shall be concerned only with the case where the Cartan datum $(I,\cdot)$ is of finite type. By definition this means that the symmetric matrix $(i\cdot j)_{ij}$ is positive definite.

Associated to $(I,\cdot)$ is a root system in which the root lattice is equal to $\Z I$, the simple roots are of the form $i$ (and we will denote these $\a_i$), and the positive roots $\Phi^+$ are the roots contained in $\N I$. 

Let $W$ be the Weyl group, generated by simple reflections $\{s_i\}_{i\in I}$. Since we are assuming our Cartan datum is of finite type, $W$ is finite. Let $w_0$ be the longest element in $W$ and let $w_0=s_{i_1}s_{i_2}\cdots s_{i_N}$ be a reduced decomposition of $w_0$.

By \cite[Ch VI, \S 6, Cor 2]{bourbaki}, defining $\a_j=s_{i_N}\ldots s_{i_{j+1}} \a_{i_j}$ for each $j=1,2,\ldots ,N$ is an enumeration of the positive roots. We define a total order on $\Phi^+$ by $\a_1<\cdots < \a_N$. 

This ordering is \emph{convex} 
in the sense that the following lemma holds.

\begin{lemma}\label{convex}
 Let $\b$ be a positive root such that $\b=\sum_{i=k}^l c_i \a_i$ with each $c_i$ a nonnegative real number. Then $\a_k\leq\b\leq\a_l$ in the total ordering on $\Phi^+$.
\end{lemma}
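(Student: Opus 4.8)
The plan is to deduce the lemma from a description of which positive roots the partial products of the chosen reduced word send to negative roots. For $0\le j\le N$ put $w_{>j}:=s_{i_{j+1}}s_{i_{j+2}}\cdots s_{i_N}$, a terminal segment of the reduced expression for $w_0$ (so that $w_{>N}=1$ and $w_{>0}=w_0$). I would first establish the key claim that, for every $1\le p\le N$,
\[ w_{>j}(\alpha_p)\in\Phi^+\ \text{ if }\ p\le j,\qquad w_{>j}(\alpha_p)\in-\Phi^+\ \text{ if }\ p>j. \]
Granting this, the lemma follows quickly. Since $\{\alpha_1,\dots,\alpha_N\}=\Phi^+$, write $\beta=\alpha_m$ for the appropriate index $m$. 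If $\beta>\alpha_l$, i.e.\ $m>l$, apply $w_{>l}$ to the relation $\beta=\sum_{i=k}^{l}c_i\alpha_i$: the left side becomes the negative root $w_{>l}(\alpha_m)$, while every $w_{>l}(\alpha_i)$ with $k\le i\le l$ is a positive root, so the right side is a nonnegative real combination of elements of $\N I$ and is nonzero (as $\beta\ne0$); an element of $\R_{\ge 0}I$ with a positive coordinate cannot lie in $-\N I$, a contradiction. Hence $\beta\le\alpha_l$. Symmetrically, if $\beta<\alpha_k$, i.e.\ $m<k$, then applying $w_{>k-1}$ sends every $\alpha_i$ with $k\le i\le l$ to a negative root while keeping $\alpha_m$ positive, producing the same contradiction; hence $\alpha_k\le\beta$.

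For the key claim I would compute $w_{>j}(\alpha_p)$ directly from the defining formula $\alpha_p=s_{i_N}s_{i_{N-1}}\cdots s_{i_{p+1}}(\alpha_{i_p})$. Multiplying out $w_{>j}\cdot(s_{i_N}\cdots s_{i_{p+1}})$ and cancelling telescopically, one is left with $w_{>j}(\alpha_p)=s_{i_{j+1}}s_{i_{j+2}}\cdots s_{i_p}(\alpha_{i_p})$ when $p>j$, and with $w_{>j}(\alpha_p)=s_{i_j}s_{i_{j-1}}\cdots s_{i_{p+1}}(\alpha_{i_p})$ when $p\le j$. In the first case $s_{i_{j+1}}\cdots s_{i_p}$ is a consecutive subword of the fixed reduced expression for $w_0$, hence reduced, so appending $s_{i_p}$ to the (also reduced) word $s_{i_{j+1}}\cdots s_{i_{p-1}}$ strictly increases length; by the standard criterion $\ell(ws_i)>\ell(w)\iff w(\alpha_i)\in\Phi^+$ we get $s_{i_{j+1}}\cdots s_{i_{p-1}}(\alpha_{i_p})\in\Phi^+$, whence $w_{>j}(\alpha_p)=-s_{i_{j+1}}\cdots s_{i_{p-1}}(\alpha_{i_p})\in-\Phi^+$. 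In the second case $s_{i_p}s_{i_{p+1}}\cdots s_{i_j}$ is a consecutive subword of $w_0$, so its reverse $s_{i_j}s_{i_{j-1}}\cdots s_{i_p}$ is reduced too, length again goes up upon appending $s_{i_p}$ to $s_{i_j}\cdots s_{i_{p+1}}$, and the same criterion gives $w_{>j}(\alpha_p)=s_{i_j}\cdots s_{i_{p+1}}(\alpha_{i_p})\in\Phi^+$. The extreme cases $p=j$ and $p=j+1$ are immediate.

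The main point requiring care—really the only place the argument could go wrong—is the telescoping bookkeeping together with the check that the two surviving words are genuinely subwords, or reverses of subwords, of the fixed reduced expression for $w_0$ (so that the length criterion applies); this is where an off-by-one slip is most likely, but there is no genuine obstacle. Everything else (replacing a root by its negative, and the decomposition $\Phi=\Phi^+\sqcup(-\Phi^+)$ with $\Phi^+\subseteq\N I$) is formal.
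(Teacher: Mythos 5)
Your proof is correct and follows essentially the same route as the paper: both apply a terminal segment of the fixed reduced word (your $w_{>k-1}$, resp.\ $w_{>l}$; the paper's $w=s_{i_k}\cdots s_{i_N}$) and use the fact that the positive roots such a segment sends to negative roots are exactly those $\geq\a_k$. The only difference is that the paper cites this inversion-set description from Bourbaki, whereas you reprove it by telescoping against $\a_p=s_{i_N}\cdots s_{i_{p+1}}\a_{i_p}$ and the length criterion $\ell(ws_i)>\ell(w)\iff w(\a_i)\in\Phi^+$, and your bookkeeping there is accurate.
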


\begin{proof}
 Let $w=s_{i_k}\cdots s_{i_N}$. By \cite[Ch VI, \S 6, Cor 2]{bourbaki} the set of positive roots $\a$ for which $w\a$ is negative is equal to the set of positive roots that are greater than or equal to $\a_k$. As $\b$ is a nonnegative linear combination of such roots, $w\b$ must also be negative, implying $\b\geq \a_k$. The other inequality is proved similarly.
\end{proof}

It is possible to show that any total ordering on $\Phi^+$ which satisfies this convexity property arises from the above construction, though we will not need this fact.

% Furthermore, if $w=s_{i_k}\cdots s_{i_N}$, then by \cite[Ch VI, \S 6, Cor 2]{bourbaki} the set of positive roots $\a$ for which $w\a$ is negative is equal to the set of positive roots that are greater than or equal to $\a_k$. Thus the sets $\{\b\in \Phi^+\mid \b \geq \a\}$ are closed under taking positive linear combinations (and similarly with $\geq$ replaced by $>$, $\leq$ or $<$). This fact will at times be used without comment.

% \begin{theorem}
%  Let $\a_1<\a_2<\cdots < \a_N$ be a finite totally ordered biconvex set of positive real roots.
% Then we can find $i_1,\ldots,i_N$ such that $\a_k=s_{i_1}\cdots s_{i_{k-1}}\a_{i_k}$ for all $k$.
% \end{theorem}
% 
% \begin{proof}
% 
% By inductive hypothesis, we may assume that $\a_i=s_{i_1}\cdots s_{i_{j-1}} \a_{i_j}$ for $j\leq k$.
% Let $w=s_{i_1}\cdots s_{i_k}$. We seek to show that $w^{-1}\a_{k+1}$ is a simple root.
% Since $\Phi_w$ is known, we know that $w^{-1}\a_{k+1}$ is positive. If it is not simple, then it may be written
% as the sum of two positive roots $w^{-1}\a_{k+1} = \beta + \gamma$.
% 
% Without loss of generality, assume that $w\beta\in \Phi^+$ and $w\gamma\in \Phi^-$.
% Since $w\beta\notin \Phi_w$, we know $w\beta>\a_{k+1}$. By convexity this implies that $-w\gamma >w\beta$ and thus
% $-w\gamma \notin \Phi_w$. But this is a contradiction.
% \end{proof}
Throughout this paper, we will work with a fixed choice of convex ordering on $\Phi^+$, which our theorems and constructions will implicitly depend on.

Let $\a$ be a positive root that is not simple. Define a \emph{minimal pair} for $\a$ to be a pair $(\b,\ga)$ of positive roots such that $\a=\b+\ga$, $\b<\a<\ga$ and there is no pair of positive roots $\b'$, $\ga'$ with $\a=\b'+\ga'$ and $\b<\b'<\a<\ga'<\ga$.

For $\nu\in\N I$, let $\kpf(\nu)$ be the number of ways to write $\nu$ as a sum of positive roots. This is the Kostant partition function, known to equal the dimension of the $\nu$-weight space of both the enveloping algebra $U(\n)$ and its quantum analogue, where $\n$ is the nilpotent radical of a Borel subalgebra of $\mathfrak{g}$.

\begin{lemma}\label{roots}
 Let $\a$, $\delta_1,\ldots,\delta_n$ be positive roots with $n>1$ and $\a=\sum_{i=1}^n \delta_i$. Then there exists a nonempty proper subset $S\subset [n]$ such that $\sum_{s\in S}\delta_s$ and $\sum_{s\notin S} \delta_s$ are both roots.
\end{lemma}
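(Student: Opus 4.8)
The plan is to single out one index $i$ for which $\a-\delta_i$ is again a positive root; the lemma then follows immediately by taking $S=[n]\setminus\{i\}$. Indeed, for such an $i$ one has $\sum_{s\in S}\delta_s=\a-\delta_i\in\Phi^+$ and $\sum_{s\notin S}\delta_s=\delta_i\in\Phi^+$, and $S$ is nonempty and proper precisely because $n>1$.

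To produce such an $i$, I would use positive-definiteness of the form $\cdot$ on $\R I$, which is exactly the finite type hypothesis on the Cartan datum. Pairing the relation $\a=\sum_{j=1}^n\delta_j$ with $\a$ and using bilinearity gives $\a\cdot\a=\sum_{j=1}^n\a\cdot\delta_j$; the left-hand side is strictly positive since $\a\neq 0$, so $\a\cdot\delta_i>0$ for at least one $i$. Fix such an $i$. Note that $\delta_i\neq\pm\a$: it cannot equal $-\a$ since both are positive roots, and it cannot equal $\a$ since then $\sum_{j\neq i}\delta_j$ would vanish, contradicting $n>1$.

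Next I would invoke the standard structural fact for (reduced, crystallographic) root systems, see \cite[Ch.~VI]{bourbaki}: if $\b,\ga$ are roots with $\b\neq\pm\ga$ and $\b\cdot\ga>0$, then $\b-\ga\in\Phi$. This is the observation that $2(\b\cdot\ga)/(\ga\cdot\ga)$ is a positive integer, so the $\ga$-string through $\b$ extends at least one step past $\b$ towards $-\ga$. Applying this with $\b=\a$ and $\ga=\delta_i$ shows $\a-\delta_i\in\Phi$. Since $\a-\delta_i=\sum_{j\neq i}\delta_j$ is a nonzero element of $\N I$ — again using $n>1$ — it is in fact a positive root, which completes the proof.

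I do not expect a substantive obstacle here; the main point is really the choice of tool. Given the surrounding material, the convex ordering of Lemma~\ref{convex} is the obvious candidate, but splitting the sum along the convex order seems to require a fiddly case analysis, whereas the single inner-product computation above is clean. The only points needing genuine care are the exclusions $\delta_i\neq\pm\a$ and $\a-\delta_i\neq 0$, and both hold transparently because $n>1$.
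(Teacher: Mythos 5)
Your proof is correct, and it takes a genuinely different route from the paper's. The paper argues by induction on $n$: if two distinct $\delta_i,\delta_j$ sum to a root one merges them and invokes the inductive hypothesis, so one may assume no such pairwise sum is a root, whence $\langle\delta_i^\vee,\delta_j\rangle\geq 0$ for all $i,j$, giving $\langle\delta_i^\vee,\a\rangle=2+\sum_{j\neq i}\langle\delta_i^\vee,\delta_j\rangle\geq 2$ and hence $\a-\delta_i\in\Phi$, with $S=\{i\}$. You dispense with the induction entirely: positive definiteness gives $\a\cdot\a=\sum_j\a\cdot\delta_j>0$, so some $\a\cdot\delta_i>0$, and the same root-string fact (for roots $\b\neq\pm\ga$ with $\b\cdot\ga>0$ the difference $\b-\ga$ is a root) yields $\a-\delta_i\in\Phi$, which is positive since it is a nonzero element of $\N I$; your checks that $\delta_i\neq\pm\a$ and $\a-\delta_i\neq 0$ via $n>1$ are exactly the points that need care, and they are handled correctly. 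Both arguments ultimately rest on subtracting a summand pairing positively with $\a$; yours buys a shorter, one-step argument by exploiting positive definiteness of the form (freely available in finite type), while the paper's merging trick works purely with the sign of pairings between the $\delta_i$ themselves at the cost of the inductive bookkeeping. You are also right that the convex-ordering route via Lemma~\ref{convex} is not the intended tool here.
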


\begin{proof}
 We proceed by induction on $n$. Thus we may assume without loss of generality that $\delta_i+\delta_j$ is not a root whenever $\delta_i\neq \delta_j$. Thus $\langle \delta_i^\vee,\delta_j\rangle \geq 0$ for all $i,j$.

Therefore $\langle\delta_i^\vee,\a\rangle=2+\sum_{j\neq i}\langle\delta_i^\vee,\delta_j\rangle\geq 2$. Hence $\a-\delta_i$ is a root and we may take $S=\{i\}$.
\end{proof}

\subsection{The Quantum Group}

For $i\in I$, let $q_i=q^{i\cdot i/2}$. The quantum integer is defined by $[n]_i= \frac{q_i^n-q_i^{-n}}{q_i-q_i^{-1}} $ and the quantum factorial is $[n]_i!=[n]_i[n-1]_i\cdots [1]_i$. For a root $\a$, we define $q_{\a}=q^{\a\cdot \a/2}$ and similarly define $[n]_\a !$.

We will work with Lusztig's twisted bialgebra $\f$. This is the associative algebra over the field $\Q(q)$ generated by elements $E_i$ for $i\in I$, subject to the quantum Serre relations
\[
\sum_{a+b=1-a_{ij}} (-1)^a E_i^{(a)}E_j E_i^{(b)} = 0
\] 
where $a_{ij}=2i\cdot j/i\cdot i$ and $E_i^{(n)}=E_i^n/[n]_i!$ is the quantum divided power.

This algebra is graded by $\N I$ where $E_i$ is homogeneous of degree $i$. Write $\f=\oplus_{\nu\in \N I} \f_\nu$. If $\nu =\sum_{i\in I}\nu_i\cdot i\in \N I$, let the height of $\nu$ be $|\nu|=\sum_{i\in I}\nu_i$.

The tensor product $\f \otimes \f$ is equipped with an algebra structure via the rule
\[
 (x_1\otimes x_2)(y_2\otimes y_2)=q^{-\mu\cdot\nu} x_1y_1\otimes x_2y_2
\] for $x_2\in \f_\mu$ and $y_1\in \f_\nu$. Define $r\map{\f}{\f\otimes\f}$ to be the unique algebra homomorphism with $r(E_i)=E_i\otimes 1+1\otimes E_i$. With this, $\f$ becomes a twisted bialgebra.

There is a nondegenerate symmetric bilinear form $(\cdot,\cdot)$ on $\f$ such that
\begin{eqnarray*}
(E_i,E_i)&=&\frac{1}{1-q_i^2} \\
(xy,z)&=&(x\otimes y,r(z)) \\
(x,yz)&=&(r(x),y\otimes z)
\end{eqnarray*}
where the bilinear form $(\cdot,\cdot)$ on $\f\otimes \f$ is given by $(x\otimes x',y\otimes y')=(x,y)(x',y')$.

Define $\f^*$ to be the graded dual of $\f$, i.e. $\f^*=\oplus_{\nu\in \N I} \f_\nu^*$.

Let $\A=\Z[q,q^{-1}]$. The twisted bialgebra $\f$ has an integral form over $\A$, denoted $\f_\A$. By definition $\f_\A$ is the $\A$-subalgebra of $\f$ generated by all divided powers $E_i^{(n)}$ for $i\in I$ and $n\in \N$. Its dual $\f^*$ inherits an integral form over $\A$, denoted $\f_\A^*$. Although $\f$ and $\f^*$ are isomorphic as twisted bialgebras over $\Q(q)$, this isomorphism does not extend to an isomorphism of their integral forms.

Consider a reduced decomposition $w_0=s_{i_1}s_{i_2}\cdots s_{i_N}$. Associated to this decomposition is a PBW basis of $\f$. This is customarily defined in terms of some algebra automorphisms $T_i$ of $U_q(\mathfrak{g})$ whose exact form shall not concern us (they are the automorphisms $T''_{i,+1}$ of \cite{lusztigbook}). We shall be content with summarising their relevant properties.

For each root $\a_k=s_{i_N}\cdots s_{i_{k+1}}\a_{i_k}$, we define a root vector $E_{\a_k}=T_{i_N}\cdots T_{i_{k+1}}E_{i_k}$. We use the notation $E_{\a}^{(n)}=E_{\a}^n/[n]_\a !$ for its divided power. Although the automorphisms $T_i$ do not preserve $\f$, each of these root vectors is an element of $\f$. The main theorem regarding PBW bases is the following:

\begin{theorem}\label{pbwbasis}
 The elements of the form $E_{\a_1}^{(m_1)}E_{\a_2}^{(m_2)}\cdots E_{\a_N}^{(m_N)}$ form an $\A$ basis of $\f_\A$, orthogonal with respect to the bilinear form $(\cdot,\cdot)$. Furthermore $(E_\a,E_\a)=(1-q_\a^2)^{-1}$.
\end{theorem}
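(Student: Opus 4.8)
The plan is to establish Theorem~\ref{pbwbasis} by reducing to the rank-one case and then patching together, using the twisted bialgebra structure and the bilinear form. First I would recall the standard facts about Lusztig's automorphisms $T_i$ (the $T''_{i,+1}$ of \cite{lusztigbook}): for a reduced word $w_0 = s_{i_1}\cdots s_{i_N}$, the elements $E_{\a_1}^{(m_1)}\cdots E_{\a_N}^{(m_N)}$ form a basis of $\f$ over $\Q(q)$, and in fact an $\A$-basis of $\f_\A$. Rather than reprove this from scratch (it is essentially \cite[Ch.~37--41]{lusztigbook}), I would take the existence of the PBW $\A$-basis as known, and concentrate on the \emph{orthogonality} assertion, which is what we actually need downstream. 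The value $(E_\a,E_\a) = (1-q_\a^2)^{-1}$ should come out of the same computation.

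The key step is the orthogonality. I would proceed by induction on $N$, the number of positive roots, peeling off the \emph{last} root $\a_N = \a_{i_N}$, which is simple. Write $i = i_N$ and let $\f'$ be the subalgebra of $\f$ fixed in the appropriate sense by $T_i$; more precisely, $T_i$ restricts to an isomorphism from the subalgebra generated by the $E_{\a_j}$ for $j < N$ onto a subalgebra of $\f$, and Lusztig's results describe the triangular decomposition $\f = \bigoplus_n \f^{(i)} E_i^{n}$ where $\f^{(i)} = \ker {}_i r$ for the appropriate skew-derivation ${}_i r$. The crucial compatibility is that $T_i$ is, up to an explicit scalar, an isometry for $(\cdot,\cdot)$ on the relevant pieces — this is where the twisted bialgebra axioms $(xy,z) = (x\otimes y, r(z))$ and $(E_i, E_i) = (1-q_i^2)^{-1}$ get used. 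Concretely, one shows that for $x, y$ in the smaller subalgebra and $m, n \geq 0$,
\[
(x\, E_i^{(m)},\, y\, E_i^{(n)}) = \delta_{mn}\, (T_i^{-1}x,\, T_i^{-1}y)\, \frac{1}{(1-q_i^2)(1-q_i^4)\cdots(1-q_i^{2m})}\,,
\]
which both separates the $E_i$-degree and reduces the pairing to one in fewer roots, where the inductive hypothesis applies after renumbering via $T_i$. Iterating gives orthogonality of the full PBW basis, and the rank-one case $(E_{\a_i}, E_{\a_i}) = (E_i, E_i) = (1-q_i^2)^{-1}$, combined with the $T_i$-isometry property, yields $(E_\a, E_\a) = (1-q_\a^2)^{-1}$ for every positive root $\a$ (note $q_\a = q^{\a\cdot\a/2} = q_i$ when $\a$ is $W$-conjugate to $\a_i$, since $\a\cdot\a = \a_i\cdot\a_i$).

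The main obstacle is the verification of the $T_i$-isometry identity above — establishing precisely how $T_i$ interacts with the coproduct $r$ and hence with $(\cdot,\cdot)$. This is delicate because $T_i$ does not preserve $\f$, only certain subspaces, so one must be careful about which pieces the identity is asserted on; the cleanest route is to use Lusztig's explicit formulas for $T_i$ acting on the subalgebra $\f^{(i)}$ (or on ${}^{(i)}\f$), together with the known behavior of $r$ on products $x E_i^n$. An alternative, if one wants to stay closer to the combinatorics of KLR algebras, is to establish the dual statement in $\f^*$ using the shuffle-type description and the fact that the dual PBW monomials can be characterized by their leading terms in a suitable monomial order on words — but since this paper will build the whole standard-module story on top of Theorem~\ref{pbwbasis}, it is most economical to quote \cite{lusztigbook} for existence of the basis and give only the short inductive argument above for orthogonality and the norm formula.
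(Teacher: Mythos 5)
The paper itself offers no proof of Theorem \ref{pbwbasis}: it is stated as a summary of the relevant properties of Lusztig's automorphisms $T''_{i,+1}$, with \cite{lusztigbook} as the reference, so there is no internal argument to measure yours against. Your sketch is, in substance, the standard proof from Lusztig's book (Chapters 38 and 40--41): the bilinear form factorizes across the decomposition of $\f$ into $\ker({}_i r)$ tensored with the powers of $E_i$, the braid operator $T_i$ is an isometry between $\f\cap T_i^{-1}(\f)$ and $\f\cap T_i(\f)$, and one inducts on the length of the Weyl group element after peeling off $E_{\a_N}=E_{i_N}$. Your key identity is correct in the paper's normalization --- one checks directly from $(E_i,E_i)=(1-q_i^2)^{-1}$ and the twisted coproduct that $(E_i^{(m)},E_i^{(m)})=\prod_{s=1}^{m}(1-q_i^{2s})^{-1}$, and the norm formula $(E_\a,E_\a)=(1-q_\a^2)^{-1}$ then follows from the isometry property since $\a$ is $W$-conjugate to a simple root of the same length --- but it holds only for $x,y$ in $\f\cap T_i(\f)=\ker({}_i r)$ (with the correct left/right conventions), so to run the induction you must also verify that the truncated monomials $E_{\a_1}^{(m_1)}\cdots E_{\a_{N-1}}^{(m_{N-1})}$ lie in that subspace; this is true because $T_{i_N}^{-1}$ carries them to the PBW monomials attached to the reduced word $s_{i_1}\cdots s_{i_{N-1}}$, which lie in $\f$, but your sketch only flags this point rather than proving it. With that verification supplied (all of it is in \cite{lusztigbook}), your argument is sound; the paper simply imports the whole statement from the literature, which is all that is needed for the rest of the article, and which is arguably the more economical choice given that nothing new is being proved here.
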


We are more interested in the dual PBW basis of $\f_\A^*$. Use the nondegenerate pairing $(\cdot,\cdot)$ to identify $\f^*$ with $\f$ and let $E_\a^*=(1-q_\a^2)E_\a$. Then the above theorem implies that the monomials $(E^*_{\a_1})^{m_1}(E^*_{\a_2})^{m_2}\cdots (E^*_{\a_N})^{m_N}$ form an $\A$ basis of $\f_\A^*$.

%%%%%%%%%%%%%%%%%%%%%%%%%%%%%%%%%%%%%%%%%%%%%%%%%%
\subsection{KLR Algebras}%%%%%%%%%%%%%%%%%%%%%%%%
%%%%%%%%%%%%%%%%%%%%%%%%%%%%%%%%%%%%%%%%%%%%%%%%%%%%%

For each $\nu\in \N I$, define
\[
\seq(\nu)=\{ (i_1,i_2,\ldots,i_k)\mid i_j\in I, \sum_{j=1}^k i_j=\nu \}
\]
For $1\leq j\leq |\nu|$, let $s_j$ be the simple reflection $(j,j+1)$ in the symmetric group on $|\nu|$ letters. This acts both on the set $\{1,2,\ldots |\nu|\}$ and on $\seq(\nu)$ in the usual way.

Fix a total ordering on $I$. We define polynomials $Q_{ij}(u,v)$ for all $i,j\in I$ by $Q_{ii}(u,v)=0$ and if $i\neq j$, $Q_{ij}(u,v)=u^{ -a_{ij}}-v^{-a_{ji}}$ if $i<j$ and $Q_{ji}(u,v)=Q_{ij}(v,u)$. Here $a_{ij}=2i\cdot j/i\cdot i$ is the Cartan integer.

Let $k$ be a field. The KLR algebra $R(\nu)$ is defined to be the associative $k$-algebra generated by elements $e_{\bf i}$, $y_j$, $\phi_k$ with ${\bf i}\in \seq(\nu)$, $1\leq j\leq |\nu|$ and $1\leq k< |\nu|$ subject to the relations
\begin{equation} \label{eq:KLR}
\begin{aligned}
& e_\ii e_{\uj} = \delta_{\ii, \uj} e_\ii, \ \
\sum_{\ii \in \Seq(\nu)}  e_\ii = 1, \\
& y_{k} y_{l} = y_{l} y_{k}, \ \ y_{k} e_\ii = e_\ii y_{k}, \\
& \phi_{l} e_\ii = e_{s_{l}\ii} \phi_{l}, \ \ \phi_{k} \phi_{l} =
\phi_{l} \phi_{k} \ \ \text{if} \ |k-l|>1, \\
& \phi_{k}^2 e_\ii = Q_{\ii_{k}, \ii_{k+1}} (y_{k}, y_{k+1})e_\ii, \\
& (\phi_{k} y_{l} - y_{s_k(l)} \phi_{k}) e_\ii = \begin{cases}
-e_\ii \ \ & \text{if} \ l=k, \ii_{k} = \ii_{k+1}, \\
e_\ii \ \ & \text{if} \ l=k+1, \ii_{k}=\ii_{k+1}, \\
0 \ \ & \text{otherwise},
\end{cases} \\[.5ex]
& (\phi_{k+1} \phi_{k} \phi_{k+1}-\phi_{k} \phi_{k+1} \phi_{k}) e_\ii\\
& =\begin{cases} \dfrac{Q_{\ii_{k}, \ii_{k+1}}(y_{k},
y_{k+1}) - Q_{\ii_{k}, \ii_{k+1}}(y_{k+2}, y_{k+1})}
{y_{k} - y_{k+2}}e_\ii\ \ & \text{if} \
\ii_{k} = \ii_{k+2}, \\
0 \ \ & \text{otherwise}.
\end{cases}
\end{aligned}
\end{equation}

An alternative interpretation of these algebras in terms of a diagrammatic calculus is given in \cite{khovanovlauda, klII}.
Since we are restricting ourselves to working in finite type, we have not given the most general form of these algebras, as developed for example in \cite{rouquier}.
The discussion in \cite{klII} shows that the choice of total ordering on $I$ is irrelevant (and indeed, it is included only to match up with the geometric picture in the simply-laced case presented in \cite{vv} and \cite{rouquier2}). %All results that we prove are independent of the choice of ground field $k$.

At times we will provide references to results in \cite{khovanovlauda}. The reader should not be concerned that \cite{khovanovlauda} only works with simply-laced Cartan data as the proofs carry over to the general case. This is discussed in \cite{klII}.

The algebras $R(\nu)$ are $\Z$-graded where $\deg(e_{\bf i})=0$, $\deg(y_j e_{\bf i})=\ii_j\cdot\ii_j$ and $\deg(\phi_k e_\ii)=-\ii_k\cdot\ii_{k+1}$.

Write $R(\nu)=\oplus_{j\in \Z}R(\nu)_j$ where $R(\nu)_j$ is the $j$-th graded piece. Then each $R(\nu)_j$ is finite dimensional, and $R(\nu)$ is almost positively graded in the sense that there exists $d\in \Z$ such that $R(\nu)_j=0$ for $j<d$.

All representations of $R(\nu)$ which we will consider will be $\Z$-graded representations.
If $M$ is such a representation, let $M_i$ denote its $i$-th graded piece and $\dimq(M)=\sum_i\dim(M_i)q^i$ be its graded dimension. For $a\in \Z$, let $M\{a\}$ be the module $M$ with grading shifted by $a$, so that $M\{a\}_i=M_{a+i}$. If it is not important to us, then we do not bother keeping track of the grading shifts. Thus many maps which appear in this paper are not necessarily of degree zero. In this vein, $\Ext^i(M,N)$ will denote the Ext group in the category of ungraded modules. It inherits a grading from gradings on $M$ and $N$.

The major reason for studying KLR algebras is the existence of isomorphisms due to Khovanov and Lauda \cite{khovanovlauda} identifying their Grothendieck groups of graded finitely generated projective and finite dimensional modules,
\begin{equation}\label{pmod}
 \bigoplus_{\nu\in \N I} K_0(R(\nu)\prmod) \cong \f_\A,
\end{equation}
and
\begin{equation}\label{fmod}
 \bigoplus_{\nu\in \N I} K_0(R(\nu)\fmod) \cong \f^*_\A.
\end{equation}
The $\A$-module structure on these Grothendieck groups arises from having $q$ act by the grading shift $M\mapsto M\{1\}$.

% The symmetric bilinear form $(,)$ on $\f$ induces an isomorphism $\f^*\cong \f$. Thus we get an injection of algebras
% \[
%  \ga :   \bigoplus_{\nu\in \N I} K_0(R(\nu)\fmod)\to \f.
% \]
In particular, note that the number of irreducible representations of $R(\nu)$ is equal to $\dim(\f_\nu)=\kpf(\nu)$.

Given a representation $M$ of $R(\nu)$ and $\uj\in\Seq(\nu)$, define the $\uj$-weight space of $M$ to be $e_{\uj}M$. The character of a representation $M$ is defined to be the formal sum
\[
 \ch(M)=\sum_{\uj\in\Seq(\nu)} \dimq (e_{\uj} M)[\j].
\]
We consider the character as an element of the quantum shuffle algebra, as in \cite[\S 4]{kleshchevram}. Then
the character of a finite dimensional module is equal to the image of $[M]$ under the isomorphism (\ref{fmod}) composed with the usual inclusion of $\f_\A^*$ into the quantum shuffle algebra.

There is a bar involution on $\f_\A^*$ which is easiest to describe as the restriction of the involution $\sum_\j f_\j(q)[\j]\mapsto \sum_\j f_\j(q^{-1})[\j]$ on the quantum shuffle algebra.

%%%%%%%%%%%%%%%%%%%%%%%%%%%%%%%%%%%%%%%%%%%%%%%%%%%%%%%%
\subsection{Induction and Restriction}%%%%%%%%%%%%%%%%%%
%%%%%%%%%%%%%%%%%%%%%%%%%%%%%%%%%%%%%%%%%%%%%%%%%%%%%%%%

Let $\la$ and $\mu$ be two elements of $\N I$. Then there is a non-unital inclusion of algebras $R(\la)\otimes R(\mu)\to R(\la+\mu)$. In the diagrammatic picture, this is defined by placing diagrams next to each other. %It sends the idempotent $e_\ii\otimes e_\jj$ to $e_{\ii\jj}$ where $\ii\j$ is the concatenation of $\ii$ and $\j$.  
Let $e_{\la\mu}$ denote the image of the identity of $R(\la)\otimes R(\mu)$ under this inclusion.

The restriction functor $\Res_{\la\mu}\colon R(\la+\mu)\mods\to R(\la)\otimes R(\mu)\mods$ is defined by $M\mapsto e_{\la\mu}M$ on objects and the obvious map on morphisms.

The induction functor $\Ind_{\la\mu}\colon R(\la)\otimes R(\mu)\mods\to R(\la+\mu)\mods$ is defined by $$\Ind_{\la\mu}(M\boxtimes N)= R(\la+\mu)e_{\la\mu} \bigotimes_{R(\la)\otimes R(\mu)} (M\boxtimes N).$$
We often write $M\otimes N$ for $\Ind(M\boxtimes N)$ and $M^{\otimes n}$ for $M\otimes M\otimes \cdots \otimes M$ ($n$ times).

Via the isomorphisms (\ref{pmod}) and (\ref{fmod}) the operations of induction and restriction induce multiplication and $r$ respectively. For a precise statement of these results, one may wish to consult \cite[\S 4]{kleshchevram}.

The coinduction functor $\Coind_{\la\mu}\map{R(\la)\otimes R(\mu)\mods}{R(\la+\mu)\mods}$ is defined by $$\Coind_{\la\mu}(M\boxtimes N)= \Ind_{\mu\la}(N\boxtimes M)\{(\la\cdot\mu)\}.$$

More generally, given $\la_1,\la_2,\ldots,\la_k\in \N I$, there are induction and restriction functors
\[
 \Ind_{\la_1,\ldots,\la_k}\map{\bigotimes_{i=1}^k R(\la_i)\mods }{R\left(\sum_{i=1}^k \la_i\right)\mods}
\]
and
\[
 \Res_{\la_1,\ldots,\la_k}:R\left(\sum_{i=1}^k \la_i\right)\mods \to \bigotimes_{i=1}^k R(\la_i)\mods.
\]
These functors satisfy the usual associativity conditions.

These functors have the adjunction properties that one expects from their names.

\begin{theorem}\cite[Theorem 2.2]{laudavazirani}
 The induction functor $\Ind$ is left adjoint to the restriction functor $\Res$, while the coinduction functor $\Coind$ is right adjoint to restriction.
\end{theorem}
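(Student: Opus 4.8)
The plan is to deduce both halves from the tensor--hom adjunction, after rewriting the three functors in bimodule language. Set $B=R(\la+\mu)$, $A=R(\la)\otimes R(\mu)$, and $e=e_{\la\mu}$. The (non-unital) inclusion $A\hookrightarrow B$ is injective by the basis theorem of Khovanov--Lauda and identifies $A$ with $eBe$ (with unit $e$); thus $Be$ is a $(B,A)$-bimodule, $eB$ an $(A,B)$-bimodule, and a left $A$-module is the same thing as a left $eBe$-module on which $e$ acts as the identity. By definition $\Ind_{\la\mu}(V)=Be\otimes_A V$ and $\Res_{\la\mu}(L)=eL$.

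The first adjunction is then formal. Evaluation at $e$ gives a natural isomorphism of left $A$-modules $\Hom_B(Be,L)\cong eL=\Res_{\la\mu}L$, and substituting this into
\[
\Hom_B\bigl(Be\otimes_A V,\,L\bigr)\;\cong\;\Hom_A\bigl(V,\,\Hom_B(Be,L)\bigr)
\]
yields $\Hom_B(\Ind_{\la\mu}V,L)\cong\Hom_A(V,\Res_{\la\mu}L)$, naturally in $V$ and $L$.

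For the second adjunction, the same tensor--hom identity applied to the bimodule $eB$ (using $eB\otimes_B L\cong eL$ naturally) shows that the functor $L\mapsto\Hom_A(eB,L)$ is right adjoint to $\Res_{\la\mu}$. It therefore suffices to produce a natural isomorphism $\Hom_A(eB,\,M\boxtimes N)\cong\Ind_{\mu\la}(N\boxtimes M)\{\la\cdot\mu\}$. By the Khovanov--Lauda basis theorem --- equivalently, after applying the anti-involution $\tau$ of $B$ fixing every generator $e_{\ii},y_j,\phi_k$, which preserves $A$ and interchanges left and right modules --- the bimodule $eB$ is free of finite rank as a left $A$-module; hence $\Hom_A(eB,-)\cong\Hom_A(eB,A)\otimes_A(-)$, and the task reduces to the bimodule isomorphism $\Hom_A(eB,A)\cong Be_{\mu\la}\{\la\cdot\mu\}$, modulo the flip $R(\la)\otimes R(\mu)\cong R(\mu)\otimes R(\la)$ that turns $M\boxtimes N$ into $N\boxtimes M$.

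This bimodule isomorphism is the crux and the only non-formal step: one must exhibit mutually dual $A$-bases of $eB$ and of $Be_{\mu\la}$ --- both indexed by the minimal-length coset representatives for $S_{|\la|}\times S_{|\mu|}$ in $S_{|\la|+|\mu|}$ --- and check that the resulting pairing is perfect, using the normal form (or the faithful polynomial representation) of $R(\nu)$. The grading shift $\{\la\cdot\mu\}$ appearing in the definition of $\Coind_{\la\mu}$ is forced by this computation: it matches the contribution of the top coset representative $w^{\circ}$, the permutation carrying the first $|\la|$ strands past the last $|\mu|$, which has length $|\la|\,|\mu|$ and for which $\deg(\phi_{w^{\circ}}e)=-\la\cdot\mu$. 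Once this isomorphism is in hand, both adjunctions follow purely formally.
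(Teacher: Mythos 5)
The paper does not prove this statement; it cites \cite[Theorem 2.2]{laudavazirani}, so your proposal has to stand on its own. The first half does: writing $\Ind_{\la\mu}=Be\otimes_A(-)$ and $\Res_{\la\mu}=e(-)\cong\Hom_B(Be,-)$ and invoking tensor--hom adjunction is a complete and correct proof that induction is left adjoint to restriction. One side remark is false, however: the embedding does \emph{not} identify $A=R(\la)\otimes R(\mu)$ with $eBe$. Already for $\la=\mu=\a_i$ one has $e=e_{(i,i)}=1$ and $eBe=R(2\a_i)$ is the nilHecke algebra, much larger than $k[y_1]\otimes k[y_2]$; the idempotent truncation $eBe$ contains all $\phi_we_\ii$ whose underlying permutation merely preserves the set of concatenated sequences, not only the block-diagonal ones. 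Fortunately you never use this identification --- only the bimodule structures on $Be$ and $eB$ --- so it should simply be deleted.

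The genuine gap is in the second half. With $\Coind$ \emph{defined} in this paper as $\Ind_{\mu\la}(N\boxtimes M)\{\la\cdot\mu\}$, the entire content of the theorem is exactly the $(B,A)$-bimodule isomorphism you isolate, $\Hom_A\bigl(e_{\la\mu}B,\,A\bigr)\cong Be_{\mu\la}\{\la\cdot\mu\}$ (right $A$-action on the target twisted by the flip); the reduction to it via finite-rank freeness of $eB$ over $A$ is fine, but you then only describe a program (``exhibit mutually dual bases indexed by minimal coset representatives and check the pairing is perfect'') without executing it. That verification is where all the work lies: one must produce an $A$-valued pairing between $e_{\la\mu}B$ and $Be_{\mu\la}$ that is $B$-balanced (so that the induced map is left $B$-linear, not merely right $A$-linear), show it is well defined --- e.g.\ as the projection of $e_{\la\mu}Be_{\mu\la}$ onto its top piece in a Mackey-type filtration, whose bimodule-map property needs proof --- and establish perfectness via the triangularity of products $\phi_{\hat w}\phi_{\hat v}e_\ii$ with respect to length/Bruhat order, where lower-order terms with polynomial coefficients appear and must be controlled; the grading shift $\{\la\cdot\mu\}$ and naturality in $M\boxtimes N$ then come out of this computation. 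This is precisely the non-formal argument carried out in Lauda--Vazirani (following the affine Hecke algebra case), so your outline points at the standard proof, but as written it asserts the crux rather than proving it.
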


\begin{proposition}\cite[Corollary 2.17]{khovanovlauda}
 The functors of induction and restriction take projective representations to projective representations.
\end{proposition}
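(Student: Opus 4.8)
The plan is to reduce both assertions to the module structure of $R(\la+\mu)$ over its ``parabolic'' subalgebra $R(\la)\otimes R(\mu)$, and then to invoke the basis theorem of Khovanov and Lauda. Recall that $\Ind_{\la\mu}$ is the functor $R(\la+\mu)e_{\la\mu}\otimes_{R(\la)\otimes R(\mu)}(-)$ and that $\Res_{\la\mu}$ is $e_{\la\mu}R(\la+\mu)\otimes_{R(\la+\mu)}(-)$ (indeed $\Res_{\la\mu}M=e_{\la\mu}M$). Each of these functors commutes with arbitrary direct sums and with the formation of direct summands, and every projective module is a direct summand of a free one; so it suffices to compute the image of a free module and check that the output is projective.

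For induction this is immediate. Since $e_{\la\mu}$ is idempotent, $R(\la+\mu)=R(\la+\mu)e_{\la\mu}\oplus R(\la+\mu)(1-e_{\la\mu})$ as left $R(\la+\mu)$-modules, so $R(\la+\mu)e_{\la\mu}$ is projective over $R(\la+\mu)$. As $\Ind_{\la\mu}$ carries the free module $(R(\la)\otimes R(\mu))^{\oplus n}$ to $(R(\la+\mu)e_{\la\mu})^{\oplus n}$, it sends free modules to projectives, hence projectives to projectives.

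For restriction the parallel computation gives $\Res_{\la\mu}\big(R(\la+\mu)^{\oplus n}\big)=\big(e_{\la\mu}R(\la+\mu)\big)^{\oplus n}$, so the whole statement reduces to: $e_{\la\mu}R(\la+\mu)$ is projective --- in fact free of finite rank --- as a \emph{left} $R(\la)\otimes R(\mu)$-module. I would deduce this from the explicit basis of $R(\la+\mu)$ adapted to the parabolic subalgebra, which follows from faithfulness of the polynomial representation in \cite{khovanovlauda}: $R(\la+\mu)e_{\la\mu}$ is free as a \emph{right} $R(\la)\otimes R(\mu)$-module, with basis given diagrammatically by the minimal permutation diagrams representing the cosets $(S_{|\la|}\times S_{|\mu|})\backslash S_{|\la+\mu|}$. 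Applying the antiautomorphism $\psi$ of $R(\la+\mu)$ that fixes every generator $e_\ii$, $y_j$, $\phi_k$ --- which fixes $e_{\la\mu}$ and restricts to an antiautomorphism of $R(\la)\otimes R(\mu)$ --- turns this right module into the left module $e_{\la\mu}R(\la+\mu)$ while preserving freeness, which is exactly what is needed. (Alternatively the left freeness can be read directly off the two-sided form of the parabolic basis theorem.)

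Finally, the general functors $\Ind_{\la_1,\ldots,\la_k}$ and $\Res_{\la_1,\ldots,\la_k}$ are compositions of two-term induction and restriction functors by the associativity already recorded, so the general case follows by induction on $k$. The only non-formal ingredient is the freeness of $e_{\la\mu}R(\la+\mu)$ over $R(\la)\otimes R(\mu)$ used in the restriction half; this rests on the explicit KLR basis results rather than on categorical generalities, and it is where I expect the substance of the proof to lie.
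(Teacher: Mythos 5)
Your argument is correct and coincides with the proof in the cited source: the paper itself offers no proof, simply quoting \cite[Corollary 2.17]{khovanovlauda}, and there the statement is obtained exactly as you obtain it --- reduce to free modules, note $\Ind$ sends the free module to the direct summand $R(\lambda+\mu)e_{\lambda\mu}$ of $R(\lambda+\mu)$, and use the parabolic basis theorem \cite[Proposition 2.16]{khovanovlauda} (transferred across the diagram-flip antiautomorphism, or stated two-sidedly) to see that $e_{\lambda\mu}R(\lambda+\mu)$ is free over $R(\lambda)\otimes R(\mu)$, so $\Res$ also preserves projectives. The only cosmetic slip is the coset bookkeeping: the basis of $R(\lambda+\mu)e_{\lambda\mu}$ as a right $R(\lambda)\otimes R(\mu)$-module is indexed by minimal-length representatives of $S_{|\lambda+\mu|}/(S_{|\lambda|}\times S_{|\mu|})$ rather than of the right cosets, which does not affect the argument.
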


\begin{corollary}\label{extadjunction}
 There are natural isomorphisms
\[
 \Ext^i(\Ind(A\boxtimes B),C)\cong\Ext^i(A\boxtimes B,\Res(C))
\] and 
\[
 \Ext^i(\Res(A),B\boxtimes C)\cong\Ext^i(A,\Ind(C\boxtimes B)).
\]

\end{corollary}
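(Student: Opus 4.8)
The statement follows directly from the previous two results by a standard homological-algebra argument, so the plan is simply to assemble them correctly. First I would recall that an adjoint pair of exact functors, one of which sends projectives to projectives, induces isomorphisms on all higher Ext groups. Concretely, for the first isomorphism, choose a projective resolution $P_\bullet\to A\boxtimes B$ in the category of $R(\lambda)\otimes R(\mu)$-modules. Since $\Ind$ is exact (it is given by tensoring with the bimodule $R(\lambda+\mu)e_{\lambda\mu}$, which is projective, hence flat, as a right $R(\lambda)\otimes R(\mu)$-module) and takes projectives to projectives by the Proposition of Khovanov--Lauda cited above, $\Ind(P_\bullet)\to\Ind(A\boxtimes B)$ is a projective resolution. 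Therefore
\[
 \Ext^i(\Ind(A\boxtimes B),C)=H^i\big(\Hom(\Ind(P_\bullet),C)\big)\cong H^i\big(\Hom(P_\bullet,\Res(C))\big)=\Ext^i(A\boxtimes B,\Res(C)),
\]
where the middle isomorphism is the adjunction $\Hom(\Ind(-),C)\cong\Hom(-,\Res(C))$ from the Theorem of Lauda--Vazirani, applied degreewise and naturally in the resolution.

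For the second isomorphism the roles are reversed: here one uses that $\Coind$ is right adjoint to $\Res$, together with the fact that $\Res$ takes projectives to projectives (again the Khovanov--Lauda Proposition). Take a projective resolution $P_\bullet\to A$ of $R(\sum\lambda_i)$-modules; then $\Res(P_\bullet)\to\Res(A)$ is a projective resolution, and
\[
 \Ext^i(\Res(A),B\boxtimes C)=H^i\big(\Hom(\Res(P_\bullet),B\boxtimes C)\big)\cong H^i\big(\Hom(P_\bullet,\Coind(B\boxtimes C))\big)=\Ext^i(A,\Coind(B\boxtimes C)),
\]
using the adjunction $\Hom(\Res(-),B\boxtimes C)\cong\Hom(-,\Coind(B\boxtimes C))$. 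Finally I would unwind the definition $\Coind_{\lambda\mu}(B\boxtimes C)=\Ind_{\mu\lambda}(C\boxtimes B)\{(\lambda\cdot\mu)\}$ given earlier, so that $\Coind(B\boxtimes C)=\Ind(C\boxtimes B)$ up to a grading shift; since we are working in the ungraded category (or tracking gradings only up to shift, as the paper explicitly allows), this yields the stated form $\Ext^i(A,\Ind(C\boxtimes B))$.

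There is essentially no obstacle here — the only point requiring a word of care is the exactness of $\Ind$, which is why one wants $R(\lambda+\mu)e_{\lambda\mu}$ to be flat (equivalently projective) as a right $R(\lambda)\otimes R(\mu)$-module; this is part of the standard theory of KLR induction and is implicit in the cited references. One should also note that all the isomorphisms are natural in the relevant variables, which is automatic since they are built from natural adjunction isomorphisms and functorial resolutions. The grading shift in $\Coind$ is harmless given the paper's stated convention of not tracking grading shifts.
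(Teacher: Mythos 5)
Your proposal is correct and is essentially the argument the paper intends: the corollary is stated as an immediate consequence of the adjunction theorem of Lauda--Vazirani and the Khovanov--Lauda proposition that $\Ind$ and $\Res$ preserve projectives, and your degreewise application of the adjunctions to (exactly induced/restricted) projective resolutions, followed by rewriting $\Coind(B\boxtimes C)$ as $\Ind(C\boxtimes B)$ up to grading shift, is the standard derivation. The only points you flag (exactness of $\Ind$ via freeness of $R(\lambda+\mu)e_{\lambda\mu}$ as a right module, exactness of $\Res$, and the harmlessness of the shift in the ungraded Ext) are handled correctly.
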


The following Mackey-style result will play an important role. The proof is the same as for the special case considered in \cite{khovanovlauda}. 

\begin{proposition}\cite[Proposition 2.18]{khovanovlauda}\label{mackey}
Let $\la_1,\ldots,\la_k,\mu_1\ldots,\mu_l\in\N I$ be such that $\sum_i \la_i=\sum_j \mu_j$. Then the composite functor
$ \Res_{\mu_1,\ldots,\mu_l}\circ\Ind_{\la_1,\ldots,\la_k}
$ has a filtration indexed by tuples $\nu_{ij}$ satisfying $\la_i=\sum_j \nu_{ij}$ and $\mu_j=\sum_i\nu_{ij}$.
The subquotients of this filtration are isomorphic, up to a grading shift, to the composition $\Ind_\nu^\mu\circ \tau\circ \Res_\nu^\la$ where
$\Res_{\nu}^\la\map{\otimes_i R(\la_i)\mods}{\otimes_i(\otimes_jR(\nu_{ij}))\mods}$ is the tensor product of the $\Res_{\nu_{i\bullet}}$, $\tau\map{\otimes_i(\otimes_jR(\nu_{ij}))\mods}{\otimes_j(\otimes_iR(\nu_{ij}))\mods}$ is given by permuting the tensor factors and $\Ind_\nu^\mu\map{\otimes_j(\otimes_iR(\nu_{ij}))\mods}{\otimes_j R(\mu_j)\mods}$ is the tensor product of the $\Ind_{\nu_{\bullet i}}$.
\end{proposition}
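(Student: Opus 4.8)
\emph{Proof strategy.} The plan is to follow the method of Khovanov and Lauda, the only genuinely new ingredient being combinatorial bookkeeping to pass from two tensor factors to arbitrarily many. First I would reduce to the basic case $k=l=2$. Since $\Ind_{\lambda_1,\ldots,\lambda_k}$ and $\Res_{\mu_1,\ldots,\mu_l}$ are each obtained by iterating the two-factor functors, both induction and restriction are exact (restriction is multiplication by an idempotent, and $R(\lambda+\mu)e_{\lambda\mu}$ is free as a right $R(\lambda)\otimes R(\mu)$-module by the Khovanov--Lauda basis theorem), and a filtration of a functor precomposed and postcomposed with exact functors yields a filtration of the composite, an induction on $k+l$ reduces the statement to analysing $\Res_{\mu_1,\mu_2}\circ\Ind_{\lambda_1,\lambda_2}$. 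Under the iteration the $2\times 2$ indexing matrices assemble into the asserted indexing by tuples $(\nu_{ij})$; checking that this bookkeeping matches the statement is routine.

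For the basic case, set $n=|\lambda_1+\lambda_2|$ and use the identification
\[
\Res_{\mu_1,\mu_2}\Ind_{\lambda_1,\lambda_2}(M\boxtimes N)\cong B\otimes_{R(\lambda_1)\otimes R(\lambda_2)}(M\boxtimes N),
\]
where $B=e_{\mu_1\mu_2}R(\lambda_1+\lambda_2)e_{\lambda_1\lambda_2}$, viewed as an $\big(R(\mu_1)\otimes R(\mu_2),\,R(\lambda_1)\otimes R(\lambda_2)\big)$-bimodule. The key input is the Khovanov--Lauda basis theorem: $R(\nu)$ is free over its polynomial subalgebra with basis $\{\phi_w e_\ii\}$, indexed by $w$ in the symmetric group on $|\nu|$ letters (with $\phi_w$ defined via a fixed reduced word) and $\ii\in\Seq(\nu)$. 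Partition the induced basis of $B$ according to which double coset $(S_{\mu_1}\times S_{\mu_2})\backslash S_n/(S_{\lambda_1}\times S_{\lambda_2})$ the permutation $w$ lies in; such double cosets, together with the coloring data, are in bijection with quadruples $(\nu_{11},\nu_{12},\nu_{21},\nu_{22})$ in $\N I$ satisfying $\nu_{i1}+\nu_{i2}=\lambda_i$ and $\nu_{1j}+\nu_{2j}=\mu_j$.

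Next I would order the double cosets — by the length of the minimal double-coset representative, or a refinement thereof — so that the span of the basis elements lying over any upward-closed set of cosets is a sub-bimodule of $B$. Using the defining relations (\ref{eq:KLR}), in particular those for $\phi_k^2$ and the braid-type relations, one checks that multiplying such a span on the left by generators of $R(\mu_1)\otimes R(\mu_2)$ and on the right by generators of $R(\lambda_1)\otimes R(\lambda_2)$ stays within the span of cosets of length at least that of the given one. The minimal representative attached to $(\nu_{ij})$ factors as a permutation that splits each $\lambda_i$-block into sub-blocks of sizes $\nu_{i1},\nu_{i2}$, then transposes the two inner sub-blocks, then reassembles them into the $\mu_j$-blocks; chasing this factorisation through the tensor identities identifies the corresponding subquotient bimodule, up to the grading shift produced by the crossings in this representative, with the bimodule realising $\Ind_\nu^\mu\circ\tau\circ\Res_\nu^\lambda$. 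Applying the right-exact functor $-\otimes_{R(\lambda_1)\otimes R(\lambda_2)}(M\boxtimes N)$ transports this filtration of $B$ to the desired filtration of the composite functor.

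The main obstacle I anticipate is the third paragraph: verifying that the chosen order on double cosets genuinely yields sub-bimodules requires a careful application of the KLR relations to track what happens when a generator is pushed past the block-splitting and block-merging crossings, and — more delicately — pinning down each subquotient as \emph{exactly} the asserted functor composite rather than merely an abstract bimodule of the correct graded rank, with all grading shifts through $\tau$ and the splitting/merging steps accounted for honestly. None of this is conceptually new; it is precisely the computation of \cite[Proposition 2.18]{khovanovlauda}, to which the reader may be referred, but it is where the work lies.
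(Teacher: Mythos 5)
Your proposal is correct and is essentially the argument the paper relies on: the paper gives no independent proof, simply noting that the proof is the same as for the special case in \cite[Proposition 2.18]{khovanovlauda}, and your sketch (reduction to the two-factor case, the bimodule $e_{\mu_1\mu_2}R(\la_1+\la_2)e_{\la_1\la_2}$, the basis theorem, and the filtration by minimal double-coset representatives with the subquotients identified as $\Ind_\nu^\mu\circ\tau\circ\Res_\nu^\la$) is exactly that Khovanov--Lauda argument. Nothing further is needed beyond the bookkeeping you already flag as routine.
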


% \begin{proof}
% For each term in filtration, we write down a generator. Since we know a spanning set for the bimodule, the obvious generators generate the composite functor as a bimodule.
% 
% Define an essential crossing. Since the number of essential crossings can never increase, we get a filtration on the bimodule. The subquotient is then spanned by all diagrams with the same pattern of essential crossings.
% 
% We need to show that the middle action of the $R(\la_i)$'s does nothing modulo terms with fewer essential crossings. This follows from inspection of the defining relations in the KLR algebras.
% 
% Thus there is a map from Ind of Res into the subquotient bimodule. It is obviously surjective, and injective by computing dimensions.
% 
% The filtration on the bimodule representing the composition yields a filtration of the functor Res of Ind since each of the subbimodules represents an exact functor. 
% \end{proof}
% 
% Each of these subquotients corresponds to a projective bimodule, so we don't have any worries with Tor's appearing.

\begin{proposition}\label{characterinjective}
 For any representations $A$ and $B$, the set of Jordan-Holder constituents of $A\otimes B$ is equal, up to a grading shift, to the set of Jordan-Holder constituents of $B\otimes A$.
\end{proposition}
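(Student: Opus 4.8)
The plan is to exploit the Mackey filtration of Proposition~\ref{mackey} together with the characterization of characters coming from the isomorphism~(\ref{fmod}). First I would reduce to the case where $A$ and $B$ are simple: the Jordan--H\"older constituents of $A\otimes B$ depend (up to grading shift) only on the multiset of constituents of $A$ and of $B$, since induction is exact on both sides, so it suffices to prove the statement for $A$ simple over $R(\la)$ and $B$ simple over $R(\mu)$. Exactness of $\Ind$ follows from the fact that $R(\la+\mu)e_{\la\mu}$ is a free right $R(\la)\otimes R(\mu)$-module, as recorded in \cite{khovanovlauda}.

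Next I would compare the two inductions by comparing their images in $\f_\A^*$ under~(\ref{fmod}). The key point is that the set of Jordan--H\"older constituents of a finite dimensional module $M$, up to grading shift, is determined by the character $\ch(M)$: indeed distinct simple modules have $\mathbb Z[q,q^{-1}]$-linearly independent characters (their characters form part of a basis up to shift, since $[M]$ determines $\ch(M)$ and the simples give a basis of $\f_\A^*$), so $\ch(M)$ together with the characters of the simples of $R(\la+\mu)$ determines the constituents up to shift. Now $\ch(A\otimes B)$ is, up to a power of $q$, the quantum shuffle product $\ch(A)\cdot\ch(B)$, which in the shuffle algebra differs from $\ch(B)\cdot\ch(A)$ only by a power of $q$ coming from the $q^{-\mu\cdot\nu}$ twist in the multiplication on $\f\otimes\f$ (equivalently, by the grading shift built into the definition of $\Coind$). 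Hence $\ch(A\otimes B)$ and $\ch(B\otimes A)$ agree up to an overall power of $q$, and therefore $A\otimes B$ and $B\otimes A$ have the same Jordan--H\"older constituents up to grading shift.

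Alternatively, and perhaps more cleanly, one can argue entirely on the module side: $\Coind_{\mu\la}(B\boxtimes A) = \Ind_{\la\mu}(A\boxtimes B)\{\la\cdot\mu\}$ by definition, so $A\otimes B$ is (a shift of) a coinduction of $B\boxtimes A$. But $\Ind_{\mu\la}(B\boxtimes A)$ and $\Coind_{\mu\la}(B\boxtimes A)$ have the same composition factors up to shift: one contains a copy of $B\boxtimes A$ as a submodule, the other as a quotient, and by the Mackey filtration applied to $\Res_{\mu\la}\Ind_{\mu\la}(B\boxtimes A)$ --- whose top piece (the diagonal term $\nu_{11}=\mu$, $\nu_{22}=\la$) is $B\boxtimes A$ itself --- one sees the induction and coinduction are linked; more directly, the duality functor on $R(\nu)\fmod$ (the graded duality intertwining $\Ind$ and $\Coind$) sends $B\otimes A=\Ind_{\mu\la}(B\boxtimes A)$ to $\Coind_{\mu\la}(B^\vee\boxtimes A^\vee)$, and taking composition factors is insensitive to this duality up to shift. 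Either route gives the claim.

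The main obstacle is being careful about what ``same constituents'' means in the presence of grading shifts and about the precise bookkeeping of the $q$-powers; the genuinely nontrivial input is already packaged in Proposition~\ref{mackey} (or in the identification of characters with shuffle products via~(\ref{fmod})), so once that is invoked the argument is essentially formal. I expect the cleanest write-up to use the character/shuffle-algebra comparison, since commutativity of the shuffle product up to a power of $q$ is transparent.
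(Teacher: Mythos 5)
Your preferred write-up (the character/shuffle comparison) has a genuine gap: the quantum shuffle product is \emph{not} commutative up to an overall power of $q$, so $\ch(A\otimes B)$ and $\ch(B\otimes A)$ need not agree up to a power of $q$. The twist $q^{-\mu\cdot\nu}$ enters the multiplication on $\f\otimes\f$ (hence $r$), not as a global commutation factor on $\f^*$. Concretely, for $A=L(i)$, $B=L(j)$ with $i\neq j$ and $i\cdot j\neq 0$ one gets $\ch(A\otimes B)=(ij)+q^{-i\cdot j}(ji)$ while $\ch(B\otimes A)=(ji)+q^{-i\cdot j}(ij)$; these are proportional by a power of $q$ only when $i\cdot j=0$, and correspondingly $[A\otimes B]$ and $[B\otimes A]$ are genuinely different elements of the graded Grothendieck group (here $A\otimes B$ and $B\otimes A$ are non-split extensions of the same two one-dimensional simples, but with different shifts). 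The statement being proved is exactly the weaker, ungraded assertion, and the repair is to forget the grading: at $q=1$ the shuffle product \emph{is} commutative, so $A\otimes B$ and $B\otimes A$ have equal ungraded characters, and the ungraded character map on the Grothendieck group is injective. This is the ungraded version of \cite[Theorem 3.17]{khovanovlauda}, and that single citation is the paper's entire proof; your argument becomes correct (and becomes the paper's argument) once you specialise $q=1$ instead of trying to keep track of a single overall power of $q$.

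Your alternative route can be made to work and is genuinely different from the paper's, but not as you sketched it: the appeal to the Mackey filtration does not by itself ``link'' induction and coinduction, and in this paper $\Coind_{\mu\la}(B\boxtimes A)=\Ind_{\la\mu}(A\boxtimes B)\{\la\cdot\mu\}$ is the \emph{definition}, so that identity alone carries no content. What does work is: reduce to $A$, $B$ simple using exactness of $\Ind$ (as you do); use that the graded duality $M\mapsto M^\vee$ on finite dimensional modules commutes with $\Res$ and hence, by uniqueness of adjoints, intertwines $\Ind$ with $\Coind$ (this is in \cite{laudavazirani}); and use that simple modules are self-dual up to a grading shift. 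Then $(B\otimes A)^\vee\cong\Coind_{\mu\la}(B^\vee\boxtimes A^\vee)$ is, up to shift, $A\otimes B$, and since duality permutes the simples up to shift, the two modules have the same Jordan--H\"older constituents up to shift. If you write that version up with the precise inputs cited, it is a valid proof independent of the character-map argument; the version you flagged as cleanest, however, rests on a false commutativity claim and would not survive as stated.
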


\begin{proof}
 This is immediate from the ungraded version of \cite[Theorem 3.17]{khovanovlauda}.
\end{proof}

%%%%%%%%%%%%%%%%%%%%%%%%%%%%%%%%%%%%%%%%%%
\section{Simple Representations}%%%%%%%%%%
%%%%%%%%%%%%%%%%%%%%%%%%%%%%%%%%%%%%%%%%%%

For the duration of this section, we fix once and for all a reduced decomposition for $w_0$. By the discussion in the previous section, this choice induces a convex order on $\Phi^+$ and a dual PBW basis of $\f^*$. As a result, all results in this section will depend on the choice of this decomposition. %We will not address the question of how this classification depends on the choice of decomposition - this is worked out in terms of the combinatorics of Mirkovic-Vilonen polytopes in \cite{tingleywebster}.

Let $\a_1<\cdots < \a_N$ be the enumeration of $\Phi^+$ determined by the choice of reduced decomposition. For each positive root $\a$, let $E_\a$ denote the corresponding root vector in the PBW basis of $\f$ and let $E_\a^*$ be the corresponding vector in the dual PBW basis of $\f^*$.

Let $\N^{\Phi^+}$ be the set of functions from $\Phi^+$ to $\N$. Given $m\in \N^{\Phi^+}$, we identify $m$ with an $N$-tuple of natural numbers $(m_1,\ldots,m_N)$ via $m_i=m(\a_i)$. We will use the notation $1_\a$, or sometimes $\a$ to denote the characteristic function of $\a$. The support of $m\in \N^{\Phi^+}$ is defined to be the set of positive roots $\a$ for which $m(\a)\neq 0$. For $m\in \N^{\Phi^+}$, we let $|m|=\sum_\a m(\a) \a$.

We put a lexicographic ordering on $\N^{\Phi^+}$, namely $(m_1,\ldots,m_N)< (n_1,\ldots,n_N)$ if $n_l>m_l$ where $l$ is the largest index $i$ for which $m_i\neq n_i$.

There is also the opposite lexicographic ordering on $\N^{\Phi^+}$, given by $(m_1,\ldots,m_N)<' (n_1,\ldots,n_N)$ if $n_l>m_l$ where $l$ is the smallest index $i$ for which $m_i\neq n_i$.

For $m\in \N^{\Phi^+}$ define $\Res_{m}=\Res_{m_1 \a_1,m_2\a_2,\ldots,m_N\a_n}$.

We are now in a position to state our main theorem. It generalises \cite[Theorem 7.2, Proposition 7.4]{kleshchevram} and \cite[Proposition 4.2.1]{hmm}. We will prove it, together with all other lemmas in this section simultaneously by induction on the height of $\nu$.

\begin{theorem}\label{main}
 For each positive root $\a$, there exists a simple representation $S_\a$ of
$R(\a)$ such that
\begin{enumerate}
\item The image of the class of $S_\a$ under the isomorphism (\ref{fmod}) is the dual PBW basis element $E_\a^*$.
 \item For each $m\in \N^{\Phi^+}$, the representation $\Delta(m)=S_1^{\otimes m_1}\otimes
S_2^{\otimes m_2}\otimes\cdots \otimes S_N^{\otimes m_N}$ has a unique
irreducible quotient $L(m)$.
\item The simple representations $L(m)$ thus constructed form a set of
representatives of isomorphism classes of simple representations of
KLR algebras.
\item The representation $\nabla(m)=S_N^{\otimes m_N}\otimes
S_{N-1}^{\otimes m_{N-1}}\otimes\cdots \otimes S_1^{\otimes m_1}$ has $L(m)$ as its socle.
\item Any simple constituent $L(m')$ of a composition series of $\Delta(m)$ or $\nabla(m)$ satisfies $m\leq m'$ and $m\leq' m'$.
Furthermore $L(m)$ appears in such a composition series with multiplicity one.

\end{enumerate}
\end{theorem}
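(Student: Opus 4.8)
The plan is to prove Theorem~\ref{main} by a simultaneous induction on the height $|\nu|$, as the statement itself announces. The base case $|\nu|=1$ is immediate: $R(i)$ is a polynomial ring in one variable, its unique simple module is the trivial one-dimensional module, which we take to be $S_{\a_i}$, and all the claims are trivially true for simple roots. For the inductive step, fix $\nu$ and assume all parts of the theorem (and the auxiliary lemmas of the section) hold for all $\nu'$ of smaller height. The construction of $S_\a$ for a non-simple positive root $\a$ of height $|\nu|$ is the delicate part: one takes a minimal pair $(\b,\ga)$ for $\a$, considers the induced module $S_\b\otimes S_\ga$ (both factors exist by induction since $|\b|,|\ga|<|\a|$), and defines $S_\a$ to be an appropriate simple subquotient — concretely, the cuspidal piece characterized by the vanishing of $\Res_{\mu\,\mu'}$ for all nontrivial splittings $\a=\mu+\mu'$. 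One must show such a simple module exists, is unique, and has class $E_\a^*$ in the Grothendieck group; this uses the orthogonality of the PBW basis (Theorem~\ref{pbwbasis}), the compatibility of induction/restriction with multiplication and comultiplication, and a character computation in the shuffle algebra. For $\nu$ not equal to a single positive root, the modules $S_\a$ for $|\a|=|\nu|$ were just constructed, and all $S_{\a_i}$ appearing in any $\Delta(m)$ with $|m|=\nu$ have strictly smaller height, so the standard modules $\Delta(m)$ and $\nabla(m)$ are defined.

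The heart of the inductive step, and the part most relevant to statement~(5), is the following chain of claims, proved in order for the fixed $\nu$. \emph{First}, I would compute $\Res_m \Delta(m)$ using the Mackey filtration (Proposition~\ref{mackey}): because each $S_{\a_i}$ is cuspidal (it restricts to zero under any nontrivial splitting within $R(m_i\a_i)$, a fact established for smaller heights or just-constructed roots), the only surviving term in the filtration is the ``diagonal'' one, giving $\Res_m\Delta(m)\cong S_1^{\boxtimes m_1}\boxtimes\cdots\boxtimes S_N^{\boxtimes m_N}$ up to a grading shift; in particular this space is one-dimensional up to shift, hence $\dim\Hom(\Delta(m),\Delta(m))$-related quantities are controlled and $\Delta(m)$ has a \emph{unique} irreducible quotient $L(m)$ (part~(2)) — the standard argument is that $\Res_m$ of any quotient must be nonzero by adjunction and a highest-weight argument, so two distinct irreducible quotients would force $\dim\Res_m\Delta(m)\geq 2$. \emph{Second}, for any simple constituent $L(m')$ of $\Delta(m)$, applying $\Res_{m}$ and using that $\Res_m\Delta(m)$ is (up to shift) the single PBW-type tensor product, together with the inductively known restriction behaviour of the $L(m')$, one deduces the ``triangularity'' $m\leq m'$ in the lex order; replacing $\Res_m$ by the restriction to the \emph{opposite} ordering $\Res_{m_N\a_N,\ldots,m_1\a_1}$ and using the analogous analysis of $\nabla$ gives $m\leq' m'$. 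The multiplicity-one assertion for $L(m)$ in $\Delta(m)$ follows from the one-dimensionality (up to shift) of $\Res_m\Delta(m)$ combined with the fact that only $L(m)$ among the constituents $L(m')$ has $\Res_m L(m')\neq 0$ in the relevant degree (any $L(m')$ with $m'>m$ restricts to zero there, by the triangularity applied one level down). \emph{Third}, parts~(1), (3), (4) are then obtained: (1) from the character computation identifying $\ch S_\a$ with $E_\a^*$ in the shuffle algebra; (3) by counting — the $L(m)$ with $|m|=\nu$ are pairwise non-isomorphic (distinguished by the largest $m$ with $\Res_m\neq 0$) and there are $\kpf(\nu)$ of them, matching $\dim\f_\nu$; (4) by the symmetry between $\Delta$ and $\nabla$ implemented by the contravariant duality together with Proposition~\ref{characterinjective} relating constituents of $A\otimes B$ and $B\otimes A$.

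The main obstacle I anticipate is the self-referential bootstrapping: the construction of the new cuspidal modules $S_\a$ for $|\a|=|\nu|$ must be interleaved correctly with the verification of parts~(2)--(5) for \emph{all} $m$ with $|m|=\nu$, including those $m$ supported away from the top — one has to be careful that the cuspidality and Grothendieck-group identities needed as inputs for a given $m$ only ever invoke the theorem at strictly smaller heights or the freshly-built $S_\a$, never circularly. Concretely, the subtle point is proving that the simple subquotient picked out of $S_\b\otimes S_\ga$ is independent of the chosen minimal pair $(\b,\ga)$ and genuinely cuspidal (kills all nontrivial restrictions, not just the one coming from $(\b,\ga)$); this requires a convexity argument on $\Phi^+$ (Lemmas~\ref{convex} and~\ref{roots}) to rule out intermediate restrictions, plus an orthogonality/pairing computation to pin down the character. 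Once $S_\a$ is in hand and cuspidal, the Mackey-filtration analysis that yields statement~(5) is essentially formal, mirroring the arguments of~\cite{kleshchevram,hmm} but carried out for an arbitrary convex order rather than the Lyndon-word one.
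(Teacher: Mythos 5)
Your overall architecture (simultaneous induction on height, Mackey filtration to compute $\Res_m\Delta(m)$, adjunction to produce the unique head, triangularity for (5), counting for (3)) matches the paper, but there are concrete gaps. First, your characterization of $S_\a$ as the simple subquotient of $S_\b\otimes S_\ga$ ``characterized by the vanishing of $\Res_{\mu\,\mu'}$ for all nontrivial splittings'' cannot be right: for non-simple $\a$, any simple $R(\a)$-module $M$ has $\Res_{i_1,\a-i_1}M\neq 0$ for the first letter $i_1$ of any word occurring in its character, so no such module exists. The correct property is the order-theoretic one of Lemma \ref{ressa} (nonvanishing of $\Res_{\b\ga}S_\a$ forces $\b$ to be a sum of roots $\geq\a$ and $\ga$ a sum of roots $\leq\a$), and the paper does not define $S_\a$ inside $S_\b\otimes S_\ga$ at all: it defines $S_\a$ by counting, as the unique simple of $R(\a)$ which is not the head $L(m)$ of one of the $\kpf(\a)-1$ standard modules $\Delta(m)$, $m\neq 1_\a$, available by induction, and only afterwards proves the restriction property. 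Second, your claim that $\Res_m\Delta(m)$ is ``one-dimensional up to shift'' is false; what the head/socle argument needs is that $S_1^{\otimes m_1}\boxtimes\cdots\boxtimes S_N^{\otimes m_N}$ is simple, which requires irreducibility of the cuspidal powers $S_\a^{\otimes n}$ (Lemma \ref{cuspidalpower}); the paper proves this by showing $[S_\a^{\otimes n}]=(E_\a^*)^n$ is indivisible in $\f_\A^*$, an input you never supply and which itself uses part (1) at smaller height.

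Most seriously, part (1) is not ``a character computation in the shuffle algebra.'' The paper's proof that $[S_\a]=E_\a^*$ requires: the Levendorskii--Soibelman formula together with the minimal-pair analysis to show both $[S_\a]$ and $E_\b^*E_\ga^*-q^{\b\cdot\ga}E_\ga^*E_\b^*$ are multiples of the same element; a specialization at $q=1$ to the Lie algebra to see that this element is nonzero; bar-invariance (after normalizing the grading) to reduce to $[S_\a]=\pm E_\a^*$; and then a genuinely nontrivial step to remove the sign --- in symmetric type via the geometric realization of Varagnolo--Vasserot and Rouquier identifying $[S_\a]$ with a dual canonical basis element, and in non-symmetric type via folding, i.e.\ specializing $\f_\A^*$ to $\Z[N]$ at $q=1$ and reducing the positivity of $(E_\ii,E_\a^*)$ to the simply-laced case. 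Nothing in your outline addresses this sign/positivity issue, and without some such external input the inductive identification $[S_\a]=E_\a^*$ --- on which your later steps (indivisibility, the Grothendieck-group bookkeeping in the triangularity argument) lean --- does not close.
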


The simple representations $S_\a$ of $R(\a)$ appearing in this theorem will be referred to as cuspidal representations. We begin by discussing how to construct these cuspidal representations.

We know that the number of simple representations of $R(\a)$ is equal to $\kpf(\a)$. There are $\kpf(\a)-1$ representations $\Delta(m)$ of $R(\a)$ which can be assumed to already have been constructed by inductive hypothesis.
By the above theorem, each $\Delta(m)$ has irreducible head $L(m)$, and these simple representations are pairwise non-isomorphic. The invocation of our main theorem at this stage will turn out to be valid, as is determinable by an inspection of its proof. The cuspidal representation $S_\a$ is then defined to be the unique simple representation of $R(\a)$ that has not already been constructed.

This process determines $S_\a$ up to a grading shift. We normalise the grading shift as in \cite{khovanovlauda}, which ensures that $[S_\a]$ is bar-invariant.

There is an alternative construction of these cuspidal representations using reflection functors in the simply-laced characteristic zero case due to Kato \cite{kato}.

The following lemma underlies the entire argument.

\begin{lemma}\label{ressa}
 Let $\a$ be a positive root. If $\b,\ga\in\N I$ are such that $\Res_{\beta,\ga}S_\a \neq 0$, then $\beta$ is a
sum of roots greater than or equal to $\a$ and $\ga$ is a sum of roots less than or equal to $\a$.
\end{lemma}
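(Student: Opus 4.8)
The plan is to run the induction on height of $\a$ together with the rest of the statements in Theorem~\ref{main}, and argue by contradiction. Suppose $\Res_{\b,\ga}S_\a\neq 0$ but (say) $\b$ fails to be a sum of roots all $\geq\a$; we want to derive a contradiction, and by the symmetry of the two conditions (swapping the order of the tensor factors, using the coinduction/duality already available via Corollary~\ref{extadjunction} and Proposition~\ref{characterinjective}) it suffices to handle one of the two assertions. The base case $|\a|=1$ is trivial since then $\b=0$ or $\ga=0$. So assume $\a$ is not simple and $|\b|,|\ga|\geq 1$.

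First I would refine the restriction further. Since $\Res_{\b,\ga}S_\a\neq 0$, pick some simple subquotient $A\boxtimes B$ of $\Res_{\b,\ga}S_\a$; by the inductive hypothesis (parts (2)--(3) of Theorem~\ref{main} at smaller heights), $A$ and $B$ are of the form $L(\m')$, $L(\m'')$ respectively, and applying $\Res$ further along the convex order lets us write $\b=\sum \b_i$, $\ga=\sum\ga_j$ as sums of positive roots occurring in the supports of $\m'$ and $\m''$. The point is then to show that \emph{every} root $\b_i$ in such a decomposition of $\b$ is $\geq\a$ and every $\ga_j$ is $\leq\a$. Here is where the combinatorial Lemma~\ref{roots} and the convexity Lemma~\ref{convex} enter: if some $\b_i<\a$, I would like to ``absorb'' $\b_i$ together with part of $\ga$ to build a sub-restriction that contradicts the categorification of the dual PBW basis, statement~(1). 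Concretely, assuming minimality of the decomposition (so no $\b_i+\b_j$ nor $\b_i+\ga_j$ is a root when it would help), Lemma~\ref{roots} forces the existence of a subset whose sum is a root, and iterating this I can arrange a two-term restriction $\Res_{\delta,\epsilon}S_\a\neq 0$ with $\delta$ a single root $<\a$ and $\epsilon=\a-\delta$ also a sum with a root $>\a$ --- but convexity (Lemma~\ref{convex}) says $\a$ lies strictly between any such $\delta$ and the complementary part, so this configuration with $\delta<\a$ must also have $\a-\delta$ containing a root $>\a$; tracking the grading and using that $[S_\a]=E_\a^*$ is a single dual PBW vector (not a higher monomial) rules it out. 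In other words, if $\Res_{\delta,\epsilon}S_\a\neq 0$ with $\delta<\a$, then via the inductive description of $K_0$ the class $[S_\a]$ would have a nonzero component on $E^*_{\b_1}E^*_{\b_2}\cdots$ with the $\b$'s not all equal to $\a$, contradicting~(1).

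I expect the genuine obstacle to be making the last sentence precise: translating ``$\Res_{\b,\ga}S_\a\neq 0$ with $\b$ not a sum of roots $\geq\a$'' into a statement about the expansion of $[S_\a]$ in the dual PBW basis, and then showing that expansion is forced to be inhomogeneous in a way incompatible with $[S_\a]=E_\a^*$. This requires carefully combining: (a) the fact from Proposition~\ref{mackey} and the $r$-compatibility of $\Res$ that $r([S_\a])=[\Res S_\a]$ up to the twist, (b) the inductive hypothesis applied to the smaller-height factors so that every simple subquotient of the iterated restriction is some $L(\m)$ with known leading term $<\m$ on the PBW basis, and (c) the orthogonality in Theorem~\ref{pbwbasis} to isolate coefficients. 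Assembling (a)--(c) into a clean contradiction, while keeping the convex-order bookkeeping (which roots can appear to the left/right) under control through all the iterated restrictions, is the delicate part; the purely root-theoretic inputs (Lemmas~\ref{convex} and~\ref{roots}) are exactly what tame that bookkeeping, so once the $K_0$-translation is set up correctly the argument should close.
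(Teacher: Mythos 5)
There is a genuine gap at the heart of your argument. Your intended contradiction is: if $\Res_{\delta,\epsilon}S_\a\neq 0$ with a ``bad'' factor, then $[S_\a]$ has a nonzero component on a dual PBW monomial other than $E_\a^*$, contradicting Theorem \ref{main}(1). But nonvanishing of a restriction is a statement about $r([S_\a])$, not about the expansion of $[S_\a]$ itself, and $r(E_\a^*)$ has many nonzero components with both tensor factors nonzero (for instance $\Res_{\ga\b}S_\a\neq 0$ for any minimal pair $(\b,\ga)$, cf.\ Lemma \ref{resgammabeta}); so the mere existence of a component with roots other than $\a$ is perfectly compatible with $[S_\a]=E_\a^*$. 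To make your step work you would need the Levendorskii--Soibelman-type convexity property of the coproduct: that every dual PBW monomial occurring in $r(E_\a^*)$ has left factor supported on roots $\geq\a$ and right factor on roots $\leq\a$. That statement is essentially equivalent to the lemma you are trying to prove, and you neither prove it nor cite it -- ``orthogonality'' and ``tracking the grading'' do not supply it. You flag this sentence yourself as the obstacle; it is indeed where all the content lies, so the proposal does not close.

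There is also an induction-order problem: you invoke $[S_\a]=E_\a^*$ (part (1)) at height $|\a|$ to prove Lemma \ref{ressa} at height $|\a|$, but in the simultaneous induction Lemma \ref{ressa} at a given height feeds (through Lemma \ref{restriction} and parts (2)--(5)) into the proof of part (1) at that height; at the very least you would have to reorganize the induction and verify that (1) can be obtained first, which you do not do. The paper avoids both issues entirely: it never uses (1) at the current height, only the \emph{construction} of $S_\a$ as the unique simple of $R(\a)$ that is not the head of any $\Delta(m)$ with $m\neq 1_\a$. One reduces to $\ga$ a single root, chosen maximal with $\Res_{\a-\ga,\ga}S_\a\neq 0$, takes $\delta$ the largest root in the support of the left factor, and uses adjunction to produce a nonzero map $M\boxtimes\Ind(S_\delta\boxtimes S_\ga)\to\Res_{\b-\delta,\delta+\ga}(S_\a)$; in either case ($\delta=\b$ or not) convexity forces $\delta\leq\ga$, so some $\Delta(m+1_\ga)$ surjects onto $S_\a$, contradicting its construction. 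Your root-theoretic ingredients (Lemmas \ref{convex} and \ref{roots}) are the right ones, but the $K_0$-translation you propose is not a substitute for this module-theoretic argument.
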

\begin{proof}
We will prove the statement about $\ga$ being a sum of roots less than or equal to $\a$, and the corresponding statement for $\b$ will follow similarly. We proceed by induction on the height of $\a$, assuming by induction that Theorem \ref{main} is known for all $\nu$ with $|\nu|<|\a|$.

Let $L(m)\boxtimes L(m')$ be a simple subrepresentation of $\Res_{\b,\ga}(S_\a)$.
Let $\ga'$ be the largest root in the support of $m'$. Then $\Res_{\a-\ga',\ga'}(S_\a)\neq 0$. To prove that $\ga$ is a sum of roots less than or equal to $\a$, it suffices to prove that $\ga'\leq \a$.
We may thus replace $(\b,\ga)$ by $(\a-\ga',\ga')$.
In this manner we may assume without loss of generality that $\ga$ is a root. Furthermore, without loss of generality assume that $\ga$ is the largest possible root for which $\Res_{\a-\ga,\ga}S_\a \neq 0$.
This discussion then shows that $L(m')=S_\ga$.

Let $\delta$ be the largest root appearing in the support of $m$. 
Since $L(m)$ is a quotient of $\Delta(m)$, by adjunction we can find a nonzero map $M\boxtimes S_\delta\to \Res_{\b-\delta,\delta}(L(m))$ for some $R(\b-\delta)$-module $M$.

This induces a nonzero map $M\boxtimes S_\delta\boxtimes S_\ga\to \Res_{\b-\delta,\delta,\ga}(S_\a)$ which by adjunction induces a nonzero map $M\boxtimes \Ind(S_\delta\boxtimes S_\gamma)\to \Res_{\b-\delta,\delta+\ga}(S_\a)$.

If $\delta=\b$, then this realises $S_\a$ as a quotient of $\Ind(S_\b\boxtimes S_\ga)$. By construction of $S_\a$, we must have that $\ga\leq\b$. Since $\a=\b+\ga$, we obtain from Lemma \ref{convex} that $\ga\leq\a$ as required.

Now assume that we are not in the $\delta=\b$ case. Let $L(m'')$ be a simple module in the composition series of $\Ind(S_\delta\boxtimes S_\ga)$ that is not in the kernel of the map $M\boxtimes \Ind(S_\delta\boxtimes S_\gamma)\to \Res_{\b-\delta,\delta+\ga}(S_\a)$. By assumption on the maximality of $\ga$, every $\ga''$ in the support of $m''$ satisfies $\ga''\leq \ga$. Thus $\delta+\ga$ is a sum of roots less than or equal to $\ga$, so using Lemma \ref{convex}, $\delta\leq \ga$.

Since $\delta\leq \ga$, the module $\Delta(m+1_\ga)$ surjects onto $S_\a$, contradicting the construction of $S_\a$ and thus the lemma is proved.
\end{proof}

\begin{lemma}\label{restriction}
For $m,n\in \N^{\Phi^+}$, we have
\[ 
\Res_{n}\Delta(m) = \begin{cases}
                                               0 & \mbox{if } n > m \mbox{ or } n>'m\\
S_1^{\otimes m_1}\boxtimes\cdots\boxtimes S_N^{\otimes m_N} & \mbox {if } n=m.
                                              \end{cases}
\]
% Furthermore,
% \[ 
% \Res_{n}L(m) = \begin{cases}
%                                                0 & \mbox{if } n > m \mbox{ or } n>'m\\
% S_1^{\otimes m_1}\boxtimes\cdots\boxtimes S_N^{\otimes m_N} & \mbox {if } n=m.
%                                               \end{cases}
% \]
\end{lemma}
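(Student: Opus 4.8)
The plan is to prove this by unwinding the Mackey filtration (Proposition~\ref{mackey}) applied to the iterated induction product $\Delta(m) = S_1^{\otimes m_1}\otimes\cdots\otimes S_N^{\otimes m_N}$, together with the structural constraint on $\Res S_\a$ provided by Lemma~\ref{ressa}. The key observation is that each subquotient of $\Res_n\Delta(m)$ is, up to grading shift, of the form $\Ind_\nu^n\circ\tau\circ\Res_\nu^m$ indexed by a matrix $(\nu_{ij})$ of elements of $\N I$ with $m_i\a_i = \sum_j \nu_{ij}$ (row sums) and $n_j\a_j = \sum_i \nu_{ij}$ (column sums). So it suffices to show two things: first, that whenever $n>m$ or $n>'m$ there is no such matrix $\nu$ for which the corresponding subquotient is nonzero; and second, that when $n=m$ the only contributing matrix is the diagonal one, whose subquotient is precisely $S_1^{\otimes m_1}\boxtimes\cdots\boxtimes S_N^{\otimes m_N}$.

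For the vanishing, here is the mechanism. The restriction $\Res_\nu^m$ applied to $S_i^{\otimes m_i}$ lands in $\bigotimes_j R(\nu_{ij})\mods$; nonvanishing of a factor $\Res_{\nu_{i1},\ldots,\nu_{iN}}(S_i^{\otimes m_i})$ forces (via the Mackey filtration for $S_i^{\otimes m_i}$, or directly by Corollary~\ref{extadjunction} and induction) each $\nu_{ij}$ to be a sum of roots, and more precisely Lemma~\ref{ressa} applied to each copy of $S_i$ shows that a nonzero restriction of $S_i^{\otimes m_i}$ into $R(\la)\otimes R(\mu)$ has $\la$ a sum of roots $\ge \a_i$ and $\mu$ a sum of roots $\le\a_i$. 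Iterating this over the ordered factors, nonvanishing of $\Res_\nu^m$ forces $\nu_{ij}$ to be a sum of roots all of which are $\le\a_i$ when $j$ is "to the left" and $\ge\a_i$ when $j$ is "to the right"; combined with the column-sum condition $n_j\a_j = \sum_i\nu_{ij}$ and the convexity Lemma~\ref{convex}, one deduces that $\nu_{ij}=0$ unless $\a_j$ lies in the interval spanned by the $\a_i$'s contributing to column $j$ --- and pushing this through forces, for the extreme columns first and then inductively inward, that $\nu_{ij}$ can only be supported on the diagonal $i=j$, i.e. $\nu_{ii}=m_i\a_i$ and $n_i=m_i$. Any violation of $n\le m$ (in either lexicographic order) produces, at the first or last index of disagreement, a column whose required support is empty, giving the vanishing.

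In the equality case $n=m$, the above analysis shows the only nonzero contribution comes from the diagonal matrix $\nu_{ii}=m_i\a_i$, and for this term $\tau$ is the identity, $\Res_\nu^m$ is the identity (each $\Res_{\nu_{i\bullet}}$ restricts $S_i^{\otimes m_i}$ to itself as a module over $R(m_i\a_i)$), and $\Ind_\nu^n$ is likewise the identity, so the subquotient is exactly $S_1^{\otimes m_1}\boxtimes\cdots\boxtimes S_N^{\otimes m_N}$, occurring once; the grading shift attached to the diagonal term of the Mackey filtration is trivial, so the equality holds on the nose.

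The main obstacle will be bookkeeping the iterated restriction $\Res_\nu^m$ carefully enough to justify the claim that nonvanishing forces the matrix $\nu$ to be diagonal whenever the column sums are forced to be multiples of the $\a_j$. The subtlety is that Lemma~\ref{ressa} only controls restrictions of a \emph{single} cuspidal $S_\a$ into \emph{two} factors; extracting from it the statement for a tensor power $S_i^{\otimes m_i}$ split into $N$ factors requires an auxiliary induction, using that the Jordan--Hölder constituents of $S_i^{\otimes m_i}$ are among the $L(m')$ with $m'\ge m_i1_{\a_i}$ and $m'\ge' m_i1_{\a_i}$ (a consequence of Theorem~\ref{main}(5), which is available by the simultaneous induction on height), so that all their restrictions are again controlled root-interval-wise by $\a_i$. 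Once that reduction is in place, the convexity argument closing out the support analysis is routine.
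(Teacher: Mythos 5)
Your plan is essentially the paper's own proof: apply the Mackey filtration of Proposition \ref{mackey} to $\Delta(m)$, control the entries of the resulting matrix via Lemma \ref{ressa}, use convexity (Lemma \ref{convex}) to force the extreme column to be a multiple of its root coming only from the matching rows, and peel off that column inductively (largest column for $>$, smallest for $>'$), with the diagonal matrix giving the $n=m$ case. One correction: in your iteration sentence the inequalities are transposed --- by Lemma \ref{ressa} the entries of row $i$ lying in columns to the \emph{left} are (as partial sums) sums of roots $\ge \a_i$ and those to the \emph{right} are sums of roots $\le \a_i$, exactly as you state correctly one sentence earlier; the extreme-column convexity step only works with this orientation, since a copy of $\a_N$ in the last column must then come from a row with $\a_i\ge\a_N$, forcing $i=N$ and hence $n_N\le m_N$.
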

\begin{proof}

Suppose $n\geq m$. Let $l$ be the largest index with $n_l\neq 0$. Then $m_i=0$ for all $i>l$.
Suppose that $\Res_n\Delta(m)\neq 0$. Proposition \ref{mackey} identifies the subquotients in a filtration for $\Res_n\Delta(m)$, one of which must be nonzero. This nonzero subquotient is indexed by a set of elements $\nu_i\in \N I$ with $n_l\a_l = \sum_i \nu_i$ and for each $i$, there is some $j\leq l$ with $\Res_{\a_j-\nu_i,\nu_i}S_{\a_j}\neq 0$. By Lemma \ref{ressa}, each $\nu_i$ is a sum of roots less than or equal to $\a_j$, which is in turn less than or equal to $\a_l$.

We have expressed $n_l\a_l$ as a sum of roots less than or equal to $\a_l$. Since the ordering on positive roots is convex, this can happen in exactly one way. Thus we must have $m_l\geq n_l$.

For $\Res_n\Delta(m)$ to be nonzero, the only option now is that $m_l=n_l$. Let $m'=m-m_l1_{\a_l}$ and $n'=n-n_l1_{\a_l}$. In this case, the above argument shows that we necessarily have
\[
 \Res_n\Delta(m) = \Res_{n'}\Delta(m')\boxtimes S_{\a_l}^{\otimes m_l}.
\]

By induction on $l$, we have proved this Lemma for the ordering $>$ on $\N^{\Phi^+}$. Similarly, we obtain the result for the ordering $>'$.
\end{proof}

The following lemma generalises \cite[Lemma 6.6]{kleshchevram}.

\begin{lemma}\label{cuspidalpower}
 The representation $S_\a^{\otimes n}$ of $R(n\a)$ is irreducible.
\end{lemma}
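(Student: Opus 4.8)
The plan is to prove irreducibility of $S_\a^{\otimes n}$ by exploiting the restriction computation in Lemma \ref{restriction} together with the Mackey filtration of Proposition \ref{mackey}. The key observation is that since $S_\a$ is the cuspidal module and $S_\a^{\otimes n} = \Delta(n1_\a)$, Lemma \ref{restriction} already tells us that $\Res_{n1_\a}\Delta(n1_\a) = S_\a^{\boxtimes n}$, which is irreducible as a module over $R(\a)^{\otimes n}$ since each $S_\a$ is simple. So it suffices to show that any simple submodule (equivalently, by applying the same argument to $\nabla$, any simple quotient) of $S_\a^{\otimes n}$ has nonzero restriction to $R(\a)^{\otimes n}$; then the nonzero map from this restriction into $S_\a^{\boxtimes n}$ must be an isomorphism onto it, and one deduces the submodule is everything.

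First I would set $L = L(m)$ to be the irreducible head of $\Delta(n1_\a) = S_\a^{\otimes n}$, which exists by Theorem \ref{main}(2). By Theorem \ref{main}(5), any composition factor $L(m')$ of $\Delta(n1_\a)$ satisfies $n1_\a \le m'$ and $n1_\a \le' m'$; I want to argue that the only $m'$ with $|m'| = n\a$ satisfying both inequalities is $m' = n1_\a$ itself. Indeed, writing $n\a$ as a sum of positive roots in any other way must involve a root strictly less than $\a$ (in the $\le'$ direction this forces the smallest differing index to go up, contradicting convexity — more precisely, by Lemma \ref{convex} any expression of $n\a$ as a nonnegative combination of roots has all those roots lying between the smallest and largest appearing, and the extreme cases pin things down). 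The cleanest route is: by convexity, the support of any $m'$ with $|m'| = n\a$ must satisfy that its minimal element is $\le \a$ and maximal element is $\ge \a$; the constraints $n1_\a \le m'$ and $n1_\a \le' m'$ from both lexicographic orders then force $m'(\b) = 0$ for all $\b \ne \a$. Hence $\Delta(n1_\a)$ has a unique composition factor, necessarily $L(n1_\a)$ with multiplicity one by Theorem \ref{main}(5), so $S_\a^{\otimes n} = \Delta(n1_\a)$ is irreducible.

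The main obstacle is making the convexity argument in the previous paragraph airtight: one must verify that among all $m' \in \N^{\Phi^+}$ with $|m'| = n\a$, the function $n1_\a$ is simultaneously the minimum in both the $<$ and $<'$ orderings. For the $<'$ ordering, if $m' \ne n1_\a$ let $\b$ be the smallest root with $m'(\b) \ne 0$; since $|m'| = n\a$ and convexity (Lemma \ref{convex}) gives $\b \le \a$, and if $\b < \a$ then comparing with $n1_\a$ at index corresponding to $\b$ we would need $m'(\b) > (n1_\a)(\b) = 0$, which is true, but then $m' >' n1_\a$ — wait, this says $m' >' n 1_\a$ only if $\b$ is genuinely the first index of difference and $m'$ is larger there, so actually this is consistent with $n1_\a \le' m'$; the real content is that $\b = \a$ is forced, because if $\b < \a$ then $n\a - m'(\b)\b$ must be a nonnegative combination of roots all $\ge \b$ summing to something with a negative coefficient on $\a$ after cancellation — this needs care. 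I would handle it by instead using that $\Res_{n1_\a}\Delta(n1_\a) \ne 0$ while $\Res_{n1_\a}\Delta(m') = 0$ whenever $m' \ne n1_\a$ has $|m'| = n\a$ (this is essentially Lemma \ref{restriction} applied with the roles reversed, or follows from the same Mackey-plus-convexity analysis), so $L(n1_\a)$ is the only simple with the right restriction behaviour, forcing all composition factors of $S_\a^{\otimes n}$ to be $L(n1_\a)$; combined with the multiplicity-one statement this yields irreducibility. So the delicate point reduces to the bookkeeping already implicit in Lemma \ref{restriction}, and the rest is formal.
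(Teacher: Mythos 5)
There are two genuine problems here. First, the inequality argument is inverted. Theorem \ref{main}(5) says every constituent $L(m')$ of $\Delta(n1_\a)$ satisfies $m'\geq n1_\a$ and $m'\geq' n1_\a$; but $n1_\a$ is the \emph{minimum} in both lexicographic orders among all $m'$ with $|m'|=n\a$ (any other such $m'$ has, by Lemma \ref{convex}, a root $<\a$ and a root $>\a$ in its support), so these conditions are satisfied by \emph{every} $m'$ and exclude nothing. The statement that actually does the work is the vanishing half of Lemma \ref{restriction}: $\Res_{m'}\Delta(n1_\a)=0$ for every $m'\neq n1_\a$ with $|m'|=n\a$ (precisely because every such $m'$ is strictly larger in both orders), combined with $\Res_{m'}L(m')\neq 0$. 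Your pivot in the last paragraph gestures at this but reverses the roles: $\Res_{n1_\a}\Delta(m')$ is not the relevant object — in the paper's notation $\Res_{n1_\a}=\Res_{n\a}$ is restriction along the single block $n\a$, essentially the identity functor, so $\Res_{n1_\a}\Delta(m')=\Delta(m')\neq 0$. The same misreading undermines your opening ``key observation'': $\Res_{n1_\a}\Delta(n1_\a)=S_\a^{\otimes n}$ is vacuous, and if you instead mean $\Res_{\a,\ldots,\a}$, the claim is false, since by Proposition \ref{mackey} that restriction has $n!$ subquotients isomorphic to shifts of $S_\a^{\boxtimes n}$.

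Second, and more seriously, the concluding step ``combined with the multiplicity-one statement this yields irreducibility'' is circular. Lemma \ref{cuspidalpower} and Theorem \ref{main} are proved simultaneously by induction on height, and within the height $|n\a|$ step the proofs of Theorem \ref{main}(2), (3), (5) — in particular the existence of the irreducible head of $\Delta(n1_\a)$ that you invoke, and the multiplicity-one assertion, which rests on $\Res_m\Delta(m)=\Res_m L(m)$ — themselves use the irreducibility of $S_\a^{\otimes n}$ that is being proved. This is exactly why the paper finishes differently: after showing (by the restriction argument above) that only one isomorphism class of simples can occur in a composition series of $S_\a^{\otimes n}$, it rules out multiplicity $\geq 2$ by computing in the Grothendieck group: by induction (Theorem \ref{main}(1) for the root $\a$, of strictly smaller height) $[S_\a]=E_\a^*$, so $[S_\a^{\otimes n}]=(E_\a^*)^n$, which is a dual PBW basis vector of $\f_\A^*$ and hence indivisible. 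Your proposal is missing this indivisibility/PBW ingredient, and without it (or some substitute not relying on Theorem \ref{main}(2)--(5) at $\nu=n\a$) the multiplicity-one step has no non-circular justification.
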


\begin{proof}

 By Lemma \ref{restriction}, $\Res_m(S_\a^{\otimes n})=0$ unless the support of $m$ is $\a$. Since $\Res_m(L(m'))=0$ unless the support of $m'$ is $\a$, only one isomorphism class of simple modules can appear in the composition series of $S_\a^{\otimes n}$. So to prove irreducibility, it suffices to prove that $[S_\a^{\otimes n}]$ is indivisible in the Grothendieck group.

If $n=1$, then this lemma is true by definition. Otherwise, by induction applied to Theorem \ref{main}(1), we may assume $[S_\a]= E_\a^*$.
Thus $[S_\a^{\otimes n}]=(E_\a^*)^n$ which is indivisible in $\f_\A^*$ as required.
\end{proof}

We now prove Theorem \ref{main}

\begin{proof}[(2):]
Let $Q$ be a nonzero quotient of $\Delta(m)$. By adjunction there is a nonzero morphism
$S_1^{\otimes m_1}\boxtimes S_2^{\otimes m_2}\boxtimes\cdots \boxtimes
S_N^{\otimes m_n}\rightarrow \Res_m(Q)$. Lemma \ref{cuspidalpower} implies that the source of this morphism is irreducible, hence this map is injective.

If $\Delta(m)$ had a reducible head, then there would be a surjection $\Delta(m)\to Q\oplus Q'$ with $Q$ and $Q'$ nonzero. By exactness of the restriction  functor there is a surjection $\Res_m\Delta(m)\to\Res_mQ\oplus \Res_mQ'$.

Lemma \ref{restriction} implies that $\Res_m(\Delta(m))=S_1^{\otimes m_1}\boxtimes\cdots\boxtimes S_N^{\otimes m_N}$ which is simple, while the preceding discussion tells us that both $\Res_m Q$ and $\Res_m Q'$ contain $S_1^{\otimes m_1}\boxtimes\cdots\boxtimes S_N^{\otimes m_N}$ as a submodule. This contradicts the existence of such a surjection, proving (2).
\end{proof}

\begin{proof}[(3):]
The proof of (2) above also shows that $\Res_m L(m)=S_1^{\otimes m_1}\boxtimes\cdots\boxtimes S_N^{\otimes m_N}$. Lemma \ref{restriction} and the exactness of the restriction functor tell us that $\Res_{m'}L(m)=0$ unless $m\geq m'$.

These facts imply that the set of representations $\{ L(m)\mid |m|=\nu\}$ are pairwise nonisomorphic. There are $\kpf(\nu)$ simple representations of $R(\nu)$ in this set and by (\ref{fmod}), the algebra $R(\nu)$ has exactly $\kpf(\nu)$ simple representations.
\end{proof}

\begin{proof}[(4):]
 Since $\Res_m L(m) = S_1^{\otimes m_1}\boxtimes\cdots\boxtimes S_N^{\otimes m_N}$, by the coinduction adjunction we obtain a nonzero map $L(m)\to\nabla(m)$ which is injective as $L(m)$ is simple. The remainder of the proof proceeds in exactly the same fashion as the proof of (2) above.
\end{proof}

\begin{proof}[(5):]
 Suppose $L(m')$ appears in a composition series for $\Delta(m)$. We apply the exact functor $\Res_{m'}$. Since $\Res_{m'}L(m')\neq 0$ this implies $\Res_{m'}\Delta(m) \neq 0$. By Lemma \ref{restriction}, $m\geq m'$ and $m\geq' m'$. For the case when $m=m'$, we get a multiplicity of one since $\Res_m\Delta(m)=\Res_m L(m)$.

The statement for $\nabla(m)$ follows either by a similar argument or by Proposition \ref{characterinjective}.
\end{proof}

\begin{proof}[(1):]
If $\a$ is simple, $R(\a_i)\cong k[x]$ with $x$ homogeneous of degree $\a\cdot\a$ and this is an easy calculation. Now suppose $\a$ is a positive root that is not simple. Let $(\b,\ga)$ be a minimal pair for $\a$. %We can write $\a=\b+\ga$ where $\b$ and $\ga$ are positive roots. Without loss of generality, $\b<\a<\ga$. Also without loss of generality, assume that there do not exist $\b'$ and $\ga'$ with $\b'+\ga'=\a$ and $\b<\b'<\a<\ga<\ga'$.
We will be performing an induction on the height of the root $\a$.

Let $L(m)$ be a simple constituent of a composition series of $S_\b\otimes S_\ga$ or equivalently, by Proposition \ref{characterinjective}, of $S_\ga\otimes S_\b$. The element $m$ yields in an obvious manner a way to write $\a$ as a sum of positive roots. If $m\neq 1_\a$, by Lemma \ref{roots} we can write $\a$ as a sum of two positive roots $\b'$, $\ga'$, each of which is a sum of roots in the support of $m$.

By Theorem \ref{main}(5), the support of $m$ only contains roots between $\b$ and $\ga$ inclusive. Thus $\b\leq \b'$ and $\ga'\leq\ga$. By our assumption that $(\b,\ga)$ is a minimal pair, the only possibility is that $\b=\b'$ and $\ga=\ga'$, which can only occur if $m=1_\b+1_\ga$.

Let us write $L_{\b\ga}$ for $L(1_\b+1_\ga)$. Consider the composition
\[
 S_\b\otimes S_\ga \twoheadrightarrow L_{\b\ga} \hookrightarrow S_\ga\otimes S_\b \{\b\cdot \ga\}.
\]
Since $L_{\b\ga}$ appears only once in any composition series for $S_\b\otimes S_\ga$ or $S_\ga\otimes S_\b$, we have just shown that the kernel and cokernel of this composite morphism can only have $S_\a$ appearing in their composition series.

By inductive hypothesis, $[S_\b]=E_\b^*$ and $[S_\ga]=E_\ga^*$. Thus $E_\b^*E_\ga^*-q^{\b\cdot\ga}E_\ga^*E_\b^*$ is a multiple of $[S_\a]$.

At $q=1$, the specialisations of $E_\b$ and $E_\ga$ are nonzero vectors in the weight spaces $\mathfrak{g}_\b$ and $\mathfrak{g}_\ga$ respectively. Since $\b+\ga$ is a root, these specialisations do not commute. As $E_\b^*=(1-q_\b^2)E_\b$ and $E_\ga^*=(1-q_\ga^2)E_\ga$ this implies that $E_\b^* E_\ga^*-q^{\b\cdot\ga}E_\ga^*E_\b^*\neq 0$.

By the Levendorskii-Soibelman formula \cite[Proposition 5.5.2]{ls} and an argument similar to that we just used to prove $S_\b\otimes S_\ga$ has only two possible simple constituents, $E_\b^* E_\ga^*-q^{\b\cdot\ga}E_\ga^*E_\b^*$ is a multiple of $E_\a^*$. Thus we have shown that $[S_\a]$ is a nonzero multiple of $E_\a^*$.

Since $E_\a^*$ lies in an $\A$-basis of $\f_\A^*$, $[S_\a]\in \A E_\a^*$. Since $S_\a$ is simple, $[S_\a]$ is indivisible in $\f_\A^*$.
Thus $[S_\a]=\pm q^n E_\a^*$ for some integer $n$. Normalising the grading on $S_\a$ as in \cite{khovanovlauda} forces $[S_\a]$ to be bar-invariant. As $E_\a^*$ is also bar-invariant, this forces $n=0$. 

The rest of the proof is dedicated to removing the sign ambiguity. 
First we shall consider the case where our Cartan datum is symmetric. This allows us to make use of the geometric interpretation of KLR algebras due to \cite{vv,rouquier2}, which shows that $[S_\a]$ lies in the dual canonical basis. Since $E_\a^*$ is known to lie in the dual canonical basis, we're done in this case. 
%alt: kato remark 4.15

Now we return to the general case. We seek to massage our problem into one which is amenable to the technique of folding.

For $\ii = (i_1,\ldots, i_n)\in \Seq(\a)$, let $E_\ii= E_{i_1}\cdots E_{i_n}$. Note that for any representation $M$ of $R(\a)$, we have
$(E_{\ii},[M])=\dimq(e_{\ii}M)\in \N[q,q^{-1}]$. Since we already know $[S_\a]=\pm E_\a^*$, this means that the statement $[S_\a]=E_\a^*$ is equivalent
to having $(E_\ii,E^*_\a)\in \N[q,q^{-1}]$ for all $\ii\in \Seq(\a)$.

We now specialise $\f_\A$ and $\f_\A^*$ at $q=1$. Let $G$ be a Chevalley group corresponding to our Cartan datum, $N$ the unipotent radical of a Borel subgroup and $\n$ the Lie algebra of $N$. Then $\f_\A$ specialises to $U_\Z(\mathfrak{n})$, a $\Z$-form (with divided powers) of the universal enveloping algebra of $\n$, 
and $\f_\A^*$ specialises to $\Z[N]$, the affine coordinate ring of $N$. 
The canonical pairing $(\cdot,\cdot):\f_\A\times \f_\A^* \to \A $ specialises to the pairing
\begin{equation}\label{pairing}
 (\cdot,\cdot)\map{U_\Z(\mathfrak{n})\times \Z[N]}{\Z}, \ \quad (X,f)=(Xf)(1).
\end{equation}

For each positive root $\b$, let $X_\b$ denote the specialisation of $E_\b$ to $U_\Z(\n)$. If $\b$ is simple then $X_\b$ is the usual Chevalley generator, while for general $\b$, we have $X_\b\in \n_\b$.

Since $N$ is the product of its root subgroups, we can define a function
$Z_\b\in\Z[N]$ by
\[
 Z_\b\left(\prod_{i=1}^{N} \exp ( x_{\a_i}X_{\a_i} )\right)=x_\b.
\]

For any two positive roots $\b$ and $\ga$ we compute
\[
 (X_\ga,Z_\b)=\frac{d}{dt}\left. \left(Z_\b \exp(tX_\ga)\right ) \right |_{t=0}=\delta_{\b\ga}
\]
and thus $Z_\b$ is the specialisation of the dual PBW element $E_\b^*$.

Let $e_\ii$ be the product of the Chevalley generators that is the specialisation of $E_\ii$. Then the statement $[S_\a]=E_\a^*$ is now equivalent to the statement that $(e_\ii,Z_\a)\in \N$ for all $\ii\in \Seq(\a)$. It is this last statement that is amenable to folding.

From $(I,\cdot)$, we can construct a simply laced Cartan datum $(\tilde{I},\cdot)$ together with an automorphism $\sigma$ of $(\tilde{I},\cdot)$ such that $i\cdot j=0$ whenever $i$ and $j$ are in the same $\sigma$-orbit. We now discuss the relationsip between $(\tilde{I},\cdot)$ and $({I},\cdot)$ - throughout we use a tilde to denote a construction performed using $(\tilde{I},\cdot)$.

The set $I$ is equal to the set of orbits of $\sigma$ on $\tilde I$. The group $N$ appears as the fixed point set $(\tilde{N})^\sigma$ with a similar statement for $\n$. The relationship between the Chevalley generators is, for $\oo\in I$, 
\begin{equation}\label{expand}
 e_\oo = \sum_{i\in\oo}e_i.
\end{equation}
Given a long word decomposition for $(I,\cdot)$, one may obtain a long word decomposition for $(\tilde{I},\cdot)$ by replacing all occurrences of the simple reflection $s_\oo$ by $\prod_{i\in\oo} s_i$. The function $Z_\b$ on $N$ is now obtainable as the restriction of a similarly constructed function $\tilde{Z}_\ga$ on $\tilde N$.

If $H$ is a subgroup of $G$ with corresponding Lie algebras $\mathfrak{h}$ and $\mathfrak{g}$ respectively, then writing $i$ for the inclusion of $U_\Z(\mathfrak{h}) \to U_\Z(\mathfrak{g})$ and $\pi$ for the projection $\Z[G]\to \Z[H]$, the pairing (\ref{pairing}) behaves via $$(iX,f)=(X,\pi f).$$

%We wish to know that $(e_\ii,Z_\a)\in\N$ for all $i\in \Seq(\a)$.

Let $\ii\in\Seq(\a)$. We expand $e_\ii$ using (\ref{expand}) into a positive sum of $\tilde{e}_{\uj}$'s and write $Z_\a$ as the restriction of some $\tilde Z_\a$. This reduces the computation of $(e_\ii,Z_\a)$ into a positive sum of similar expressions for the simply laced Cartan datum $(\tilde{I},\cdot)$. Since positivity is already known in the simply laced case, we obtain $(e_\ii,Z_\a)\in\N$, and hence $[S_\a]=E_\a^*$, completing our proof.
\end{proof}
\section{Global Dimension of KLR Algebras}%%%%%%%%%%%%%%%%%%%%%%%
%%%%%%%%%%%%%%%%%%%%%%%%%%%%%%%%%%%%%%%%%%%%%%%%%%%%%%%%%%%%%%%%%

The main aim of this section is Theorem \ref{finitecd} which computes the global dimension of a finite type KLR algebra. We give a complete proof except in types $D$ and $E$ when the ground field $k$ is of positive characteristic. The extra computations needed to cover these final cases are included in \cite{bkm}. First we need some Lemmas.

\begin{lemma}\label{resgammabeta}
Let $\a$ be a positive root that is not simple and let $(\b,\ga)$ be a minimal pair for $\a$.
Then every simple subquotient of $\Res_{\ga\b}(S_\a)$ is isomorphic to $S_\ga \boxtimes S_\b$.
\end{lemma}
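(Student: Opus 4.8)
The plan is to analyze $\Res_{\ga,\b}(S_\a)$ using the Mackey filtration (Proposition \ref{mackey}) and the restriction estimates from Lemma \ref{ressa}, and then to pin down the support of any simple subquotient $L(m)$ by invoking the minimality of the pair $(\b,\ga)$ in the same way as in the proof of Theorem \ref{main}(1). Concretely, suppose $L(m)\boxtimes L(m')$ is a simple subquotient of $\Res_{\ga,\b}(S_\a)$. Then $|m|=\ga$ and $|m'|=\b$, and applying $\Res_{\ga}$ on the first factor and $\Res_{\b}$ on the second we see (by Lemma \ref{ressa} applied to $S_\a$ with $\Res_{\ga,\b}S_\a\neq 0$) that $\ga$ is a sum of roots $\le\a$ and $\b$ is a sum of roots $\ge\a$. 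The support of $m$ and the support of $m'$ together give a way of writing $\a=|m|+|m'|$ as a sum of positive roots; if this is not the trivial decomposition $\a=\b+\ga$, then by Lemma \ref{roots} we can group the constituents into two positive roots $\b',\ga'$ with $\b<\b'<\a<\ga'<\ga$ (using convexity, Lemma \ref{convex}, together with the fact that the roots in the support of $m$ are $\le\ga$ and those in the support of $m'$ are $\ge\b$), contradicting minimality of $(\b,\ga)$. Hence the only possibility is $m=1_\ga$, $m'=1_\b$, i.e. the simple subquotient is $S_\ga\boxtimes S_\b$.

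First I would set up the Mackey filtration for $\Res_{\ga,\b}\circ\Ind_{m_1\a_1,\dots,m_N\a_N}$ applied where relevant, but actually the cleaner route is the one above working directly from a simple subquotient of $\Res_{\ga,\b}(S_\a)$: every such subquotient has the form $L(m)\boxtimes L(m')$ since the $L$'s exhaust the simples (Theorem \ref{main}(3)), already available by induction for heights $<|\a|$ and applicable at height $|\a|$ for the factor supported on roots $<\a$ or $>\a$. Then I would invoke Lemma \ref{ressa}: since $S_\ga=L(1_\ga)$ requires $\Res_{\ga}L(m)\neq 0$ forcing constituents of $m$ to lie below $\ga$, and dually for $m'$. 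The combinatorial heart is the application of Lemma \ref{roots} plus convexity to produce the forbidden intermediate decomposition $\b',\ga'$.

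The main obstacle I anticipate is making the convexity bookkeeping airtight: after Lemma \ref{roots} splits $\a$ into $\b'+\ga'$, I must argue that the piece built from the "small" constituents is $\ge\b$ and the piece built from the "large" constituents is $\le\ga$ so that minimality of $(\b,\ga)$ genuinely applies — this needs that every root in the support of $m$ satisfies $\b\le\cdot\le\ga$ and likewise for $m'$, which in turn comes from combining Lemma \ref{ressa} (constituents of $\ga=|m|$ are $\le\a$; constituents of $\b=|m'|$ are $\ge\a$) with the inductive knowledge that $S_\a$ occurs between $\b$ and $\ga$, i.e. the analogue of Theorem \ref{main}(5) for the pieces. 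One subtle point: when $L(m)\boxtimes L(m')$ has $|m|=\ga$ or $|m'|=\b$ of height equal to $|\a|$ minus a simple root, or in degenerate cases where $\ga$ or $\b$ is itself one of the two constituents, I should check the edge case $m=1_\ga,\ m'=1_\b$ separately — but that is exactly the desired conclusion, so there is nothing to rule out there. Once the intermediate decomposition is excluded, the statement follows immediately.
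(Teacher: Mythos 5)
There is a genuine gap at the combinatorial heart of your argument: the claim that every root in the support of $m$ (and of $m'$) lies between $\b$ and $\ga$, which is what you need before minimality of $(\b,\ga)$ can be invoked. The tools you cite do not give this. Lemma \ref{ressa} (which you also quote with the two slots interchanged: for $\Res_{\ga\b}S_\a\neq 0$ it is the first weight $\ga$ that must be a sum of roots $\geq\a$ and the second weight $\b$ a sum of roots $\leq\a$) only yields bounds relative to $\a$: refining the restriction as in its proof, one gets that every root in the support of $m$ is $\geq\a$ and every root in the support of $m'$ is $\leq\a$, but nothing bounds the support of $m$ above by $\ga$ or the support of $m'$ below by $\b$. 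Theorem \ref{main}(5) cannot be invoked for this either: it constrains composition factors of the induced modules $\Delta(m)$ and $\nabla(m)$, not of a restriction such as $\Res_{\ga\b}(S_\a)$, and the paper proves no analogue for restrictions. Worse, the missing containment is essentially equivalent to the lemma itself: once the support of $m$ is known to lie in the interval between $\a$ and $\ga$, convexity (Lemma \ref{convex}) and a height count force $m=1_\ga$ outright, with no need for Lemma \ref{roots}. So the proposal assumes the hard part. (A smaller, patchable point: after Lemma \ref{roots} splits $\a=\b'+\ga'$ you must still rule out $\b'=\b$, $\ga'=\ga$ occurring with a nontrivial multiset, since minimality only forbids strict intermediates; the paper's proof of Theorem \ref{main}(1) shows how convexity handles this.)

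For comparison, the paper proves the lemma by a Grothendieck-group argument rather than a module-theoretic one: using $[S_\a]=E_\a^*$ it expands $[\Res_{\ga\b}(S_\a)]$ in the PBW basis of $\f\otimes\f$, extracts each coefficient as a pairing $(E_\a^*,\,E_{\ga_1}\cdots E_{\ga_l}E_{\b_1}\cdots E_{\b_k})$, and straightens the monomial via the Levendorskii--Soibelman formula; every monomial arising in the straightening has first factor $\leq\ga_1\leq\ga$ and last factor $\geq\b_k\geq\b$, so producing $E_\a$ forces an intermediate product $E_{\ga'}E_{\b'}$ with $\b\leq\b'$, $\ga'\leq\ga$, $\b'+\ga'=\a$, and minimality then forces $\ga_\bullet=(\ga)$, $\b_\bullet=(\b)$. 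It is precisely this LS-straightening step that supplies the two-sided bounds your argument lacks. If you want to keep a module-theoretic route you would need an independent proof of the support containment (this is done, with different tools, in \cite{bkm}); as written, your key step is unsupported.
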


\begin{proof}
 We expand the class of $\Res_{\ga\b}(S_\a)$ in the PBW basis.
\[
 [ \Res_{\ga\b}(S_\a) ]=\sum_{\substack{ \ga_1,\ldots,\ga_l \\ \b_1,\ldots,\b_k}} c_{\ga_\bullet \b_\bullet} E_{\ga_1}\cdots E_{\ga_l} \otimes E_{\b_1}\cdots E_{\b_k}.
\] where $\b_1\leq\cdots \leq \b_k$ and $\ga_1\leq\cdots\leq\ga_l$.

By Theorem \ref{main}(1), we can extract the coefficient $c_{\ga_\bullet \b_\bullet}$ by
\[
 c_{\ga_\bullet \b_\bullet}=([ \Res_{\ga\b}(S_\a) ],E_{\ga_1}\cdots E_{\ga_l} \otimes E_{\b_1}\cdots E_{\b_k})=(E_\a^*,E_{\ga_1}\cdots E_{\ga_l}E_{\b_1}\cdots E_{\b_k}).
\]
The Levendorskii-Soibelman formula gives us an algorithm for expanding the monomial $E_{\ga_1}\cdots E_{\ga_l}E_{\b_1}\cdots E_{\b_k}$ in the PBW basis. Each monomial $E_{\delta_1}\cdots E_{\delta_m}$ which appears at any point in this computation must have $\delta_1\leq \ga_1$ and $\delta_m\geq \b_k$.

By the orthogonality of the PBW basis, to obtain a contribution to this pairing, the term $E_\a$ must appear in the above expansion. The only way that $E_\a$ can appear via repeated application of the Levendorskii-Soibelman formula is if at some intermediate point in the computation, a monomial $E_{\ga'}E_{\b'}$ with $\ga'>\b'$ appeared. Necessarily $\b'+\ga'=\a$.

By our observation on the possible monomials which can appear, we must have $\ga'\leq \ga_1\leq \ga$ and $\b'\geq \b_k\geq \b$. By the assumption that the pair $(\b,\ga)$ is minimal, this implies that these inequalities are all equalities, and hence $\ga_\bullet =\ga$ and $\b_\bullet = \b$.

Thus the class of $\Res_{\ga\b}(S_\a)$ is a multiple of $E_\ga\otimes E_\b$, which is enough to prove this Lemma.
\end{proof}
%
%Recall that the global dimension of an algebra $A$ is the supremum of all integers $d$ for which there exist $A$-modules $M$ and $N$ with $\Ext^d(M,N)\neq 0$.

As in the previous section, all results that we prove will be proved simultaneously by an induction on $\nu$. 
Unlike the previous section, we often need to work with a special long word decomposition, matching that of \cite{hmm}. Since our main goal is to prove Theorem \ref{finitecd}, a statement which is independent of any long word decomposition, this shall not harm us. 

We now pause to explain the relevant contents of \cite{hmm}. 
For each positive root $\a$, let $\ii_\a$ be the corresponding good Lyndon word in $\Seq(\a)$ via the bijection of \cite[Proposition 2.3.5]{hmm}. Considering the lexicographic ordering on these good Lyndon words induces a convex ordering on the set of positive roots.
In \cite{hmm}, the set of cuspidal representations (except for the highest root in $E_8$ which doesn't concern us) are explicitly computed for this choice of ordering on $\Phi^+$, which we call the HMM-ordering.
%From the proof of Theorem \ref{main}(1), we saw that $S_\b\otimes S_\ga$ has a unique simple quotient $L_{\b\ga}$ and that the kernel was isotypic. In fact more is true:

In the next few Lemmas, we restrict ourselves to types $B$, $C$, $F$ and $G$ since it will transpire that we have a way of bypassing these Lemmas in the simply-laced case. There is no mathematical reason to make this restriction, we could include all finite type root systems if needed at the cost of checking more cases in the proofs (and the alert reader will notice that we do also provide proofs in type $A$). 

\begin{lemma}\label{ses}
Suppose our KLR algebras are of type $B$, $C$, $F$ or $G$ and that we are working with the HMM-ordering.
Let $\a$ be a positive root that is not simple and let $(\b,\ga)$ be the minimal pair for $\a$ described in the appendix. Then there are short exact sequences 
\begin{eqnarray}\label{ses1}
 &0\rightarrow L_{\b\ga} \ra S_\ga \otimes S_\b \ra S_\a \ra 0, \\
 \label{ses2} &0\ra S_\a \ra S_\b\otimes S_\ga \ra L_{\b\ga}\ra 0.
\end{eqnarray}
%In the simply laced case, we furthermore that 
%\[
% \Res_{\ga,\b}(S_\a)=S_\ga\boxtimes S_\b.
%\]
\end{lemma}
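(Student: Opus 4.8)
The strategy is to build the short exact sequence (\ref{ses1}) out of the composition
\[
S_\ga \otimes S_\b \twoheadrightarrow L_{\ga\b}' \hookrightarrow S_\b \otimes S_\ga \{\ga\cdot\b\},
\]
where the surjection is onto the head of $S_\ga \otimes S_\b$ and the injection comes from the coinduction adjunction, and to argue that this head equals $S_\a$ and the kernel equals $L_{\b\ga}$. Recall from the proof of Theorem \ref{main}(1) that the only simple constituents appearing in $S_\b \otimes S_\ga$ (equivalently, in $S_\ga \otimes S_\b$) are $L_{\b\ga} = L(1_\b+1_\ga)$ and $S_\a$, each with multiplicity one. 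So the first point to establish is that $\Res_{\ga\b}(S_\ga \otimes S_\b)$ and $\Res_{\ga\b}(S_\a)$ both contain $S_\ga \boxtimes S_\b$; the former is immediate from the Mackey filtration (Proposition \ref{mackey}) together with Lemma \ref{ressa}, and the latter is precisely Lemma \ref{resgammabeta}, which also tells us that $\Res_{\ga\b}(S_\a)$ is \emph{semisimple} with all constituents $\cong S_\ga\boxtimes S_\b$. Since $\Res_{\ga\b}(L_{\b\ga}) \boxtimes$-decomposes by Lemma \ref{restriction} into factors supported on roots strictly between $\b$ and $\ga$, it cannot contain $S_\ga\boxtimes S_\b$, so $\Res_{\ga\b}(S_\ga\otimes S_\b)$ contains $S_\ga\boxtimes S_\b$ with multiplicity exactly one, coming entirely from the $S_\a$-constituent.

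Next I would use this to pin down the head and socle of $S_\ga \otimes S_\b$. By adjunction, a surjection $S_\ga \otimes S_\b \twoheadrightarrow L(m)$ forces $S_\ga\boxtimes S_\b \hookrightarrow \Res_{\ga\b}L(m)$, and since $S_\ga\boxtimes S_\b$ occurs only once in $\Res_{\ga\b}(S_\ga\otimes S_\b)$, there is at most one such quotient; the same argument with $\Coind$ shows there is at most one simple submodule into which $S_\ga\boxtimes S_\b$ embeds under restriction. The constituent carrying the $S_\ga\boxtimes S_\b$ weight space is $S_\a$ (by Lemma \ref{resgammabeta} it is certainly not $L_{\b\ga}$), so I claim $S_\ga\otimes S_\b$ has simple head $S_\a$ and the complementary constituent $L_{\b\ga}$ sits as a submodule — in other words we get exactly the short exact sequence (\ref{ses1}). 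The only thing needing care is to rule out $S_\ga\otimes S_\b$ being semisimple $S_\a \oplus L_{\b\ga}$, or having $S_\a$ as a submodule rather than a quotient; here is where the HMM-ordering and the explicit minimal pair from the appendix enter, via a dimension/character count or a direct examination of the cuspidal modules, to show that $\Hom(S_\ga\otimes S_\b, S_\a)\neq 0$ (equivalently that $S_\a$ is the head). Granting that, exactness of $\Res$ and the multiplicity-one facts force the kernel to be $L_{\b\ga}$.

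Finally, (\ref{ses2}) follows from (\ref{ses1}) by a duality/adjunction argument: applying $\Coind_{\b\ga}$-type reasoning, or dualizing via the bar involution together with Proposition \ref{characterinjective}, interchanges the roles of $S_\b\otimes S_\ga$ and $S_\ga\otimes S_\b$ and of head and socle, so the sub $L_{\b\ga}$ of $S_\ga\otimes S_\b$ becomes a quotient $L_{\b\ga}$ of $S_\b\otimes S_\ga$ with kernel $S_\a$. One checks the grading shift $\{\b\cdot\ga\}$ disappears after normalization because all of $[S_\a]$, $[S_\b]$, $[S_\ga]$, $[L_{\b\ga}]$ are bar-invariant, so the sequence (\ref{ses2}) holds without shift. \textbf{The main obstacle} I anticipate is precisely the head-versus-socle dichotomy for $S_\ga\otimes S_\b$: the multiplicity-one bookkeeping via $\Res_{\ga\b}$ shows $S_\a$ and $L_{\b\ga}$ are the only constituents and that the extension is non-split, but to know that $S_\a$ is the \emph{quotient} (not the sub) one genuinely seems to need the concrete description of the minimal pair $(\b,\ga)$ and the cuspidal modules from \cite{hmm} in each of types $B$, $C$, $F$, $G$ — this is the reason the lemma is stated only for those types and for the HMM-ordering.
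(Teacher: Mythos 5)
Your argument has a genuine gap, and it sits exactly where the lemma's content lies. You begin by asserting, citing the proof of Theorem \ref{main}(1), that $S_\b\otimes S_\ga$ has constituents $L_{\b\ga}$ and $S_\a$ ``each with multiplicity one.'' That proof only gives that $L_{\b\ga}$ occurs once and that every other constituent is isomorphic to $S_\a$; bounding the multiplicity of $S_\a$ by one is precisely what this lemma must establish. Your attempt to recover it via restriction does not work: the claim that $\Res_{\ga\b}(S_\ga\otimes S_\b)$ contains $S_\ga\boxtimes S_\b$ with multiplicity exactly one is not ``immediate from Proposition \ref{mackey} together with Lemma \ref{ressa}.'' The Mackey filtration of $\Res_{\ga\b}\Ind_{\ga\b}$ has, besides the trivial layer $S_\ga\boxtimes S_\b$, layers indexed by nontrivial splittings which Lemma \ref{ressa} does not kill; for instance in $G_2$ with $\a=2\a_0+\a_1$, $\b=\a_0+\a_1$, $\ga=\a_0$, the layer built from $\Res_{\a_0,\a_1}S_\b\neq 0$ survives. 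Worse, the claim is actually false in general: the multiplicity of $S_\ga\boxtimes S_\b$ in $\Res_{\ga\b}S_\a$ alone can be as large as three (this is exactly why the proof of Lemma \ref{minpairext} analyses composition lengths two and three), and since $S_\a$ does occur in $S_\ga\otimes S_\b$ and $\Res_{\ga\b}$ is exact, in those cases $S_\ga\boxtimes S_\b$ occurs at least twice in $\Res_{\ga\b}(S_\ga\otimes S_\b)$, so your uniqueness-of-head argument collapses. Two smaller misreadings feed this: Lemma \ref{resgammabeta} says $\Res_{\ga\b}S_\a$ is isotypic, not semisimple, and Lemma \ref{restriction} says nothing about $\Res_{\ga\b}L_{\b\ga}$, because $\Res_{\ga\b}$ lists the roots in decreasing order and hence is not of the form $\Res_n$ treated there.

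You also locate the appendix/HMM input in the wrong place. The head-versus-socle dichotomy you single out as the main obstacle is already settled: $S_\b\otimes S_\ga=\Delta(1_\b+1_\ga)$ has head $L_{\b\ga}$ by Theorem \ref{main}(2), and $S_\ga\otimes S_\b$ is $\nabla(1_\b+1_\ga)$ up to a shift, with socle $L_{\b\ga}$ by Theorem \ref{main}(4). What the explicit minimal pairs and the character tables of \cite{hmm} are needed for is the multiplicity bound: one checks case by case that $\dimq e_{\ii_\a}(S_\b\otimes S_\ga)=\dimq e_{\ii_\a}S_\a$, so $S_\a$ can occur at most once; since the proof of Theorem \ref{main}(1) shows it occurs at least once, the composition series has length two, and then (\ref{ses2}) is forced by \ref{main}(2) and (\ref{ses1}) by \ref{main}(4). (Also note the paper does not claim the grading shifts vanish; it simply suppresses them.) So the architecture of your proof needs to be reorganised around this $\ii_\a$-weight-space count rather than around multiplicity bookkeeping for $\Res_{\ga\b}$.
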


Technically speaking, there are grading shifts to be added to these two short exact sequences. Since we shan't need to know precisely what they are, we shall ignore them. 

\begin{remark}
 This result is in fact true for all convex ordering and all minimal pairs in all finite type KLR algebras. This is proved in \cite[Theorem 4.7]{bkm}, where the appropriate grading shifts are also included.
\end{remark}

\begin{proof}

By the argument in the proof of Theorem \ref{main}(1), we know that $S_\b\otimes S_\ga$ has a unique simple quotient $L_{\b\ga}$ and that the other simple representations in any composition series are all isomorphic to $S_\a$.

An inspection of the tables in the appendix together with the computation of the characters of each cuspidal module from \cite[\S 5]{hmm} shows that in each case, the dimension of the $\ii_\a$ weight space of $S_\a$ is equal to the dimension of the $\ii_\a$ weight space of $S_\b\otimes S_\ga$. Hence $S_\a$ can only appear once in a composition series of $S_\b\otimes S_\ga$, proving the Lemma.
\end{proof}

%We will apply $\Hom(-,S_\a)$ to these short exact sequences to obtain two long exact sequences of Ext groups.

%To analyse these sequences, we need to know.

\begin{lemma}\label{minpairext} Suppose our KLR algebras are of type $B$, $C$, $F$ or $G$ and that we are working with the HMM-ordering.
Let $\a$ be a positive root that is not simple, and let $(\b,\ga)$ be the minimal pair for $\a$ given in the tables in the appendix. Then $$\dim\Ext^i(S_\ga\otimes S_\b,S_\a)=
\begin{cases}
1 \mbox{ if } i=0,2,\\
2 \mbox{ if } i=1,\\ 
0 \mbox{ if } i\geq 3.
\end{cases}
$$
\end{lemma}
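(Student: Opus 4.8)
The idea is to feed the two short exact sequences of Lemma \ref{ses} into the long exact sequence of $\Ext^\bullet(-,S_\a)$ and bootstrap, using that $S_\a$, $S_\b\otimes S_\ga$ and $S_\ga\otimes S_\b$ all behave well under restriction to $R(\ga)\otimes R(\b)$. First I would compute $\Hom$ and $\Ext^1$ directly: applying $\Ext^\bullet(-,S_\a)$ to (\ref{ses1}) gives a long exact sequence
\[
0\to\Hom(S_\a,S_\a)\to\Hom(S_\ga\otimes S_\b,S_\a)\to\Hom(L_{\b\ga},S_\a)\to\Ext^1(S_\a,S_\a)\to\cdots,
\]
and since $L_{\b\ga}\not\cong S_\a$ we have $\Hom(L_{\b\ga},S_\a)=0$, so $\Hom(S_\ga\otimes S_\b,S_\a)$ is one-dimensional, giving the $i=0$ case. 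To handle higher $\Ext$ I would pass to the restriction: by Corollary \ref{extadjunction}, $\Ext^i(S_\ga\otimes S_\b,S_\a)\cong\Ext^i(S_\ga\boxtimes S_\b,\Res_{\ga\b}(S_\a))$, and by Lemma \ref{resgammabeta} every simple subquotient of $\Res_{\ga\b}(S_\a)$ is $S_\ga\boxtimes S_\b$; combined with the explicit character computation from \cite{hmm} (which was already used to prove Lemma \ref{ses}, and which pins down $\dimq\Res_{\ga\b}(S_\a)$) one identifies $\Res_{\ga\b}(S_\a)$ as a specific self-extension-free or length-bounded module built from copies of $S_\ga\boxtimes S_\b$. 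Then a Künneth-type argument reduces $\Ext^i(S_\ga\boxtimes S_\b,S_\ga\boxtimes S_\b)$ to $\Ext^\bullet(S_\ga,S_\ga)\otimes\Ext^\bullet(S_\b,S_\b)$, and by the inductive hypothesis applied to Theorem \ref{finitecd} (or rather its consequences for $S_\b,S_\ga$ at smaller height, e.g. that $\End(S_\ga)=k$ and the relevant $\Ext$ groups vanish or are one-dimensional), this gets controlled.

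\textbf{Carrying out the induction.} The cleaner route is probably to feed both (\ref{ses1}) and (\ref{ses2}) into long exact sequences and play them against each other. From (\ref{ses2}), $\Ext^\bullet(-,S_\a)$ gives
\[
\cdots\to\Ext^i(L_{\b\ga},S_\a)\to\Ext^i(S_\b\otimes S_\ga,S_\a)\to\Ext^i(S_\a,S_\a)\to\Ext^{i+1}(L_{\b\ga},S_\a)\to\cdots,
\]
and one knows $\Ext^i(S_\b\otimes S_\ga,S_\a)=0$ for $i>0$ by adjunction and Lemma \ref{restriction}: indeed $\Res_{\b\ga}(S_\a)$ involves only roots $\geq\a$ in the first slot and $\leq\a$ in the second (Lemma \ref{ressa}), and by convexity and minimality of $(\b,\ga)$ the only simple that can occur is forced, but then a degree/weight-space count using the HMM characters shows $\Res_{\b\ga}(S_\a)$ has no $S_\b\boxtimes S_\ga$ composition factor at all (this is exactly the asymmetry between $\Res_{\b\ga}$ and $\Res_{\ga\b}$), so $\Ext^{>0}(S_\b\otimes S_\ga,S_\a)=0$. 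Feeding this in, the long exact sequence from (\ref{ses2}) yields $\Ext^i(S_\a,S_\a)\cong\Ext^{i+1}(L_{\b\ga},S_\a)$ for $i\geq 1$. Then using the long exact sequence from (\ref{ses1}),
\[
\cdots\to\Ext^{i-1}(S_\a,S_\a)\to\Ext^i(S_\ga\otimes S_\b,S_\a)\to\Ext^i(L_{\b\ga},S_\a)\to\Ext^i(S_\a,S_\a)\to\cdots,
\]
and the earlier identification gives a recursion expressing $\Ext^i(S_\ga\otimes S_\b,S_\a)$ in terms of $\Ext^{i-1}(S_\a,S_\a)$ and $\Ext^i(L_{\b\ga},S_\a)$. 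Since $S_\a$ sits at height $|\a|$ strictly smaller than the $\nu$ of the overall induction only in the sense that the global-dimension statement Theorem \ref{finitecd} is known for all smaller heights — here one wants the self-extension data of $S_\a$ itself, so I would instead derive it from the two sequences and the known vanishing, closing the loop: the system of long exact sequences, once $\Ext^{>0}(S_\b\otimes S_\ga,S_\a)=0$ is established and the $i=0$ terms are computed, is rigid enough to force $\dim\Ext^i(S_\ga\otimes S_\b,S_\a)=1,2,1,0,0,\dots$ for $i=0,1,2,3,\dots$, with the $2$ in degree $1$ coming from the two maps into $\Ext^1$ (one from $\Ext^0(L_{\b\ga},S_\a)$-side and one from $\Ext^0(S_\a,S_\a)$) and the vanishing in degrees $\geq 3$ coming from $\gldim R(\b)\otimes R(\ga)=|\b|+|\ga|=|\a|$ bounding the relevant $\Ext$ of the pieces, together with Corollary \ref{extadjunction}.

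\textbf{Main obstacle.} The delicate point is the vanishing $\Ext^{>0}(S_\b\otimes S_\ga,S_\a)=0$, equivalently $\Ext^{>0}(S_\b\boxtimes S_\ga,\Res_{\b\ga}S_\a)=0$. Lemma \ref{ressa} and convexity tell us which roots \emph{can} appear in $\Res_{\b\ga}S_\a$, and minimality of $(\b,\ga)$ narrows it, but to get an actual $\Ext$-vanishing (not just a bound on composition factors) I expect to need the explicit module structure of $\Res_{\b\ga}S_\a$ — which is where the HMM tables and the character computations of \cite[\S 5]{hmm} are unavoidable, and is precisely why this lemma is stated only in types $B,C,F,G$ with the HMM-ordering. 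Concretely, one checks case by case from the appendix that $\Res_{\b\ga}S_\a$ is either zero or a direct sum of shifts of a single simple $S_\b\boxtimes S_\ga$ with no self-extensions in the relevant degrees — and then a Künneth argument over the field $k$, using that $S_\b$ and $S_\ga$ are absolutely irreducible with $\End=k$ (which follows from Theorem \ref{main} at smaller height), finishes it. The bookkeeping of grading shifts, which we suppress, is the only other nuisance.
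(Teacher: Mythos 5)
Your proposal contains a genuine gap, in two places. First, the ``cleaner route'' is circular: it needs the self-extension data $\Ext^i(S_\a,S_\a)$ for the root $\a$ itself, which at this stage of the induction is precisely what is \emph{not} yet available (in the paper it is Proposition \ref{cuspidalselfext} for $\a$, and it is deduced \emph{from} the present lemma together with Lemma \ref{nonformal}). You acknowledge this and then assert that the two long exact sequences, plus $\Ext^{\bullet}(S_\b\otimes S_\ga,S_\a)=0$ and the degree-zero terms, are ``rigid enough'' to force the answer; they are not. Writing $a_i=\dim\Ext^i(S_\a,S_\a)$, $b_i=\dim\Ext^i(L_{\b\ga},S_\a)$, $c_i=\dim\Ext^i(S_\ga\otimes S_\b,S_\a)$, the sequences only give $b_{i+1}=a_i$ and exactness constraints in which the $a_i$ ($i\ge 1$) and the connecting maps are free parameters, so the $c_i$ are undetermined. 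Breaking this degeneracy is exactly why the paper proves the present lemma by an independent computation and then separately proves the non-formal Lemma \ref{nonformal} (an explicit element calculation) to control the connecting map $\Ext^1(S_\ga\otimes S_\b,S_\a)\to\Ext^1(L_{\b\ga},S_\a)$. (Also, the vanishing $\Ext^{\bullet}(S_\b\otimes S_\ga,S_\a)=0$ holds in all degrees and is formal: $\Res_{\b\ga}S_\a=0$ by Lemma \ref{ressa} and convexity, with no character input; the HMM characters are needed elsewhere, namely to bound the length of $\Res_{\ga\b}S_\a$.)

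Second, your first route is indeed the paper's route (adjunction, Lemma \ref{resgammabeta}, the character bound, K\"unneth, and induction via Proposition \ref{cuspidalselfext} for $\b$ and $\ga$), but your structural claim about the restriction is wrong and would break the argument: $\Res_{\ga\b}S_\a$ is in general \emph{not} a direct sum of grading shifts of $S_\ga\boxtimes S_\b$; in the tabulated cases it can have length two or three, and if it were split the lemma itself would fail, since by adjunction $\Hom(S_\ga\otimes S_\b,S_\a)$ would then be two- or three-dimensional rather than one-dimensional. The heart of the paper's proof is exactly the non-split case: Lemma \ref{ses} gives $\dim\Hom(S_\ga\otimes S_\b,S_\a)=1$, which forces the self-extensions occurring inside $\Res_{\ga\b}S_\a$ to be non-split, and then one uses that in $\Ext^{\ast}(S_\ga\boxtimes S_\b,S_\ga\boxtimes S_\b)\cong k[x]/(x^2)\otimes k[y]/(y^2)$ (known by induction) multiplication by any nonzero degree-one class surjects onto the degree-two part, to compute the Ext groups of the length-two and length-three modules. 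None of this non-splitness analysis appears in your sketch, so the cases where $\Res_{\ga\b}S_\a$ has length greater than one are not covered.
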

\begin{proof}
An inspection of the tables in the appendix together with the character computations of \cite{hmm} shows that in each case, the multiplicity of $S_\ga \boxtimes S_\b$ in $\Res_{\ga\b}S_\a$ is at most three. Recall that Lemma \ref{resgammabeta} tells us that every simple subquotient of $\Res_{\ga\b}S_\a$ is isomorphic to $S_\ga \boxtimes S_\b$.

First consider the case where $\Res_{\ga\b}S_\a\cong S_\ga \boxtimes S_\b$. Then by adjunction,
\[
 \Ext^*(S_\ga\otimes S_\b,S_\a)=\Ext^*(S_\ga \boxtimes S_\b,S_\ga \boxtimes S_\b)=\Ext^*(S_\ga,S_\ga)\otimes \Ext^*(S_\b,S_\b).
\]

By an induction on the height of the root applied to Proposition \ref{cuspidalselfext}, both $\Ext^*(S_\ga,S_\ga)$ and $\Ext^*(S_\b,S_\b)$ are isomorphic to $k[x]/(x^2)$ with $x$ in homological degree one, proving the Lemma in this case.

Now consider the case where $\Res_{\ga\b}S_\a$ has a composition series of length two. Thus it lies in a short exact sequence
\begin{equation}\label{2ext}
 0\to S_\ga\boxtimes S_\b \to \Res_{\ga\b}S_\a \to S_\ga\boxtimes S_\b \to 0. 
\end{equation}

By Lemma \ref{ses}, $\Hom(S_\ga\otimes S_\b,S_\a)$ is one dimensional. By adjunction this implies $\Hom(S_\ga \boxtimes S_\b,\Res_{\ga\b}S_\a)$ is one dimensional and hence this short exact sequence does not split.

Consider the long exact sequence obtained by applying $\Hom(S_\ga\boxtimes S_\b,-)$ to the sequence (\ref{2ext}).
The boundary map $\delta\map{\Ext^1(S_\ga\boxtimes S_\b,S_\ga\boxtimes S_\b)}{\Ext^2(S_\ga\boxtimes S_\b,S_\ga\boxtimes S_\b)}$ is given by multiplication by the class of the extension (\ref{2ext}).
Again, by inductive hypothesis applied to Proposition \ref{cuspidalselfext}, $\Ext^*(S_\ga\boxtimes S_\b,S_\ga\boxtimes S_\b)\cong\Ext^*(S_\ga,S_\ga)\otimes \Ext^*(S_\b,S_\b)\cong k[x]/(x^2) \otimes k[y]/(y^2)$ with $x$ and $y$ in degree one. In this algebra, multiplication by any nonzero degree one element surjects onto the degree two piece. Since the extension (\ref{2ext}) is non-split, this implies that the boundary map is surjective.

It is now a routine procedure to extract the statement of this Lemma from the long exact sequence under consideration using the results we have just proved.

Finally, let us consider the case where $\Res_{\ga\b}S_\a$ has a composition series of length three. Then there are short exact sequences
\begin{equation}\label{xext}
 0\to X \to \Res_{\ga\b}S_\a \to S_\ga\boxtimes S_\b \to 0
\end{equation} and
\begin{equation*}\label{yext}
 0\to S_\ga\boxtimes S_\b \to \Res_{\ga\b}S_\a \to Y \to 0
\end{equation*} where $X$ and $Y$ are self-extensions of $S_\ga\boxtimes S_\b$.

By adjunction $\Hom(\Res_{\ga\b}S_\a,S_\ga\boxtimes S_\b)=\Hom(S_\a,S_\b\otimes S_\ga)$ and Lemma \ref{ses} implies that this latter space is one dimensional.
Thus $Y$ is a non-split self-extension of $S_\ga\boxtimes S_\b$. Similarly $X$ is a non-split self-extension of $S_\ga\boxtimes S_\b$.

We apply $\Hom(S_\ga\boxtimes S_\b,-)$ to the sequence (\ref{xext}) to obtain a long exact sequence of Ext groups. Since $X$ is non-split, we can use the argument from the length two case to deduce the dimensions of each group $\Ext^i(S_\ga \boxtimes S_\b,X)$. Again, to deduce our Lemma, it will suffice to show that the boundary map
\[
 \Ext^1(S_\ga\boxtimes S_\b,S_\ga\boxtimes S_\b)\to \Ext^2(S_\ga\boxtimes S_\b,X)
\] is surjective.

The surjection $X\to S_\ga\boxtimes S_\b$ is known to induce an isomorphism
\[
 \Ext^2(S_\ga\boxtimes S_\b,X)\isomto \Ext^2(S_\ga\boxtimes S_\b,S_\ga\boxtimes S_\b).
\]
Hence it suffices to show that the composite map
\[
 \Ext^1(S_\ga\boxtimes S_\b,S_\ga\boxtimes S_\b)\to \Ext^2(S_\ga\boxtimes S_\b,S_\ga\boxtimes S_\b)
\]
is surjective. This composite map is given by multiplication by the class of (\ref{xext}) in $\Ext^1(S_\ga\boxtimes S_\b,X)$ followed by composition with the surjection $f:X\to S_\ga\boxtimes S_\b$. The resulting class in $\Ext^1(S_\ga\boxtimes S_\b,S_\ga\boxtimes S_\b)$ which is being multiplied by to produce our composite map is the class of the sequence
\[
 0\to S_\ga\boxtimes S_\b \to \Res_{\ga\b}S_\a/\mbox{ker}(f)\to S_\ga\boxtimes S_\b\to 0,
\] which is the class of 
the self-extension $Y$. Since this extension is non-split, the same argument as in the length two case establishes our desired surjectivity.
\end{proof}

\begin{lemma}\label{nonformal}
Suppose our KLR algebras are of type $B$, $C$, $F$ or $G$ and that we are working with the HMM-ordering.
Let $\a$ be a positive root that is not simple, and let $(\b,\ga)$ be the minimal pair for $\a$ given in the tables in the appendix.
Then the canonical map $\Ext^1(S_\ga\otimes S_\b,S_\a)\to \Ext^1(L_{\b\ga},S_\a)$ is nonzero. 
\end{lemma}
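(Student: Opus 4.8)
The plan is to exploit the short exact sequence (\ref{ses1}), namely $0\to L_{\b\ga}\to S_\ga\otimes S_\b\to S_\a\to 0$, together with the $\Ext$ computations of Lemma \ref{minpairext} and the inductive hypothesis on self-extensions of cuspidals (Proposition \ref{cuspidalselfext}). Applying $\Hom(-,S_\a)$ to (\ref{ses1}) produces a long exact sequence
\[
0\to\Hom(S_\a,S_\a)\to\Hom(S_\ga\otimes S_\b,S_\a)\to\Hom(L_{\b\ga},S_\a)\to\Ext^1(S_\a,S_\a)\to\Ext^1(S_\ga\otimes S_\b,S_\a)\to\Ext^1(L_{\b\ga},S_\a)\to\cdots
\]
and the map I need to analyse is precisely the last one displayed. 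So the strategy is to pin down enough of the surrounding terms to force this map to be nonzero, equivalently to show the connecting map $\Ext^1(S_\a,S_\a)\to\Ext^1(S_\ga\otimes S_\b,S_\a)$ is \emph{not} surjective.

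First I would identify the small-degree terms: $\Hom(S_\a,S_\a)=k$ since $S_\a$ is simple (Schur's lemma, using that the grading forces scalars), and by the inductive hypothesis applied to $S_\a$ via Proposition \ref{cuspidalselfext}, $\Ext^1(S_\a,S_\a)$ is one-dimensional. By Lemma \ref{ses} (specifically the existence and uniqueness behind (\ref{ses1})) $\Hom(S_\ga\otimes S_\b,S_\a)=k$, and by Lemma \ref{minpairext} $\Ext^1(S_\ga\otimes S_\b,S_\a)$ is two-dimensional. Feeding this into the long exact sequence: the first map $\Hom(S_\a,S_\a)\to\Hom(S_\ga\otimes S_\b,S_\a)$ is injective between one-dimensional spaces hence an isomorphism, so $\Hom(L_{\b\ga},S_\a)\hookrightarrow\Ext^1(S_\a,S_\a)$; then since $\Ext^1(S_\a,S_\a)$ is one-dimensional, the image of the connecting map $\Ext^1(S_\a,S_\a)\to\Ext^1(S_\ga\otimes S_\b,S_\a)$ is at most one-dimensional, so it cannot be surjective onto the two-dimensional space $\Ext^1(S_\ga\otimes S_\b,S_\a)$. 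Exactness then forces the map $\Ext^1(S_\ga\otimes S_\b,S_\a)\to\Ext^1(L_{\b\ga},S_\a)$ to have nonzero image, which is the claim.

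The main obstacle I anticipate is making sure the input facts are legitimately available at this stage of the induction: I am invoking Proposition \ref{cuspidalselfext} (that $\Ext^\bullet(S_\delta,S_\delta)\cong k[x]/(x^2)$ with $x$ in homological degree one) for roots of height up to $|\a|$ including $\a$ itself, and Lemma \ref{ses} and Lemma \ref{minpairext} for the pair $(\b,\ga)$; all of these are part of the simultaneous induction on $\nu$, so I would make explicit that the height of $\a$ is the current induction step and that $\a$, $\b$, $\ga$ are all covered. A secondary point to handle cleanly is the dimension count for $\Hom(L_{\b\ga},S_\a)$: it could a priori be $0$ or $1$, but this does not affect the argument since we only use that it embeds into the one-dimensional $\Ext^1(S_\a,S_\a)$, bounding the image of the connecting map. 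One should also keep the implicit grading shifts in (\ref{ses1}) in mind, but since we are only tracking dimensions of $\Ext$ groups (which are graded vector spaces, and we want nonvanishing, not a precise degree), the shifts are harmless.
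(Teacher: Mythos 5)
There is a genuine gap: your argument is circular within the paper's induction. The only nontrivial input you use beyond Lemmas \ref{ses} and \ref{minpairext} is that $\Ext^1(S_\a,S_\a)$ is one-dimensional, i.e.\ Proposition \ref{cuspidalselfext} \emph{for the root $\a$ itself}, and you acknowledge invoking it ``including $\a$ itself'' as part of the simultaneous induction. But at the stage where Lemma \ref{nonformal} must be proved, Proposition \ref{cuspidalselfext} is only available for roots of strictly smaller height (that is how it is used in Lemma \ref{minpairext}, for $\b$ and $\ga$); the $\Ext^1$ statement for $\a$ is deduced \emph{from} Lemma \ref{nonformal}: the paper applies $\Hom(-,S_\a)$ to (\ref{ses1}), uses the surjectivity coming from Lemma \ref{nonformal} together with $\dim\Ext^1(L_{\b\ga},S_\a)=1$ (from (\ref{dimshift})) and $\dim\Ext^1(S_\ga\otimes S_\b,S_\a)=2$ to conclude $\dim\Ext^1(S_\a,S_\a)=1$. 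Indeed, given Lemma \ref{minpairext} and the sequences of Lemma \ref{ses}, the statement ``$\dim\Ext^1(S_\a,S_\a)=1$'' and the statement of Lemma \ref{nonformal} are essentially equivalent via exactly the long exact sequence you write down, so your proof assumes a reformulation of what is to be proved. There is no independent route to $\dim\Ext^1(S_\a,S_\a)\le 1$ at this point either: the Euler-form computation at the end of the section needs finite global dimension and the vanishing of higher self-extensions, both of which sit downstream of this lemma in types $B$, $C$, $F$, $G$.

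This is why the paper's proof does something different in kind: it constructs an explicit class. Taking a nonsplit self-extension $M_\ga$ of $S_\ga$ (legitimate, since $\ga$ has smaller height), it forms $X=(M_\ga\otimes S_\b)/L_{\b\ga}$, which by Lemma \ref{ses} is an extension of $S_\ga\otimes S_\b$ by $S_\a$, and then uses the appendix data --- an element $xe_{\ii_\a}$ acting nontrivially on $e_{\ii_\a}X$ through a weight $\jj$ with $e_{\jj}S_\a=0$ --- to show $L_{\b\ga}$ is not a submodule of $X$, so the image of $[X]$ in $\Ext^1(L_{\b\ga},S_\a)$ is nonzero. Some such case-by-case, representation-theoretic input (this is precisely what the tables are for) cannot be avoided by the homological bookkeeping alone; to repair your argument you would need an independent proof that $\Ext^1(S_\a,S_\a)$ is at most one-dimensional, which is not available at this point of the induction.
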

\begin{proof}
%Again, without loss of generality, we may assume our Cartan datum is irreducible.
By inductive hypothesis applied to Proposition \ref{cuspidalselfext}, there exists a non-trivial extension $M_\ga$ of $S_\ga$ by $S_\ga$.
There are canonical injections $L_{\b\ga}\to S_\ga\otimes S_\b\to M_{\ga}\otimes S_\b$. Let $X$ be the quotient of $M_\ga\otimes S_\b$ by $L_{\b\ga}$. Using Lemma \ref{ses}, this is an extension of $S_{\ga}\otimes S_\b$ by $S_\a$.

We will find an element $x\in R(\a)$ and a word $\j\in \seq(\a)$ such that the action of $xe_{\ii_\a}$ on $X$ is a non-trivial map $e_{\ii_\a}X\to e_{\ii_\a}X$ factoring through a weight space $e_\j X$. The choice of word $\j$ will be such that $e_\j S_\a=0$. This makes it impossible for $L_{\b\ga}$ to be a submodule of $X$.

The image of the class of $X$ in $\Ext^1(L_{\b\ga},S_\a)$ is the kernel of the canonical map from $X$ to $S_\a$. If this image were zero, then $L_{\b\ga}$ would be a submodule of this kernel and hence a submodule of $X$, a contradiction.

There is a symmetry between $\b$ and $\ga$ in the above discussion, in that we could have equally well worked with a non-trivial self-extension of $S_\b$.

The table in the appendix lists the choice of $\b$ or $\ga$ for which the self-extension is used in this contruction, together with the desired element $x\in R(\a)$ and the weight $\j$. We give an en example of the computation required to show that the action of $xe_\ii$ has the appropriate properties.

Let us consider the second row in the F4 table. The only property we need to show which is not obvious is that the action of $xe_{\ii_\a}$ on $X$ is nontrivial. From the defining relations of the KLR algebra, we compute
\[
 xe_{\ii_\a}=\phi_1\phi_2\phi_3^2\phi_2\phi_1e_{1012}=(\phi_1\phi_2^2\phi_1y_1^2 - \phi_2\phi_1y_1\phi_2 - y_1 - \phi_1\phi_2\phi_1 y_3 )e_{1012}.
\]
Now we compute the action of this element on $M_\ga\otimes S_\b$. For this we need to know information about the structure of $M_\ga$ and $S_\b$, which is known by previous computations with smaller roots and the results of \cite{hmm}.

In this case, $y_1^2e_{1012}$, $\phi_2e_{1012}$ and $y_3e_{1012}$ each act by zero on $M_\ga\otimes S_\b$, since the corresponding elements act by zero in $M_\ga\boxtimes S_\b$, while $y_1 e_{1012}$ acts nontrivially because the same is true for its action on $M_\ga$. When passing to the quotient $X$ the action of $xe_{\ii_\a}$ is still nontrivial since $X$ and $M_\ga\otimes S_\b$ have the same $1012$-weight space.

%Let $M$ be a  representation. Then we can construct a non-trivial self-extension of $M$ as follows: Let $N=M\oplus M\{2\}$ as a vector space. Idempotents and crossings act componentwise. Dots act in the obvious manner from $M$ to $M\{2\}$.

%Since $S_\ga$ is homogoneous, it has a non-trivial self-extension $M_\ga$. Now consider $M_\ga\otimes S_\b$. There are canonical injections $L_{\b\ga}\to S_\ga\otimes S_\b\to M_{\ga\otimes S_\b}$. Let $X$ be the quotient of $M_\ga\otimes S_\b$ by $L_{\b\ga}$. This is an extension of $S_{\ga\otimes S_\b}$ by $S_\a$, and hence represents a class in $\Ext^1(S_\ga\otimes S_\b,S_\a)$. We will show that this class has  image in $\Ext^1(L_{\b\ga},S_\a)$ which is sufficient to prove the proposition since this latter space is one-dimensional.

%Let $r$ be the height of $\ga$ and consider the action of $\phi_r^2 1_{[\ga][\b]}$. This is  on $X$. Furthermore it factors through a map from a wieght space in $L_{\b\ga}$ to a weight space in $S_\a$. Hence $L_{\b\ga}$ cannot be a submodule, so the corresponding extension is non-split.

There is one case not covered by the above argument, namely in $G_2$ with $\a=2\a_0+\a_1$, $\b=\a_0+\a_1$ and $\ga=\a_0$ where $\a_0$ is the short root and $\a_1$ is the long root.

In this case there is a five dimensional representation of $R(\a)$ with basis $v_{001}[3]$, $v_{001}[1]$, $v_{001}[-1]$, $v_{001}[-3]$, $v_{010}[0]$ where $v_\ii [d]$ is a vector of weight $\ii$ and degree $d$. The nonzero maps between these spaces are $\phi_1(v_{001}[3])=v_{001}[1]$, $\phi_1(v_{001}[-1])=v_{001}[-3]$, $y_1(v_{001}[i])=-v_{001}[i+2]$, $y_2(v_{001}[i])=v_{001}[i+2]$, $\phi_2(v_{001}[-3])=v_{010}[0]$ and $\phi_2(v_{010}[0])=v_{001}[3]$.

This representation represents an element of $\Ext^1(S_\ga\otimes S_\b,S_\a)$ with nonzero image in $\Ext^1(L_{\b\ga},S_\a)$.
\end{proof}

\begin{proposition}\label{cuspidalselfext}
 Let $\a$ be a positive root. Then  $\Ext^1(S_\a,S_\a)=k\{\a\cdot\a \}$ and
$\Ext^i(S_\a,S_\a)=0$ for $i\geq 2$.
\end{proposition}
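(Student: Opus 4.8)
The plan is to prove Proposition~\ref{cuspidalselfext} by induction on the height of $\a$, simultaneously with the other results of this section as promised, and to treat the base case and the inductive step quite differently. If $\a$ is simple, then $R(\a)\cong k[x]$ with $x$ in degree $\a\cdot\a$, so the minimal projective resolution of the one-dimensional module $S_\a$ is $0\to k[x]\{\a\cdot\a\}\xrightarrow{x} k[x]\to S_\a\to 0$, from which $\Ext^0(S_\a,S_\a)=k$, $\Ext^1(S_\a,S_\a)=k\{\a\cdot\a\}$ and $\Ext^i(S_\a,S_\a)=0$ for $i\ge 2$ by a direct computation. So from now on assume $\a$ is not simple. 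In the simply-laced case one can bypass the $B,C,F,G$ lemmas: the geometric realisation of \cite{vv,rouquier2} exhibits $R(\nu)$ as an $\Ext$-algebra and gives the result directly (or one invokes Kato's finiteness theorem \cite{kato} together with a dimension count), so the real work is in types $B,C,F,G$, where Lemmas~\ref{ses}, \ref{minpairext} and \ref{nonformal} were set up precisely for this purpose.

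For the inductive step in types $B,C,F,G$, fix the HMM-ordering and let $(\b,\ga)$ be the minimal pair for $\a$ from the appendix. The strategy is to compute $\Ext^*(S_\a,S_\a)$ by feeding the two short exact sequences of Lemma~\ref{ses},
\[
0\to L_{\b\ga}\to S_\ga\otimes S_\b\to S_\a\to 0,\qquad
0\to S_\a\to S_\b\otimes S_\ga\to L_{\b\ga}\to 0,
\]
into long exact sequences of $\Ext$ groups. First I would compute $\Ext^*(S_\ga\otimes S_\b, S_\a)$ — this is exactly Lemma~\ref{minpairext}, giving $k,k^2,k,0,0,\dots$ in homological degrees $0,1,2,\ge 3$. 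Next I need $\Ext^*(L_{\b\ga},S_\a)$; here I would use that $L_{\b\ga}=L(m)$ for $m=1_\b+1_\ga$ has height strictly smaller contributions, or more precisely apply the restriction–induction adjunction (Corollary~\ref{extadjunction}) together with Lemma~\ref{ressa}/Lemma~\ref{restriction} and the inductive hypothesis applied to $S_\b$ and $S_\ga$: since $\Res$ of $S_\a$ into the relevant pieces only sees $S_\ga\boxtimes S_\b$ (Lemma~\ref{resgammabeta}), the $\Ext$ between $L_{\b\ga}$ and $S_\a$ is controlled by $\Ext^*(S_\ga,S_\ga)\otimes\Ext^*(S_\b,S_\b)\cong k[x]/(x^2)\otimes k[y]/(y^2)$, possibly with a correction term. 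Then, applying $\Ext^*(-,S_\a)$ to the first sequence above yields a long exact sequence relating $\Ext^*(S_\a,S_\a)$, $\Ext^*(S_\ga\otimes S_\b,S_\a)$ and $\Ext^*(L_{\b\ga},S_\a)$, and the key input is Lemma~\ref{nonformal}: the connecting/restriction map $\Ext^1(S_\ga\otimes S_\b,S_\a)\to \Ext^1(L_{\b\ga},S_\a)$ is nonzero. Chasing dimensions through this long exact sequence — using that the relevant maps in degrees $1$ and $2$ have the right ranks — should pin down $\Ext^0(S_\a,S_\a)=k$ (automatic, $S_\a$ simple), $\Ext^1(S_\a,S_\a)=k$ in degree $\a\cdot\a$, and $\Ext^i(S_\a,S_\a)=0$ for $i\ge 2$. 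The second short exact sequence of Lemma~\ref{ses} gives a complementary long exact sequence that can be used to cross-check, or to handle whichever of the bounds is not cleanly visible from the first one.

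The grading shift claim — that $\Ext^1(S_\a,S_\a)$ sits in internal degree exactly $\a\cdot\a$ — requires a little extra care beyond the dimension count: I would track the $q$-gradings through the long exact sequence, using that the grading shifts in Lemma~\ref{ses} and in the adjunctions are determined, and that by inductive hypothesis $\Ext^1(S_\b,S_\b)=k\{\b\cdot\b\}$ and $\Ext^1(S_\ga,S_\ga)=k\{\ga\cdot\ga\}$; combined with $\a\cdot\a = \b\cdot\b + 2\b\cdot\ga + \ga\cdot\ga$ and the known shift $\b\cdot\ga$ appearing in the coinduction, the surviving degree-one class lands in degree $\a\cdot\a$. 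The main obstacle I anticipate is the bookkeeping in the $\Ext^*(L_{\b\ga},S_\a)$ computation and the subsequent long exact sequence chase: one must be sure the rank of the map $\Ext^1(S_\ga\otimes S_\b,S_\a)\to\Ext^1(L_{\b\ga},S_\a)$ is exactly what is needed (Lemma~\ref{nonformal} gives nonzero, and one needs to combine this with the dimension $2$ from Lemma~\ref{minpairext} and the structure of $\Ext^*(L_{\b\ga},S_\a)$ to conclude the map is, say, of rank one with the cokernel killing the would-be $\Ext^2$), and that no spurious classes survive in homological degree $\ge 2$. This is where the non-formality statement of Lemma~\ref{nonformal} does the essential work, ruling out the "obvious" higher self-extensions that a naive tensor-product estimate would predict.
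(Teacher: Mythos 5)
Your outline reproduces the paper's skeleton for types $B$, $C$, $F$, $G$ with the HMM-ordering (induction on height, the simple-root base case, the two sequences of Lemma \ref{ses}, the dimension count of Lemma \ref{minpairext}, and the non-formality input of Lemma \ref{nonformal}), but it is missing the one mechanism that makes the long-exact-sequence chase close up: you have no actual way to compute $\Ext^*(L_{\b\ga},S_\a)$. There is no adjunction available for it --- $L_{\b\ga}$ is a simple quotient of $S_\b\otimes S_\ga$, not an induced or restricted module --- so ``controlled by $\Ext^*(S_\ga,S_\ga)\otimes\Ext^*(S_\b,S_\b)$, possibly with a correction term'' is not a computation, and the ranks of the connecting maps you propose to ``chase'' are precisely the unknowns. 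The paper's key observation is that $\Res_{\b,\ga}S_\a=0$ (by Lemma \ref{ressa}, since $\b<\a$ cannot be a sum of roots $\geq\a$), hence by adjunction $\Ext^i(S_\b\otimes S_\ga,S_\a)=0$ for \emph{all} $i$; applying $\Hom(-,S_\a)$ to the second sequence of Lemma \ref{ses} then collapses to dimension-shift isomorphisms $\Ext^i(S_\a,S_\a)\isomto\Ext^{i+1}(L_{\b\ga},S_\a)$. This is what pins down $\Ext^1(L_{\b\ga},S_\a)\cong k$ (so that Lemma \ref{nonformal} upgrades to surjectivity and the first sequence plus Lemma \ref{minpairext} give $\dim\Ext^1(S_\a,S_\a)=1$), and it is also what propagates the vanishing into degrees $\geq 2$, together with a Yoneda-product/commutative-diagram argument showing the relevant boundary maps vanish. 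Treating that second sequence as a mere ``cross-check'' leaves the argument genuinely open.

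Two further gaps. First, the proposition is needed for an \emph{arbitrary} convex ordering and all types (the cuspidals $S_\a$ depend on the ordering), whereas your plan only treats the HMM-ordering in $B$, $C$, $F$, $G$ and waves at the simply-laced case; the geometric realisation does not ``directly'' give $\Ext^{\geq 2}(S_\a,S_\a)=0$. The paper's second half instead assumes only finite global dimension (Theorem \ref{finitecd} via the HMM-ordering in $B,C,F,G$; \cite{kato} in ADE characteristic zero, with $D,E$ in positive characteristic deferred to \cite{bkm}) and argues with the short exact sequences from the proof of Theorem \ref{main}(1), whose kernel and cokernel $K,Q$ are $S_\a$-isotypic, deriving a contradiction with the maximal $d$ such that $\Ext^d(S_\a,S_\a)\neq 0$. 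Second, the grading statement $\Ext^1(S_\a,S_\a)=k\{\a\cdot\a\}$ is not obtained in the paper by tracking internal degrees through the long exact sequences (your bookkeeping would also have to decide which of the two degree-one classes of $\Ext^1(S_\ga\otimes S_\b,S_\a)$ survives in the kernel); it is postponed and read off from the Euler form: once $\Ext^{\geq 2}=0$ and $\Ext^0=k$, the identity $\langle[S_\a],[S_\a]\rangle=\langle E_\a^*,E_\a^*\rangle=1-q_\a^2$ forces $\Ext^1(S_\a,S_\a)=k\{\a\cdot\a\}$.
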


%We will first prove this without taking to account the grading on $\Ext^1$ and
%come back later to ascertain the correct grading.

%We give two proofs of this. One will assume Theorem \ref{finitecd} and Lemma \ref{ses}, while the other will assume Lemma \ref{ses} and Proposition \ref{nonformal}.

\begin{proof}
Without loss of generality assume that our Cartan datum is irreducible.

When $\a$ a simple root, $R(\a)\cong k[x]$ with $x$ in degree $\a\cdot\a$ and so this Proposition is determinable by
a routine calculation. Henceforth we shall assume $\a$ is not simple.

First let us suppose our Cartan datum is of type $B$, $C$, $F$ or $G$ and that we are using the HMM ordering. Let $(\b,\ga)$ be the minimal pair for $\a$ as tabulated in the appendix.

First consider the long exact sequence obtained by applying $\Hom(-,S_\a)$ to the sequence (\ref{ses2}). By Lemma \ref{ressa},  $\Res_{\beta,\gamma}S_\a=0$.
By adjunction this implies $\Ext^i(S_\b \otimes S_\ga , S_\a)=0$ for all $i\geq 0$. Thus this long exact sequence reduces to a sequence of isomorphisms 
\begin{equation}\label{dimshift}
\Ext^i(S_\a,S_\a)\isomto \Ext^{i+1}(L_{\b\ga},S_\a)
\end{equation}
for each $i\geq 0$.

In particular $\Ext^i(L_{\b\ga},S_\a)$ is one dimensional, so Lemma \ref{nonformal} implies that the map $\Ext^1(S_\ga\otimes S_\b,S_\a)\to \Ext^1(L_{\b\ga},S_\a)$ is surjective.

Now consider the long exact sequence obtained by applying $\Hom(-,S_\a)$ to the sequence (\ref{ses1}). By the above surjectivity result, and the fact that $L_{\b\ga}$ and $S_\a$ are nonisomorphic simple representations, we obtain the short exact sequence
\[
 0\to \Ext^1(S_\a,S_\a)\to \Ext^1(S_\ga\otimes S_\b,S_\a)\to\Ext^1(L_{\b\ga},S_\a)\to 0.
\]

By Lemma \ref{minpairext} the middle term is two-dimensional and we have just shown that the quotient is one-dimensional.%$\dim\Ext^1(L_{\b\ga},S_\a)=1$. 
This proves the $\Ext^1$ part of this Proposition up to a grading shift.

We now return to the isomorphisms (\ref{dimshift}). These isomorphisms are given by left multiplication by the class of the sequence (\ref{ses2}).
Consider the product map
\[
 \Ext^1(L_{\b\ga},S_\a)\times \Ext^1(S_\a,S_\a)\to \Ext^2(L_{\b\ga},S_\a).
\]
As we've just shown $\dim\Ext^1(S_\a,S_\a)=1$, again using (\ref{dimshift}), we see $\dim\Ext^2(L_{\b\ga},S_\a)=1$ and that this product map is surjective.

There is a commutative diagram as follows, where the vertical maps are given by the Yoneda product, and the horizontal maps $\delta$ are the connecting maps in the long exact sequence obtained by applying $\Hom(-,S_\a)$ to the sequence (\ref{ses1}):
\[\begin{CD}
 \Ext^1(L_{\b\ga},S_\a)\otimes  \Ext^1(S_\a,S_\a)@>\delta\otimes \operatorname{id}>> \Ext^2(S_\a,S_\a)\otimes\Ext^1(S_\a,S_\a)\\
@VVV    @VVV \\
\Ext^2(L_{\b\ga},S_\a) @>\delta>> \Ext^3(S_\a,S_\a).
\end{CD}\]
%In the long exact sequence under consideration, the two boundary maps $\Ext^1(L_{\b\ga},S_\a)\to\Ext^2(S_\a,S_\a)$ and $\Ext^2(L_{\b\ga},S_\a)\to\Ext^3(S_\a,S_\a)$ commute with right multiplication by $\Ext^1(S_\a,S_\a)$. 
In this diagram, we know that the uppermost boundary map is zero and the leftmost product map is surjective. Therefore the lower boundary map is also zero.

The vanishing of $\Ext^i(S_\a,S_\a)$ for $i\geq 2$ can now be deduced in a routine fashion using Lemma \ref{minpairext} and the two long exact sequences we have considered.

Apart from the grading on $\Ext^1(S_\a,S_\a)$ which we shall postpone until the end of this section, this completes the proof in this particular case.
So now we shift our focus to the general case.

We may assume that the KLR algebras have finite global dimension. In types $B$, $C$, $F$ or $G$, this is a consequence of Theorem \ref{finitecd} proved using the HMM ordering. In types $A$, $D$ and $E$ over a ground field of characteristic zero, this is due to \cite[Theorem A]{kato}. An alternative approach in this latter case using the techniques of this section is proved to be possible in \cite{bkm}.

Let $(\b,\ga)$ be a minimal pair for $\a$. By the proof of Theorem \ref{main}(1), there are short exact sequences
\begin{eqnarray}
\label{qext} 0 \to L_{\b\ga} \to S_\ga\otimes S_\b \to Q \to 0 \\
\label{kext} 0 \to K \to S_\b \otimes S_\ga \to L_{\b\ga} \to 0
\end{eqnarray}
where $K$ and $Q$ have only $S_\a$ appearing as a simple factor in any composition series.

By induction on the height of the root, $\Ext^i(S_\ga\boxtimes S_\b,S_\ga\boxtimes S_\b)=0$ for $i\geq 3$. Lemma \ref{resgammabeta} implies that $\Ext^i(S_\ga\boxtimes S_\b,\Res_{\ga\b}S_\a)=0$ for $i\geq 3$ and hence by adjunction $\Ext^i(S_\ga\otimes S_\b,S_\a)=0$ for $i\geq 3$ 

As in the specialised case considered above, $\Ext^i(S_\b\otimes S_\ga,S_\a)=0$ for all $i$. 

We now apply $\Hom(-,S_\a)$ to the two short exact sequences (\ref{qext}) and (\ref{kext}) to obtain two long exact sequences. The vanishing results we have just 
deduced imply that there are isomorphisms 
\begin{eqnarray*}
\Ext^i(L_{\b\ga},S_\a)\cong \Ext^{i+1}(Q,S_\a)& \mbox{   for}& i\geq 3, \\
 \Ext^i(K,S_\a)\cong \Ext^{i+1}(L_{\b\ga},S_\a)&\mbox{for all}&i\geq 0.
\end{eqnarray*}

Let $d$ be the maximal integer with $\Ext^d(S_\a,S_\a)\neq 0$, which exists since $R(\a)$ has finite global dimension. 

As $K$ is isotypic, $\Ext^d(K,S_\a)\neq 0$, implying $\Ext^{d+1}(L_{\b\ga},S_\a)\neq 0$. If $d\geq 2$, we obtain $\Ext^{d+2}(Q,S_\a)\neq 0$, and since $Q$ is also isotypic, $\Ext^{d+2}(S_\a,S_\a)\neq 0$, contradicting the maximality of $d$.

Thus we have shown that $\Ext^i(S_\a,S_\a)=0$ for $i\geq 2$. We shall defer the rest of the proof of this Proposition until the end of this section.
\end{proof}

We now state our main theorem of this section.
\begin{theorem}\label{finitecd}
As a graded algebra, the KLR algebra $R(\nu)$ has global dimension $|\nu|$.
\end{theorem}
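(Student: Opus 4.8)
The strategy is an induction on the height $|\nu|$, reducing the computation of $\mathrm{gl.dim}\, R(\nu)$ to the self-extension data for cuspidal modules already obtained in Proposition \ref{cuspidalselfext}. Since the simple modules are exactly the $L(m)$ for $|m|=\nu$, it suffices to bound $\Ext^i(L(m),L(n))$ over all pairs $m,n$ and show the maximum homological degree achieved is precisely $|\nu|$.

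\textbf{Upper bound.}
First I would establish $\Ext^i(L(m),N)=0$ for $i>|\nu|$ and all modules $N$, by induction. The key tool is that $L(m)$ is the head of $\Delta(m)=S_1^{\otimes m_1}\otimes\cdots\otimes S_N^{\otimes m_N}$; writing $\Delta(m)$ as an iterated induction product and using the Ext-adjunction of Corollary \ref{extadjunction}, one reduces $\Ext^*(\Delta(m),N)$ to $\Ext^*$ between the outer tensor product of cuspidals and $\Res(N)$. Since $\Ext^*$ of an outer tensor product is the graded tensor product of the individual $\Ext^*$'s, and since Proposition \ref{cuspidalselfext} shows each cuspidal $S_\a$ has $\Ext^{\geq 2}(S_\a,-)$ controlled with top degree $1$ against itself, the outer product $S_1^{\boxtimes m_1}\boxtimes\cdots\boxtimes S_N^{\boxtimes m_N}$ has projective dimension at most $\sum_i m_i = |\nu|$ — that is, each cuspidal $S_{\a_i}$ contributes at most $1$ to homological degree, and there are $|\nu|$ of them counted with the $m_i$. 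One then passes from $\Delta(m)$ to its head $L(m)$ by using the composition series of $\Delta(m)$ (part (5) of Theorem \ref{main}, which says all other constituents $L(m')$ have $m'>m$, so they have strictly smaller... no — they have the same $\nu$): more carefully, one argues that the kernel of $\Delta(m)\twoheadrightarrow L(m)$ is built from simples $L(m')$ with $m'>m$ and $m'>'m$, and runs a secondary induction on the lex order to bound $\mathrm{pd}\, L(m)\leq|\nu|$ as well.

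\textbf{Lower bound.}
For the reverse inequality I would exhibit a single pair with $\Ext^{|\nu|}\neq 0$. The natural candidate is $m$ supported on simple roots, say $\nu=\sum_i \nu_i\,\a_i$ and $\Delta=\bigotimes_i S_{\a_i}^{\otimes\nu_i}$ with each $S_{\a_i}$ the $1$-dimensional module of $R(\a_i)\cong k[x]$. Since $R(\a_i)=k[x]$ has global dimension $1$ and its simple has a length-$2$ projective resolution, the outer tensor product of $|\nu|$ such simples has a Koszul-type resolution realizing $\Ext^{|\nu|}(S_{\a_1}^{\boxtimes\nu_1}\boxtimes\cdots,\ S_{\a_1}^{\boxtimes\nu_1}\boxtimes\cdots)\neq 0$; inducing up and using the adjunction (together with the fact that restriction of the induced module back down contains the original outer product as a summand-detecting piece via Lemma \ref{restriction}) transports this nonvanishing to $\Ext^{|\nu|}$ between modules over $R(\nu)$. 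One must check this survives passage to a genuine simple $L(m)$ rather than $\Delta(m)$; here $\Res_m\Delta(m)=\Res_m L(m)$ and the multiplicity-one statement in Theorem \ref{main}(5) keep the extension class from being killed.

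\textbf{Main obstacle.}
The delicate point is the passage from $\Delta(m)$ (or $\nabla(m)$), where the homological algebra is transparent because it factors through outer tensor products of cuspidals, to the actual simple module $L(m)$, whose projective resolution is not directly accessible. Controlling $\Ext^*(L(m),L(n))$ requires bootstrapping through the composition series using the strict lex-order constraints of Theorem \ref{main}(5), and ensuring the top-degree class does not cancel — this is where one genuinely needs the fine self-extension computation of Proposition \ref{cuspidalselfext} rather than a crude bound, and in types $D$ and $E$ in positive characteristic the required cuspidal input is exactly what is deferred to \cite{bkm}. A secondary subtlety is tracking grading shifts through the induction adjunctions, but as remarked these do not affect the dimension count and can be suppressed.
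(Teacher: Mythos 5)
Your lower bound is essentially the paper's (a tensor product of simple-root cuspidals realizes $\Ext^{|\nu|}\neq 0$), but the engine of your upper bound contains a genuine error. You claim that Proposition \ref{cuspidalselfext} lets each cuspidal $S_\a$ "contribute at most $1$ to homological degree", so that the outer product $S_1^{\boxtimes m_1}\boxtimes\cdots\boxtimes S_N^{\boxtimes m_N}$ has projective dimension at most $\sum_i m_i$. Proposition \ref{cuspidalselfext} only controls the \emph{self}-extensions $\Ext^i(S_\a,S_\a)$; it says nothing about $\Ext^i(S_\a,N)$ for arbitrary $N$, i.e.\ it does not bound the projective dimension of $S_\a$ by $1$ — and that bound is false for non-simple $\a$. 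Already in type $A_2$ with $\a=\a_1+\a_2$: by adjunction $\Ext^2(S_\a,S_{\a_1}\otimes S_{\a_2})\cong\Ext^2_{k[x]\otimes k[x]}(k\boxtimes k,k\boxtimes k)\neq 0$, and since $\Ext^2(S_\a,S_\a)=0$ the long exact sequence forces $\Ext^2(S_\a,L(1_{\a_1}+1_{\a_2}))\neq 0$; in general $\operatorname{pd}S_\a=\operatorname{ht}(\a)$, not $1$. (Relatedly, $\sum_i m_i$ is not $|\nu|$ unless $m$ is supported on simple roots, so even the arithmetic of your count signals the problem: your argument would "prove" $\operatorname{pd}S_\a\leq 1$, contradicting $\operatorname{gl.dim}R(\a)=\operatorname{ht}(\a)$.) The paper's bookkeeping is different and is what makes the induction close: one bounds $\Ext^i(\nabla(m),\Delta(m'))$ for $i>|\nu|$, where in the non-cuspidal case adjunction reduces to outer products over proper subalgebras $R(\la_j)$ and the inductive hypothesis $\operatorname{gl.dim}R(\la_j)=|\la_j|$ gives vanishing above $\sum_j|\la_j|=|\nu|$; the only case not covered by induction is cuspidal against cuspidal, and there precisely the self-Ext statement of Proposition \ref{cuspidalselfext} suffices. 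One then bootstraps from standard/costandard pairs to $\Ext^i(L(m),L(m'))$ by the lex-order composition-series induction (your "main obstacle" paragraph is the right instinct here, but it must be run against $\nabla$/$\Delta$ pairs, not against a purported length-$\sum m_i$ resolution).

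A secondary gap: for this infinite-dimensional algebra it is not automatic that the global dimension is computed by $\Ext$ between the (finite-dimensional) simples. The paper spends real effort on this passage: from simples to finite-dimensional modules, then to finitely generated modules using the almost-positive grading and minimal resolutions (Lemma \ref{fgen}), and finally to arbitrary modules via Noetherianness of $R(\nu)$ and the fact that a class in $\Ext^i(M,C)$ is represented by a map from a finitely generated projective and hence factors through a finitely generated submodule of $C$ (together with \cite[Theorem 4.1.2]{weibel}). Your proposal asserts the reduction to simples without argument; as stated it is a missing step, though a routine one compared with the first issue.
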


This will be dealt with by a sequence of Lemmas. %Throughout, we work with ungraded Exts (in the sense of only considering degree zero maps).

\begin{lemma}
Suppose $m,m'\in \N^{\Phi^+}$ with $|m|=|m'|=\nu$. Then $\Ext^i(\nabla(m),\Delta(m'))=0$ for $i>|\nu|$. 
\end{lemma}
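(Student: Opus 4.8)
The plan is to compute $\Ext^i(\nabla(m),\Delta(m'))$ via the restriction-induction adjunction, reducing it to an $\Ext$ computation over a tensor product of the algebras $R(\a_j)$ and then bounding the homological degrees that can contribute. First I would write $\nabla(m)=\Ind(N)$ where $N=S_N^{\otimes m_N}\boxtimes\cdots\boxtimes S_1^{\otimes m_1}$, living over $R(m_N\a_N)\otimes\cdots\otimes R(m_1\a_1)$, and apply Corollary \ref{extadjunction} to get
\[
\Ext^i(\nabla(m),\Delta(m'))\cong \Ext^i\bigl(N,\Res_{m_N\a_N,\ldots,m_1\a_1}\Delta(m')\bigr).
\]
Now I would invoke the Mackey filtration (Proposition \ref{mackey}) to filter $\Res_{m_N\a_N,\ldots,m_1\a_1}\Delta(m')$ by subquotients of the form $\Ind_\nu^\mu\circ\tau\circ\Res_\nu^{\la}$ applied to $S_1^{\otimes m'_1}\boxtimes\cdots\boxtimes S_N^{\otimes m'_N}$; since $\Ext$ is bounded (via the long exact sequences) by the maximum over the subquotients, it suffices to bound $\Ext^i(N,-)$ against each such piece.

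The next step is to observe that each Mackey subquotient is, up to grading shift, an external tensor product $\bigboxtimes_{j} P_j$ where $P_j$ is an induction product of cuspidal powers $S_\a^{\otimes(\cdot)}$ restricted appropriately; by Lemma \ref{ressa}, the root-combinatorial constraints force the tensor pieces of $N$ to match up with those of the $\Delta(m')$-restriction index-by-index (a given $m_j\a_j$ can only meet contributions built from $S_{\a_j}$), so the $\Ext$ factors as a tensor product $\bigotimes_j \Ext^{\bullet}\bigl(S_{\a_j}^{\otimes m_j},P_j\bigr)$ over the individual roots, and by the Künneth-type additivity of homological degree it suffices to show each factor vanishes above degree $m_j|\a_j|\le |\nu|$ appropriately summed — more precisely, that $\Ext^i(S_{\a_j}^{\otimes m_j}, S_{\a_j}^{\otimes m_j})$ (the only relevant target, since $S_{\a_j}^{\otimes m_j}$ is irreducible by Lemma \ref{cuspidalpower} and all subquotients are isotypic of that type) vanishes for $i$ large, with the total bound adding up to at most $|\nu|$. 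Here I would use Proposition \ref{cuspidalselfext}: $\Ext^1(S_\a,S_\a)$ is one-dimensional and $\Ext^{\ge 2}(S_\a,S_\a)=0$, so by a standard spectral-sequence / Künneth argument $\Ext^i(S_\a^{\otimes m},S_\a^{\otimes m})$ vanishes for $i>m$, hence the contribution from the $j$-th root dies above degree $m_j$, and summing gives vanishing above $\sum_j m_j\le |\nu|$ — wait, one must be careful: the bound should be $\sum_j m_j$, which equals the number of cuspidal factors, not $|\nu|=\sum_j m_j|\a_j|$; since $\sum_j m_j\le\sum_j m_j|\a_j|=|\nu|$, vanishing above $\sum_j m_j$ certainly gives vanishing above $|\nu|$.

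The main obstacle I anticipate is making the index-matching rigorous: showing that in each Mackey subquotient the external tensor factors genuinely decouple root-by-root so that the $\Ext$ computation splits as an honest tensor product. This requires combining Lemma \ref{ressa} (which controls which roots can appear on each side of a restriction of a cuspidal) with the convexity Lemma \ref{convex} to rule out "cross terms" where a piece of $m_j\a_j$ pairs with cuspidals $S_{\a_k}$ for $k\ne j$ — the point being that $\Res_{\b,\ga}S_{\a_k}\ne 0$ forces $\b$ to be a sum of roots $\ge\a_k$ and $\ga$ a sum of roots $\le\a_k$, and stacking these constraints across all tensor slots forces the partition of $\nu$ induced by the Mackey datum to be the diagonal one. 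Once that combinatorial rigidity is established, the homological bookkeeping via Proposition \ref{cuspidalselfext} and the boundedness of $\Ext$ along the Mackey filtration is routine.
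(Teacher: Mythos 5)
There is a genuine gap, and it sits exactly at the step you flag as the ``main obstacle'': the claimed index-by-index decoupling of the Mackey pieces is false when $m\neq m'$. Lemma \ref{ressa} does \emph{not} force the partition of $\nu$ to be diagonal; it only constrains each slot to be a sum of roots on the correct side of $\a_k$, and a single cuspidal factor of $\Delta(m')$ can perfectly well be split across several restriction slots of $\nabla(m)$. The simplest counterexample is $m=1_\b+1_\ga$ with $(\b,\ga)$ a minimal pair for $\a$ and $m'=1_\a$: then $\Res_{\ga,\b}S_\a\neq 0$ (Lemmas \ref{ressa}, \ref{resgammabeta}), and Lemma \ref{minpairext} shows $\Ext^i(S_\ga\otimes S_\b,S_\a)$ is nonzero for $i=0,1,2$, so the Ext group you are computing is emphatically not a tensor product of self-Exts of the $S_{\a_j}^{\otimes m_j}$; your rigidity argument would predict it vanishes outright (the supports do not match), which already contradicts these cross terms. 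A secondary problem is that even where a factorization is available, Proposition \ref{cuspidalselfext} controls $\Ext^*(S_\a,S_\a)$ only, not $\Ext^*(S_\a^{\otimes m},S_\a^{\otimes m})$; passing from one to the other needs its own adjunction/Mackey argument, so the bound ``above $\sum_j m_j$'' is not free.

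The repair is simpler than your scheme and is what the paper does: if $m$ (or $m'$) has at least two cuspidal factors, the adjunction of Corollary \ref{extadjunction} puts you over the algebra $\bigotimes_j R(m_j\a_j)$ in which every factor has height strictly less than $|\nu|$, and the inductive instance of Theorem \ref{finitecd} (all results are proved by simultaneous induction on $|\nu|$) bounds the global dimension of this tensor product by $\sum_j m_j|\a_j|=|\nu|$ --- no Mackey filtration or root-matching is needed, since any bound valid for arbitrary modules over the smaller algebras suffices. The only case not covered is when both $\nabla(m)$ and $\Delta(m')$ are a single cuspidal, i.e.\ $m=m'=1_\nu$ with $\nu$ a root; there they coincide up to grading shift and Proposition \ref{cuspidalselfext} gives vanishing already in degrees $\geq 2$. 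Your proposal, by contrast, tries to avoid invoking the inductive global-dimension bound and to extract everything from cuspidal self-extensions, and that is precisely what forces the untenable diagonal-rigidity claim.
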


\begin{proof}
If either $\nabla(m)$ or $\Delta(m')$ is not cuspidal, then this follows from the adjunction of Corollary \ref{extadjunction}, together with applying an induction on $\nu$ to Theorem \ref{finitecd}.
If both $\nabla(m)$ and $\Delta(m')$ are cuspidal, they must be isomorphic up to a grading shift, and this Lemma is implied by the more presice Proposition \ref{cuspidalselfext}.
\end{proof}

\begin{lemma}
 Suppose $m,m'\in \N^{\Phi^+}$ with $|m|=|m'|=\nu$. Then $\Ext^i(L(m),L(m'))=0$ for $i>|\nu|$. 
\end{lemma}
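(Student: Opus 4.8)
The plan is to bootstrap from the preceding Lemma, which gives $\Ext^i(\nabla(m),\Delta(m'))=0$ for $i>|\nu|$, using two nested downward inductions over the finite totally ordered set $\{p\in\N^{\Phi^+}:|p|=\nu\}$ with respect to the lexicographic order $<$. The two structural inputs I will use come from Theorem \ref{main}: the surjection $\Delta(m')\twoheadrightarrow L(m')$ has a kernel $K'$ which, by parts (2) and (5), admits a filtration whose subquotients are simples $L(p)$ with $p>m'$ (strictly, using the multiplicity-one clause of (5)); dually, the inclusion $L(m)\hookrightarrow\nabla(m)$ of the socle has cokernel $C$ which, by parts (4) and (5), admits a filtration whose subquotients are simples $L(p)$ with $p>m$. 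Throughout I use the standard fact that a filtered module inherits $\Ext$-vanishing in either variable from its subquotients, read off from the long exact sequences of $\Ext$.

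First I would establish the intermediate claim $\Ext^i(\nabla(m),L(m'))=0$ for all $i>|\nu|$ and all $m,m'$, by downward induction on $m'$ with $m$ held fixed. When $m'$ is the maximal index, part (5) forces $\Delta(m')=L(m')$, so the claim coincides with the preceding Lemma. For the inductive step, apply $\Hom(\nabla(m),-)$ to $0\to K'\to\Delta(m')\to L(m')\to 0$: in degrees $i>|\nu|$ the term $\Ext^i(\nabla(m),\Delta(m'))$ vanishes by the preceding Lemma, while $\Ext^{i+1}(\nabla(m),K')$ vanishes because $K'$ is filtered by $L(p)$'s with $p>m'$ and $\Ext^{i+1}(\nabla(m),L(p))=0$ for such $p$ by the inductive hypothesis, as $i+1>|\nu|$. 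The long exact sequence then yields $\Ext^i(\nabla(m),L(m'))=0$.

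Next I would prove the Lemma itself, $\Ext^i(L(m),L(m'))=0$ for $i>|\nu|$, by downward induction on $m$. When $m$ is maximal, part (5) forces $\nabla(m)=L(m)$ and the statement is exactly the intermediate claim. For the inductive step, apply $\Hom(-,L(m'))$ to $0\to L(m)\to\nabla(m)\to C\to 0$: in degrees $i>|\nu|$ the term $\Ext^i(\nabla(m),L(m'))$ vanishes by the intermediate claim, and $\Ext^{i+1}(C,L(m'))$ vanishes because $C$ is filtered by $L(p)$'s with $p>m$ and $\Ext^{i+1}(L(p),L(m'))=0$ for such $p$ by the inductive hypothesis, as $i+1>|\nu|$. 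The long exact sequence squeezes $\Ext^i(L(m),L(m'))$ between these two vanishing groups, completing the argument.

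The heavy lifting is entirely in the preceding Lemma; what remains is pure homological bookkeeping. The point that genuinely needs care is the \emph{order} of the two inductions: the argument bounding $\Ext^\ast(L(m),L(m'))$ consumes the full strength of the bound on $\Ext^\ast(\nabla(m),L(m'))$, which in turn must first be established for all $m'$, so the two steps cannot be merged into a single induction. One must also keep track that the indices $p$ appearing in the filtrations of $K'$ and $C$ are \emph{strictly} larger than $m'$ and $m$ respectively, which is exactly where the multiplicity-one assertion in Theorem \ref{main}(5) enters.
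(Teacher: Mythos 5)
Your proof is correct and follows essentially the same route as the paper: the same intermediate claim $\Ext^i(\nabla(m),L(m'))=0$ proved by induction via the sequence $0\to K\to\Delta(m')\to L(m')\to 0$, then the lemma itself via $0\to L(m)\to\nabla(m)\to Q\to 0$, using the preceding Lemma and Theorem \ref{main}(5) at the same points. Your explicit bookkeeping of the induction direction (downward, since the filtration subquotients have strictly larger index) is if anything more careful than the paper's write-up.
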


\begin{proof}
 First we prove by induction on $m'$ that $\Ext^i(\nabla(m),L(m'))=0$ if $i>|\nu|$.

We apply $\Hom(\nabla(m),-)$ to the short exact sequence $0\to K\to \Delta(m')\to L(m')\to 0$ to obtain a long exact sequence of Ext-groups, with $\Ext^i(\nabla(m),L(m'))$ sandwiched between $\Ext^i(\nabla(m),\Delta(m'))$ and $\Ext^{i+1}(\nabla(m),K)$. The first of these two groups is zero by the previous Lemma. By the inductive hypothesis $\Ext^{i+1}(\nabla(m),L(m''))=0$ for $m''<m'$. By Theorem \ref{main}(5), the composition factors of $K$ are all of the form $L(m'')$ for $m''<m$, hence $\Ext^{i+1}(\nabla(m),K)=0$, as required.

We proceed from this intermediate result to the statement of the lemma by a similar inductive argument utilising the short exact sequence $0\to L(m)\to \nabla(m)\to Q\to 0$.
\end{proof}

\begin{corollary}\label{fdim}
 If $A$ and $B$ are finite dimensional representations of $R(\nu)$, then $\Ext^i(A,B)=0$ for $i>|\nu|$.
\end{corollary}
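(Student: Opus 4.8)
The plan is to reduce Corollary \ref{fdim} to the preceding lemma, which establishes the vanishing $\Ext^i(L(m),L(m'))=0$ for $i>|\nu|$ on all pairs of simple modules. First I would observe that any finite dimensional representation $A$ of $R(\nu)$ has a finite composition series, since each graded piece $R(\nu)_j$ is finite dimensional and the grading is bounded below, so $A$ has finite length. The same applies to $B$.

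The key step is then a standard d\'evissage argument: induct on the total composition length of $A$ and $B$. If either has length one, it is simple, and if both are simple the result is the previous lemma. For the inductive step, suppose $A$ has length $>1$ and choose a short exact sequence $0\to A'\to A\to A''\to 0$ with $A'$ and $A''$ of strictly smaller length (both finite dimensional, being subquotients of $A$). Applying $\Hom(-,B)$ gives a long exact sequence in which $\Ext^i(A,B)$ is squeezed between $\Ext^i(A'',B)$ and $\Ext^i(A',B)$; both of these vanish for $i>|\nu|$ by the inductive hypothesis, hence so does $\Ext^i(A,B)$. Symmetrically one reduces the length of $B$ using $\Hom(A,-)$ applied to a short exact sequence of $B$. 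This handles all cases.

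I do not expect any real obstacle here: the only subtlety is making sure the finite length claim is justified (which follows from the almost-positive grading and finite dimensionality of graded pieces noted in the Preliminaries) and that the long exact sequence for $\Ext$ in the category of ungraded modules is available in the usual form, which it is since $R(\nu)$ is a ring and we are working with ordinary $\Ext$ groups. Thus the corollary is genuinely a routine consequence of the lemma together with the structural facts already recorded.

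\begin{proof}
Since $R(\nu)$ is almost positively graded with each graded piece finite dimensional, every finite dimensional representation of $R(\nu)$ has a finite composition series. We argue by induction on the sum of the composition lengths of $A$ and $B$. If both $A$ and $B$ are simple, they are of the form $L(m)$ and $L(m')$ with $|m|=|m'|=\nu$, and the claim is the previous Lemma. Otherwise, without loss of generality $A$ has a subrepresentation $A'$ with nonzero quotient $A''=A/A'$, both finite dimensional and of strictly smaller length. Applying $\Hom(-,B)$ yields a long exact sequence containing
\[
\Ext^i(A'',B)\to \Ext^i(A,B)\to \Ext^i(A',B).
\]
For $i>|\nu|$ the outer terms vanish by the inductive hypothesis, so $\Ext^i(A,B)=0$. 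The case where $B$ is not simple is handled identically using $\Hom(A,-)$ applied to a short exact sequence $0\to B'\to B\to B''\to 0$ with $B'$, $B''$ of smaller length.
\end{proof}
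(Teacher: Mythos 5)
Your proof is correct and is exactly the dévissage the paper has in mind when it says the corollary is ``immediate'' from the classification Theorem \ref{main}(3) together with the preceding lemma. (One tiny simplification: finite length of $A$ and $B$ already follows from their finite dimensionality over $k$, with no need to invoke the grading of $R(\nu)$.)
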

\begin{proof}
 This is immediate using the classification result Theorem \ref{main}(3).
\end{proof}

\begin{lemma} \label{fgen}
 Let $A$ and $B$ be finitely generated representations of $R(\nu)$. Then $\Ext^i(A,B)=0$ for $i>|\nu|$.
\end{lemma}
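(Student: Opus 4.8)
The plan is to reduce from the case of finitely generated modules to the case of finite-dimensional modules, which is already handled by Corollary \ref{fdim}. The key observation is that $R(\nu)$ is an almost positively graded algebra with finite-dimensional graded pieces, so we should exploit the grading to approximate finitely generated modules by finite-dimensional ones.

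First I would reduce to the case where $A$ and $B$ are graded. Since $\Ext^i(A,B)$ in the ungraded category only inherits a grading from the gradings on $A$ and $B$, and a finitely generated module admits a grading, it suffices to prove the statement for graded modules and graded Ext. Next, I would set up a dévissage on $A$: since $A$ is finitely generated over an almost positively graded ring with finite-dimensional graded pieces, $A$ has a (possibly infinite, but locally finite) filtration or, better, I would work with the truncations. For each $n$, let $A_{\leq n}$ denote the quotient of $A$ by the submodule generated by all graded pieces in degrees $> n$ — no wait, more carefully: because $R(\nu)$ is bounded below in its grading and $A$ is finitely generated, $A$ is bounded below, and each truncation $A/A_{>n}$ (quotient by the submodule spanned by components of degree $>n$) is finite-dimensional. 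There is a short exact sequence $0 \to A_{>n} \to A \to A/A_{>n} \to 0$.

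The main technical point, and the step I expect to be the obstacle, is controlling $\Ext^i(A_{>n}, B)$ or passing to a colimit/limit. The cleanest route: fix the homological degree $i > |\nu|$ and a single graded degree $d$ in which we want to compute $\Ext^i(A,B)_d$. Take a projective resolution $P_\bullet \to A$ by finitely generated graded projectives (possible since $R(\nu)$ is Noetherian — or at least each $P_j$ finitely generated, which follows from $A$ finitely generated and the graded pieces being finite-dimensional). Then $\Ext^i(A,B)_d = H^i(\Hom(P_\bullet, B))_d$, and $\Hom(P_j, B)_d$ depends only on finitely many graded pieces of $B$ because $P_j$ is generated in finitely many degrees and $R(\nu)$ is bounded below. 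Hence $\Ext^i(A,B)_d$ is unchanged if we replace $B$ by a suitable finite-dimensional truncation quotient $B/B_{>N}$ for $N \gg 0$; but truncation quotients do not behave well with $\Hom(P_\bullet,-)$ unless we instead truncate $A$. So the symmetric move is: replace $A$ by a finite-dimensional quotient $A/A_{>M}$ for $M$ large, and $B$ by a finite-dimensional submodule-free piece — this requires care.

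Given these subtleties, I would instead argue as follows: it suffices to show $\Ext^i(A,B)_d = 0$ for each fixed $d$. Choose a projective resolution $P_\bullet$ of $A$ with each $P_j$ finitely generated graded projective. Then $\Ext^i(A,B)_d$ is computed from $\Hom(P_j,B)$ in degree $d$ for $j = i-1, i, i+1$, and since each $P_j$ is finitely generated in bounded degrees and $R(\nu)$ is bounded below, there is $N$ (depending on $d$ and the $P_j$) such that $\Hom(P_j,B)_d = \Hom(P_j, B/B_{>N})_d$ for those $j$ — here $B_{>N}$ is the submodule spanned by components of degree $>N$ — provided $B_{>N}$ is annihilated in the relevant degree, which holds because maps from $P_j$ land in degrees $\leq d + (\text{top degree of generators of } P_j)$. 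Wait, one must also ensure the differentials match up; since truncation in high degrees commutes with the relevant finite computation, $\Ext^i(A,B)_d = \Ext^i(A, B/B_{>N})_d$. Now $B/B_{>N}$ is finite-dimensional, so by Corollary \ref{fdim} (applied with $A$ finitely generated: but Corollary \ref{fdim} needs both finite-dimensional) — so I still need $A$ finite-dimensional. Therefore I must also truncate $A$: using that $\Ext^i(A,-) $ agrees with $\Ext^i(A/A_{>M},-)$ in degree $d$ when applied to a module concentrated in degrees $\leq N$, for $M \gg N$, because a projective resolution of $A/A_{>M}$ agrees with one of $A$ in low homological degrees and bounded internal degrees. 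Concluding: for suitable $M, N$, $\Ext^i(A,B)_d = \Ext^i(A/A_{>M}, B/B_{>N})_d$, both modules finite-dimensional, so this vanishes for $i > |\nu|$ by Corollary \ref{fdim}. Since $d$ and $i > |\nu|$ were arbitrary, $\Ext^i(A,B) = 0$ for $i > |\nu|$. The obstacle is making the two truncation comparisons precise and compatible; this is where I would spend the care, using boundedness of the grading on $R(\nu)$ throughout.
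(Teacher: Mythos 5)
Your proposal is correct in outline and is essentially the paper's argument: both reduce to a single graded degree and exploit truncations by the submodules $A_{>M}$, $B_{>N}$ generated by high-degree components, the bounded-below grading of $R(\nu)$, finitely generated (Noetherian) projective resolutions, and Corollary \ref{fdim}. The one step to tighten is the comparison $\Ext^i(A,B/B_{>N})_d \cong \Ext^i(A/A_{>M},B/B_{>N})_d$, which is cleanest not via ``resolutions of $A$ and $A/A_{>M}$ agreeing'' (they do not, even in low homological degree) but via the long exact sequence for $0\to A_{>M}\to A\to A/A_{>M}\to 0$ together with the paper's minimal-resolution observation that the $i$-th term of a resolution of $A_{>M}$ can be taken generated in degrees that grow with $M$, so its degree-$d$ homomorphisms into a module bounded above in degree vanish.
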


\begin{proof} 
First let us consider the case where $B$ is finite dimensional.
It is sufficient to show that the degree zero piece of $\Ext^i(A,B)$ is zero for $i>|\nu|$.
 Let $A'$ be the submodule of $A$ generated by $\oplus_{d>c}A_d$ where $c$ is a sufficiently large integer. Then $A/A'$ is finite dimensional, so by Corollary \ref{fdim}, if $\Ext^i(A,B)\neq 0$, then $\Ext^i(A',B)\neq 0$. Consider a minimal projective resolution of $A'$. Since $R(\nu)$ is almost positively graded, the $i$-th term of this resolution is concentrated in degrees above those appearing in $B$ for sufficiently large $c$. Thus for $c$ sufficiently large, there are no degree zero homomorphisms to $B$ from the $i$-th term of this projective resolution, so $\Ext^i(A',B)$ vanishes in degree zero.

A similar argument allows us to relax the condition that $B$ is finite dimensional to $B$ is finitely generated.
\end{proof}

We now give the proof of Theorem \ref{finitecd}.

\begin{proof}
By \cite[Theorem 4.1.2]{weibel}, it suffices to show that if $M$ is a finitely generated $R(\nu)$-module and $C$ is an arbitrary $R(\nu)$-module, then $\Ext^i(M,C)=0$ if $i>|\nu|$. It is also sufficient to consider the degree zero piece of $\Ext^i(M,C)$.

The algebra $R(\nu)$ is Noetherian \cite[Corollary 2.11]{khovanovlauda}. Therefore we can find a projective resolution of $M$ consisting of finitely generated projective modules. Consider such a resolution $P_{\bullet}\to M$. Any element of $\Ext^i(M,C)$ is represented by a map from $P_i$ to $C$. Since $P_i$ is finitely generated, this map factors through a finitely generated submodule $C'$ of $C$.
%Let us write $C$ as a direct limit of its finitely generated submodules. 
Thus if $\Ext^i(M,C)\neq 0$, then $\Ext^i(M,C')\neq 0$ for some finitely generated submodule $C'$ of $C$. So without loss of generality, we may assume that $C$ is finitely generated.

%Pick an integer $d$. Let $D$ be the submodule of $C$ generated by the graded pieces $C_j$, for $j\geq d$. Then as $C$ is finitely generated, $C/D$ is finite dimensional. Hence for $i>|\nu|$, the previous Lemma shows that $\Ext^i(M,C/D)=0$.

%We may choose $d$ to be sufficiently large such that $D_j=0$ for each of the finitely many $j$ that appear as the degrees of elements of a generating set of $P_i$. Since we only considering morphisms of degree zero, we must have that $\Ext^i(M,D)=0$.

By Lemma \ref{fgen}, $\Ext^i(M,C)=0$ for $i> |\nu|$.

If $X$ is a tensor power of cuspidals associated to simple roots, then $\Ext^{|\nu|}(X,X)\neq 0$, so the upper bound we have proven on the global dimension is the best possible.
\end{proof}

% Form a resolution $0\to K\to P_d \to P_{d-1} \to \cdots \P_0 \to M\to 0$ with $P_i$ all projective. Note that $K$ is finitely presented, since $R(\nu)$ is Noetherian, being finite over a polynomial ring. By dimension shifting, $\Ext^1(K,B)=\Ext^d(M,B)=0$ for all finite dimensional modules $B$. 

%Let $C$ be a $R(\nu)$-module and suppose that $\Ext^1(K,C)\neq 0$. Write $C$ as a direct limit of its finitely generated submodules. Since $K$ is finitely generated, $\Ext^1(K,C')\neq 0$ for some finitely generated submodule of $C$ (Here we use Noetherian), so without loss of generality, assume $C$ is finitely generated.

%Let $D$ be the submodule of $C$ generated by $\{C_i\}_{i\geq N}$. Then $C/D$ is finite dimensional, so $\Ext^1(K,C/D)=0$. Hence $\Ext^1(K,D)\neq 0$.
%Consider a short exact sequence $0\to D\to X\to K \to 0$. Let $k_i$ be  generators of $K$. If $N$ is sufficiently large, then $k_i$ lift uniquely to  elements of $X$. Furthermore, the finite number of relations lift uniquely to $X$, so there is a splitting of the short exact sequence, hence the relevant Ext group is zero. (Alt: compute $\Ext^1$ using a projective resolution of $K$).

%Therefore $\Ext^1(K,C)=0$ for all $R(\nu)$-modules $C$, and hence $K$ is projective.

%%%%%%%%%%%%%%%%%%%%%%%%%%%%%%%%%%%%%
\subsection{The Ext Bilinear Form}%%%
%%%%%%%%%%%%%%%%%%%%%%%%%%%%%%%%%%%%%
Given two representations $X$ and $Y$ of $R(\nu)$, finitude of global dimension allows us to define
\[
 \langle X, Y \rangle = \sum_{i=0}^\infty (-1)^i\dimq \Ext^i(X,Y).
\]
This descends to a pairing on the Grothendieck group
\[
 \langle \cdot,\cdot\rangle \map{\f_{\A}^*\times \f_{\A}^*}{\A}.
\]
This form is $\A$-semilinear in the sense that for any $f,g\in\A$ and $u,v\in \f_{\A}^*$, $$\langle fv,gw\rangle=f\bar g\langle v,w\rangle.$$

It is straightforward to compute
\[
 \langle [S_{\a_i}],[S_{\a_i}]\rangle = 1-q_i^2
\] for any $i\in I$.
Furthermore, the induction-restriction adjunction implies that
\[
 \langle x x',y\rangle = \langle x \otimes x',r(y) \rangle,
 \]
 for all $x,x',y\in \f^*_\A$.

Working now over $\Q(q)$ and using the isomorphism between $\f$ and $\f^*$ induced by the bilinear form $(\cdot,\cdot)$, we may define a new form $(\cdot,\cdot)'\map{\f\times \f}{\Q(q)}$ by
\[
 (x,y)'=\langle x,\bar y \rangle.
\]
Here, we caution the reader that the bar involution in the above equation must be taken to be the bar involution on $\f^*$ (as opposed to the involution on $\f$).

Since $[S_{\a_i}]$ is bar-invariant and equal to $(1-q_i^2)E_i$, the above remarks show that $(\cdot,\cdot)$ is $Q(q)$-bilinear and satisfies
\begin{eqnarray*}
 (E_i,E_i)'&=&(1-q_i^2)^{-1} \\
(xx',y)'&=&(x\otimes x',r(y))'.
\end{eqnarray*}
Since $(\cdot,\cdot)$ is the unique bilinear form on $\f$ with these properties, we have $(\cdot,\cdot)'=(\cdot,\cdot)$.

We can now use the facts $(E_\a,E_\a)=(1-q_\a^2)^{-1}$ and $E_\a^*=(1-q_\a^2)E_\a$ mentioned in Theorem \ref{pbwbasis} to deduce
\[
 \langle E_\a^*,E^*_\a \rangle  = 1-q_\a^2.
\]

Since $[S_\a]=E_\a^*$, we can use this last computation to finish the proof of Proposition \ref{cuspidalselfext}.
We've already shown that $\Ext^i(S_\a,S_\a)=0$ for $i\geq 2$ and as $S_\a$ is simple, $\Ext^0(S_\a,S_\a)=k$. Thus the computation of this pairing shows that $\Ext^1(S_\a,S_\a)=k\{\a\cdot\a\}$ as required.

\section{Appendix}%%%%%%%%%%%%%%%%%%%%%%%%%%%%%%%%%%%%%%%%%%%
%%%%%%%%%%%%%%%%%%%%%%%%%%%%%%%%%%%%%%%%%%%%%%%%%%%%%%%%%%%%%

This appendix contains data needed for Lemmas \ref{ses}, \ref{minpairext} and \ref{nonformal}.

We follow the notation of \cite{hmm} and label the vertices of our Dynkin diagram with the integers $0,1,\ldots,r-1$.

In the tables below, in lieu of writing the root $\a$, we give the corresponding word $\ii_\a$. The first column comprises a positive root $\a$ that is not simple. The second and third columns give a minimal pair $(\b,\ga)$ for $\a$. The remaining columns contain data used in the proof of Lemma \ref{nonformal}.

These tables should be consulted in conjunction with those of \cite[\S 5]{hmm} which contain the characters of the cuspidal representations for the HMM-ordering. In each case, the computation should proceed by induction on the height of the root $\alpha$, since some structure of the self extensions of smaller cuspidal modules is needed as input.

First, for type $B_r$, we label our Dynkin diagram as follows:
 \[
  \includegraphics[scale=1.1]{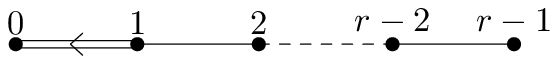}
 \]
\[
 \begin{tabular}{c c|c|c|c|c|c}
$\a$ & & $\ga$ & $\b$ & & $x$ & $\uj$ \\ \hline
  $i\ldots j$ & $0\leq i< j <r$ & $i\ldots j-1$ & $j$ & $\beta$ & $\phi_{j-i}^2$ & $i\ldots j-2,j,j-1$ \\
$00\ldots k$ & & 0 & $0\cdots k$ & $\ga$ & $\phi_1\phi_2^2$ & $0102\ldots k$ \\
$j\cdots 00\cdots k$ & $0<j<k$ & $j$ & $j-1\cdots 00\cdots k$ & $\ga$ & $\phi_1^2$ & $j-1,j,j-2\cdots 00\cdots k$
 \end{tabular}
\]

For type $C_r$, we label our Dynkin diagram as follows:
 \[
 \includegraphics[scale=1.1]{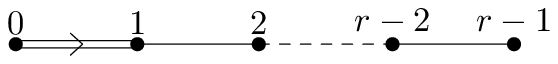}
\]
In the following table of data, there is the restriction that $j\geq 2$ in both the second and fourth rows.
\[
 \begin{tabular}{c c|c|c|c|c|c}% \hline
$\a$ & & $\ga$ & $\b$ & & $x$ & $\uj$ \\ \hline
 01 & & 0 & 1 & $\ga$ & $\phi_1^2$ & 10 \\
$i\cdots j$ &  & $i\cdots j-1$ & $j$ & $\b$ &$\phi_{j-i}^2$ & $i\cdots j-2,j,j-1$ \\
$101\cdots k$ &  & 1 & $01\cdots k$  & $\ga$ &$\phi_1\phi_2\phi_1$ & $011\cdots k$ \\
$j\cdots 101\cdots k$ &  & $j$ &$j-1\cdots 0\cdots k$ & $\ga$ & $\phi_1^2$ & $j-1,j,j-2\cdots 0\cdots k$ \\
$0\cdots j1\cdots j$ &  &$0\cdots j$&$1\cdots j$&$\b$&$\phi_{j+1}\cdots \phi_2\phi_1^2\phi_2\cdots \phi_{j+1}$& $10\cdots j2\cdots j$\\ %\hline
 \end{tabular}
\]

For type $F_4$, we label our Dynkin diagram as follows:
 \[
  \includegraphics[scale=1.1]{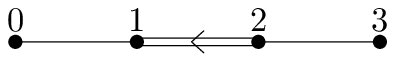}
 \]

Note that the roots labelled 1,2 and 3 form a Dynkin diagram of type $B_3$ with the HMM ordering. So we only need to deal with roots $\a$ for which the letter 0 appears in $\ii_\a$.

\[
\begin{tabular}{  c|c| c|c|c| c }
  $\a$ & $\ga$ & $\b$ & & $x$ & $\uj$ \\ \hline
0123 & 0 & 123  &$\ga$&$\phi_1^2$ & 1023\\  
%112 & 1 & 12 & $\phi_1\phi_2^2$ \\
  1012 & 1 & 012 &$\ga$& $\phi_1\phi_2\phi_3^2\phi_2\phi_1$ & 0121\\
%1123 & 1 & 123 & 112 \\
01012 &  01 & 012 &$\ga$& $\phi_2\phi_3\phi_4^2\phi_3\phi_2$ & 00121\\
%21123  & 2 & 1123 & $\phi_1^2$ \\
10123 & 1 & 0123 &$\ga$& $\phi_1\phi_2\phi_3^2\phi_2\phi_1$ & 01213 \\
010123 & 01 & 0123 &$\ga$& $\phi_2\phi_3\phi_4^2\phi_3\phi_2$ & 001213\\
210123 & 2& 10123 & $\ga$& $\phi_1^2$ & 120123 \\
1210123 & 1 & 210123 & $\ga$&$\phi_1\phi_2\phi_3^2\phi_2\phi_1$ & 2101123 \\
2010123 & 2  & 010123 &$\ga$& $\phi_1\phi_2^2\phi_1$ & 0120123\\
12010123 & 12 & 010123 & $\b$&$\phi_4\phi_3\phi_5\phi_4\phi_2\phi_3^2\phi_2$ & 10120123 \\
112010123 & 1 & 12010123 &$\ga$& $\phi_1\phi_2^2$ & 121010123 \\
2112010123 & 2 & 112010123 & $\ga$& $\phi_1^2$ & 1212010123\\
21012310123 & 210123 & 10123 & $\ga$&$\phi_6\phi_5\phi_4\phi_3\phi_2\phi_1^2 \phi_2\phi_3\phi_4\phi_5\phi_6$ &12101230123 \\
\end{tabular}
\]

% Case $B_n$ (note 0 is short).
% 
% 
% 
% 
% For $\a=[i,\ldots,j]$, we use $[i,\ldots,j-1]\ast [j]$ and the self-ext of $[j]$, which is $\phi_{j-1}^2$ even if $j=1$ and $i=0$ which a priori needs to be dealt with separately.
% 
% For $\a[j,\ldots,0,0,\ldots,k]$, we use $[j]\ast [j-1,\ldots,0,0,\ldots,k]$ and $\phi_1^2$, unless $j=0$ when it is the $[112]$ case of $F_4$.
% 
% Case $C_n$ (note 0 is long).
% 
% 
% 
% For $\a=[i,\ldots j]$, we use $[i,\ldots,j-1]\ast [j]$ and $\phi_{j-1}^2$ unless $j=1$ and $i=0$ where we are in the $B_2$ case, hence solved.
% 
% For $\a=[j,\ldots,1,0,1,l\dots,k]$, we use $[j]\ast [j-1,\ldots,1,0,1,\ldots k]$ and $\phi_1^2$ unless $j=1$ in which case we use $\phi_1\phi_2\phi_1$.
% 
% For $\a=[0,\ldots,j,1,\ldots,j]$, we use $[0,\ldots,j]\ast [1,\ldots,j]$ and the self-extension of the second factor (which is known by homogeneity and simply-lacedness. We use $\phi_{j+1}\cdots\phi_2\phi_1^2\phi_2\cdots\phi_{j+1}$.

For type $G_2$, label the short root with 0 and the long root with 1.
\[
\begin{tabular}{  c|c| c|c|c| c }
  $\a$ & $\ga$ & $\b$ & & $x$ & $\uj$ \\ \hline
  01 & 0 & 1 & $\b$ & $\phi_1^2$& 10 \\
  001 & 0 & 01 & $\ga$ & - & - \\
0001 & 0 & 001 & $\ga$ & $\phi_3^2\phi_2\phi_1$ & 0010  \\
00101 &  001 & 01 & $\ga$ & $\phi_2^2\phi_1\phi_3\phi_4\phi_2\phi_3$ & 01001\\
\end{tabular}
\]

%\bibliographystyle{alpha}
%\bibliography{bibfileklr}

\begin{thebibliography}{Rou12}

\bibitem[B]{bourbaki}
N.~Bourbaki.
\newblock {\em \'{E}l\'ements de math\'ematique. {F}asc. {XXXIV}. {G}roupes et
  alg\`ebres de {L}ie. {C}hapitres {IV}--{VI}}.
%\newblock {\em \'{E}l\'ements de math\'ematique. {F}asc. {XXXIV}. {G}roupes et
%   alg\`ebres de {L}ie. {C}hapitre {IV}: {G}roupes de {C}oxeter et syst\`emes de
%   {T}its. {C}hapitre {V}: {G}roupes engendr\'es par des r\'eflexions.
%   {C}hapitre {VI}: syst\`emes de racines}.
\newblock Actualit\'es Scientifiques et Industrielles, No. 1337. Hermann,
  Paris, 1968.

\bibitem[BKM]{bkm}
J. Brundan, A. Kleshchev and P. J. McNamara.
\newblock Homological properties of finite type Khovanov-Lauda-Rouquier algebras.
\newblock \arxiv{1210.6900}

\bibitem[HMM]{hmm}
David Hill, George Melvin, and Damien Mondragon.
\newblock Representations of quiver {H}ecke algebras via {L}yndon bases.
\newblock {\em J. Pure Appl. Algebra}, 216(5):1052--1079, 2012.
\newblock \arXiv{0912.2067}.

\bibitem[K]{kato}
Syu Kato.
\newblock {PBW} bases and {KLR} algebras.
\newblock \arXiv{1203.5245}.

\bibitem[KL1]{khovanovlauda}
Mikhail Khovanov and Aaron~D. Lauda.
\newblock A diagrammatic approach to categorification of quantum groups. {I}.
\newblock {\em Represent. Theory}, 13:309--347, 2009.
\newblock \arxiv{0804.2080}.


\bibitem[KL2]{klII}
Mikhail Khovanov and Aaron~D. Lauda.
\newblock A diagrammatic approach to categorification of quantum groups {II}.
\newblock {\em Trans. Amer. Math. Soc.}, 363(5):2685--2700, 2011.
\newblock \arxiv{0807.3250}.

\bibitem[KR]{kleshchevram}
Alexander Kleshchev and Arun Ram.
\newblock Representations of {K}hovanov-{L}auda-{R}ouquier algebras and
  combinatorics of {L}yndon words.
\newblock {\em Math. Ann.}, 349(4):943--975, 2011.
\newblock \arxiv{0909.1984}.

\bibitem[LV]{laudavazirani}
Aaron~D. Lauda and Monica Vazirani.
\newblock Crystals from categorified quantum groups.
\newblock {\em Adv. Math.}, 228(2):803--861, 2011.
\newblock \arxiv{0909.1810}.

\bibitem[LS]{ls}
Serge Levendorski{\u\i} and Yan Soibelman.
\newblock Algebras of functions on compact quantum groups, {S}chubert cells and
  quantum tori.
\newblock {\em Comm. Math. Phys.}, 139(1):141--170, 1991.

\bibitem[L]{lusztigbook}
George Lusztig.
\newblock {\em Introduction to quantum groups}, volume 110 of {\em Progress in
  Mathematics}.
\newblock Birkh\"auser Boston Inc., Boston, MA, 1993.

\bibitem[R1]{rouquier}
R.~Rouquier.
\newblock 2-{K}ac-{M}oody algebras.
\newblock \arxiv{0812.5023}.

\bibitem[R2]{rouquier2}
Rapha\"el Rouquier.
\newblock {Quiver Hecke algebras and 2-Lie algebras.}
\newblock {\em Algebra Colloq.}, 19(2):359--410, 2012.
\newblock \arxiv{1112.3619}.

\bibitem[TW]{tingleywebster}
Peter Tingley and Ben Webster.
\newblock Mirkovic-{V}ilonen polytopes and {K}hovanov-{L}auda-{R}ouquier
  algebras.
\newblock \arxiv{1210.6921}.

\bibitem[VV]{vv}
M.~Varagnolo and E.~Vasserot.
\newblock Canonical bases and {KLR}-algebras.
\newblock {\em J. Reine Angew. Math.}, 659:67--100, 2011.
\newblock \arxiv{0901.3992}.

\bibitem[W]{weibel}
Charles~A. Weibel.
\newblock {\em An introduction to homological algebra}, volume~38 of {\em
  Cambridge Studies in Advanced Mathematics}.
\newblock Cambridge University Press, Cambridge, 1994.

\end{thebibliography}

\end{document}